\documentclass[11pt,a4paper]{amsart}

\usepackage{amssymb, amstext, amscd, amsmath, color}

\usepackage{hyperref}

\usepackage{url}
\usepackage{tikz}
\usepackage{epstopdf}
\usepackage{amsfonts}
\usepackage{amsthm}
\usepackage{amsfonts}
\usepackage{array}
\usepackage{epsfig}
\usepackage{eucal}
\usepackage{latexsym}
\usepackage{mathrsfs}
\usepackage{textcomp}
\usepackage{verbatim}
\usepackage{setspace}
\usepackage{bm}

\usepackage{algorithm}
\usepackage{algpseudocode}

\newtheorem{thm}{Theorem}[section]

\newtheorem{cor}[thm]{Corollary}

\newtheorem{defn}[thm]{Definition}
\newtheorem{example}[thm]{Example}

\newtheorem{lem}[thm]{Lemma}

\newtheorem{prop}[thm]{Proposition}
\newtheorem{rem}[thm]{Remark}

\numberwithin{equation}{section}

%
%      Blackboard bold letters

%
%      Capital script letters

  \newcommand{\U}{{\mathcal{U}}}

%
% Fraktur letters
\newcommand{\be}{{\bf{e}}}

%

% Misc notation

\newcommand{\rank}{\operatorname{rank}}
\usepackage{blkarray}

% "WIDE STYLE"
\textwidth   16.1cm \textheight  22.3cm \topmargin  -0.4in
\oddsidemargin  -0.05in \evensidemargin  -0.05in

\begin{document}

\title[Mobility of geometric constraint systems with extrusion symmetry]{Mobility of geometric constraint systems with extrusion symmetry}
\author[J. Owen]{J. Owen}
\address{Uplands, Bentley IP9 2DA, U.K.}
\email{john.owen.k33@icloud.com}
\author[B. Schulze]{B. Schulze}
\address{ Dept.\ Math.\ Stats.\\ Lancaster University\\
Lancaster LA1 4YF \\U.K.}
\email{b.schulze@lancaster.ac.uk}
\thanks{2010 {\it  Mathematics Subject Classification.}
52C25, 70B99, 20C35\\
Key words and phrases: bar-joint framework, point-hyperplane framework, symmetry, infinitesimal rigidity, finite motion, mechanism}

\begin{abstract}
If we take a (bar-joint) framework, prepare an identical copy of this framework, translate it by some vector $\tau$, and finally join corresponding points of the two copies, then we obtain a  framework with `extrusion' symmetry in the direction of $\tau$. This process may be repeated $t$ times to obtain a framework whose underlying graph has $\mathbb{Z}_2^t$ as a subgroup of its automorphism group and which has `$t$-fold extrusion' symmetry. Extruding a framework is a widely used technique in CAD for generating a 3D model from an initial 2D sketch, and hence it is important to understand the flexibility of extrusion-symmetric frameworks. Using group representation theory, we show that while $t$-fold extrusion symmetry is not a point-group symmetry, the rigidity matrix of a framework with $t$-fold extrusion symmetry can still be transformed into a block-decomposed form  in the analogous way as for point-group symmetric frameworks. This allows us  to establish Fowler-Guest-type character counts to analyse the mobility of such frameworks. We show that this entire theory also extends to the more general point-hyperplane frameworks with $t$-fold extrusion symmetry. Moreover, we  show that under suitable regularity conditions the infinitesimal flexes we detect with our symmetry-adapted counts extend to finite (continuous) motions. Finally, we establish an algorithm that checks for finite motions via linearly displacing framework points along velocity vectors of  infinitesimal motions.
\end{abstract}

\date{}
\maketitle
\section{Introduction}\label{sec:intro0}
A  (bar-joint) framework is a finite simple graph embedded into Euclidean $d$-space, with  edges interpreted as stiff bars of fixed lengths and vertices as  universal joints that allow arbitrary rotations in the space. The rigidity and flexibility analysis of  frameworks and related geometric constraint systems is a major research area in applied discrete geometry (see \cite[Chapters 61--63]{SW1} and \cite{conguest,WW}, for example).  Over the last 15 years or so, there has been significant interest in studying the rigidity and flexibility of  geometric constraint systems that possess non-trivial point group symmetries. Since symmetry is a common feature of both man-made and natural structures, research problems in this area are often motivated by practical applications in science, technology, and design. We refer the reader to \cite[Chapter 62]{SW1} for a summary of results.

In this paper, we introduce a new type of symmetry, called \emph{extrusion symmetry}, which is different from point group symmetry. A basic example is shown in Figure~\ref{fig:ex1}. The idea is to start with a framework $(H,p)$ (a triangle in $\mathbb{R}^2$ in the case of Figure~\ref{fig:ex1}(a)), to create an identical copy of $(H,p)$, translate this copy along a vector $\tau$, and  join corresponding vertices by edges (of equal lengths). This results in the extruded framework $(G,q)$ (the triangular prism framework in the case of Figure~\ref{fig:ex1}(a)). The underlying graph $G$ of this framework is symmetric, since  the automorphism group of $G$ contains a subgroup that is isomorphic to $\mathbb{Z}_2$. However, in general, the framework $(G,q)$ does not have any point group symmetry. The above process of extruding a framework can be repeated mulitple, say $t$, times to obtain a framework with $t$-fold extrusion symmetry.

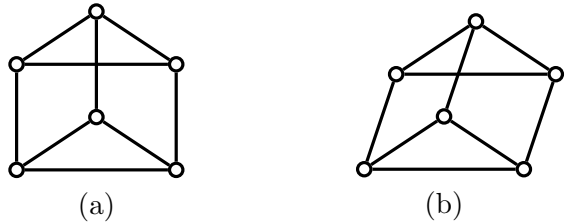
\begin{figure}[htp]
\begin{center}
\begin{tikzpicture}[very thick,scale=0.7]
\tikzstyle{every node}=[circle, draw=black, fill=white, inner sep=0pt, minimum width=5pt];
        \path (0,0) node (p1) {} ;
        \path (3,0) node (p2) {} ;
        \path (1.5,1) node (p3) {} ;
        
         \path (0,2) node (p11) {} ;
        \path (3,2) node (p22) {} ;
        \path (1.5,3) node (p33) {} ;
        
        \draw (p1)  --  (p2);
         \draw (p1)  --  (p3);
        \draw (p3)  --  (p2);
        \draw (p11)  --  (p22);
         \draw (p11)  --  (p33);
        \draw (p33)  --  (p22);
        \draw (p1)  --  (p11);
         \draw (p2)  --  (p22);
        \draw (p3)  --  (p33);
        
        \node [draw=white, fill=white] (a) at (1.5,-0.7) {(a)};
      \end{tikzpicture}
      \hspace{2cm}
      \begin{tikzpicture}[very thick,scale=0.7]
\tikzstyle{every node}=[circle, draw=black, fill=white, inner sep=0pt, minimum width=5pt];
        \path (0,0) node (p1) {} ;
        \path (3,0) node (p2) {} ;
        \path (1.5,1) node (p3) {} ;
        
         \path (0.6,1.8) node (p11) {} ;
        \path (3.6,1.8) node (p22) {} ;
        \path (2.1,2.8) node (p33) {} ;
        
        \draw (p1)  --  (p2);
         \draw (p1)  --  (p3);
        \draw (p3)  --  (p2);
        \draw (p11)  --  (p22);
         \draw (p11)  --  (p33);
        \draw (p33)  --  (p22);
        \draw (p1)  --  (p11);
         \draw (p2)  --  (p22);
        \draw (p3)  --  (p33);
        
        \node [draw=white, fill=white] (a) at (1.5,-0.7) {(b)};
      \end{tikzpicture}
\end{center}
\vspace{-0.3cm}
\caption{(a) A realisation of the triangular prism graph as a bar-joint framework in $\mathbb{R}^2$ with  extrusion symmetry. This framework is continuously flexible, as illustrated in (b). }
\label{fig:ex1}
\end{figure}

This notion of extrusion symmetry is motivated by applications in Computer-Aided Design. The 3D version of extruding a framework is a  widely used technique in CAD for generating a 3D model from an initial 2D sketch, and hence it is important to understand the flexibility of extruded frameworks. (The framework in Figure~\ref{fig:ex1}(a) can be viewed as an extruded triangle where the extrusion direction is in the plane of the triangle.) Since CAD structures in 3D often include both points and planes, as well as a mix of geometric constraints between these objects (point-point distance constraints, point-plane distance constraints, and plane-plane angle constraints) \cite{FGO}, we will develop our mobility analysis of extrusion-symmetric frameworks for both bar-joint frameworks and the more general point-hyperplane frameworks \cite{EJNSTW} in Euclidean $d$-space  (for an arbitrary $d$).

By treating the extrusion as a symmetry we can exploit the methods that have proved useful for analysing frameworks with point group symmetry. In particular, using group representation theory,  the rigidity matrices of the extruded frameworks can be transformed into a block-diagonalised form, where each block matrix corresponds  to an irreducible representation of the group.  Based on this block-decomposition, methods from character theory may be applied to derive symmetry-adapted counting rules for detecting both infinitesimal motions and self-stresses in extrusion-symmetric frameworks that are not detectable via the standard count. Although in many cases it is also possible to identify the infinitesimal (and finite) flexibility of an extruded framework using a direct parametrisation of the extrusion directions \cite{FGO},  the symmetry-adapted counting rules also provide  information on self-stresses of the framework. Note that  for applications in CAD and geometric constraint solving, for example, it is important to identify which constraints are  dependent. 

Another key aspect of the block-decomposition is that each block matrix is a symmetry-adapted rigidity matrix of a much smaller size than the standard non-symmetric rgidity matrix. This reduction in the size of the constraint system provides a signifcant computational advantage for analysing extruded frameworks.  

Finally, the results we establish in Section~\ref{sec:finiteflex} for detecting finite flexes  in frameworks which have symmetry has led us to develop a numerical algorithm which detects both mobility and local redundancy in a variety (but not all) frameworks including those with point group symmetry (see Section~\ref{sec:linpush}). The identification of constraints that are  locally redundant, in the sense of Definition~\ref{localred}, is a crucial problem in CAD, especially when the framework is under-constrained,  so this information can be conveyed to a designer. The fact that extrusion generates a framework with symmetry shows that the flexibility and local redundancy for extruded frameworks is also detected by our numerical algorithm.

The paper is organised as follows. In Sections~\ref{sec:intro1} and \ref{sec:general} we first establish the basic definitions of $t$-fold extrusion-symmetric bar-joint and point-hyperplane frameworks, respectively. Moreover, we show in these sections that the rigidity matrix of a $t$-fold extrusion-symmetric bar-joint framework (point-hyperplane framework, respectively) can be transformed into a block-diagonal form, with the caveat that if a point-hyperplane framework contains a hyperplane that lies along an extrusion direction, then the framework may have to be suitably pinned for the block-diagonalisation to apply. These results are obtained by adapting the method in   \cite{BS2} and showing that  the corresponding rigidity matrices intertwine two particular representations of the group describing the extrusion symmetry.  The block-diagonalisation of the rigidity matrix %which is implied by the extrusion symmetry 
is useful in numerical algorithms which require the inversion of the rigidity matrix since the size of each block is typically less than about half the size of the original rigidity matrix.

In Section~\ref{sec:fowler-guest} we then show how the block-decomposition of the rigidity matrix of a $t$-fold extrusion symmetric framework can be exploited to derive Fowler-Guest type character counts (see \cite{FGsymmax,OP,BS2}, for example) that give insights into the (infinitesimal) rigidity and the self-stresses of the framework. 
For the sake of clarity of the presentation, we first develop this theory for bar-joint frameworks and then generalise it to point-hyperplane frameworks. Throughout the section, we illustrate our results via some basic examples.

Section~\ref{sec:general} and Subsection~\ref{sec:countph} which describe the application of our theory to point-hyperplane frameworks are considerably more complicated than the sections which deal only with bar-joint frameworks. The reader who is interested only in bar-joint frameworks could skip these parts. However,  we have found that configurations which complicate the use of symmetry with hyperplanes,  such as when a self-symmetric hyperplane contains an extrusion direction,  do occur frequently in practical applications so this aspect of the theory is important.

Finally,  in Section~\ref{sec:finiteflex} we show that the methods which have been developed to detect \emph{finite} mobility in frameworks with point group symmetry \cite{BSfinite} may also be used to give a sufficient condition for finite  mobility in frameworks which do not have an obvious symmetry.  We 
give an Asimov-Roth type result \cite{AR} to detect finite flexbility in bar-joint or point-hyperplane frameworks whose configurations of points and hyperplanes are `regular points' of the  measurement map, restricted to a smooth manifold (such as the manifold of $t$-fold extrusion-symmetric configurations).  Based on the idea of displacing a `regular'  framework  by a linear push along a smooth manifold  
to obtain a finite motion,  in Section~\ref{sec:linpush} we give a numerical algorithm for detecting finite flexibility in a point-hyperplane framework even in the absence of any known symmetry. The algorithm requires only that we can compute the null-space of the rigidity matrix at an arbitrary point in the configuration space.  Since  there is a correspondence between finite motions and  redundant constraints,  this algorithm is also useful for detecting locally redundant constraints in systems of geometric constraint equations.

\section{Bar-joint frameworks}\label{sec:intro1}
\subsection{Rigidity of bar-joint frameworks}\label{sec:intro}

A $d$-dimensional \emph{(bar-joint) framework} $(G,p)$ consists of a simple graph $G=(V,E)$ and a map $p:V\to \mathbb{R}^d$.
 We  think of a framework as  a straight-line realisation of  $G$, where each edge of $G$ models a rigid bar of fixed length and each vertex of $G$ models a joint in Euclidean $d$-space that allows bending in any direction. It is a classical topic in discrete geometry (as well as in diverse areas of application, such as engineering, CAD, robotics or molecular dynamics) to study the rigidity and flexibility of frameworks, in the following sense.
 
Two frameworks $(G, p)$ and $(G, q)$ are said to be \emph{equivalent} if
$\|p(i) - p(j)\| = \|q(i) - q(j)\|$ for all $\{i, j\}\in E$.
Frameworks $(G, p)$ and $(G, q)$ are said to be \emph{congruent} if $p$ and $q$ are congruent.
A \emph{finite motion} of a framework $(G,p)$ is a continuous one-parameter family of frameworks
$(G, p_{T})$ with $p_0 = p$ and $(G, p_{T})$ equivalent to $(G, p)$ for all $T\in [0,1)$. A finite motion is \emph{trivial} if all $(G, p_{T})$ are congruent to $(G, p)$. A non-trivial finite motion is also called a \emph{finite flex}. A framework is \emph{flexible} if it has a finite flex, and \emph{rigid} otherwise. Since it is in general difficult to analyse a framework for rigidity, it is common to linearise the problem by differentiating the edge constraints, which leads to the concept of infinitesimal rigidity (which is equivalent to static rigidity). We briefly introduce the key definitions (see \cite{SW1,WW}, for example, for details).

An \emph{infinitesimal motion} of a framework $(G,p)$ in $\mathbb{R}^d$
is a function $\dot p: V\to \mathbb{R}^{d}$ such that
\begin{equation}
\label{infinmotioneq}
\langle p_i-p_j, \dot p_i-\dot p_j\rangle =0 \quad\textrm{ for all } \{i,j\} \in E\textrm{,}
\end{equation}
where $p_i=p(i)$ and $\dot p_i=\dot p(i)$ for each $i$.

An infinitesimal motion $\dot p$ of $(G,p)$ is a \emph{trivial infinitesimal motion}
if there exists a skew-symmetric matrix $S$
and a vector $b$ such that $\dot p_i=Sp_i+b$ for all $i\in V$.
Otherwise $\dot p$ is called an \emph{infinitesimal flex} (or \emph{non-trivial infinitesimal motion}) of $(G,p)$.
 $(G,p)$ is \emph{infinitesimally rigid} if every infinitesimal motion of $(G,p)$ is trivial.
Otherwise $(G,p)$ is said to be \emph{infinitesimally flexible}.

The matrix corresponding to the linear system in (\ref{infinmotioneq}) is known as the \emph{rigidity matrix} of $(G,p)$, and is denoted by $R(G,p)$. In other words, $R(G,p)$ is the $|E|\times d|V|$ matrix whose rows correspond to the edges of $G$, and the entries in row $e = \{i, j\}$ are zero except possibly in the column $d$-tuples for $p_i$ and $p_j$,
where the entries are the coordinates of $p_i-p_j$ and $p_j - p_i$, respectively.

It is well known that if $p$ affinely spans at least $\mathbb{R}^{d-1}$, then the space of trivial infinitesimal motions of $(G,p)$ has dimension $\binom{d+1}{2}$, and hence $(G,p)$ is infinitesimally rigid if and only if $\textrm{rank } R(G,p)=d|V|-\binom{d+1}{2}$. Thus, infinitesimal rigidity of a framework $(G,p)$ can easily be checked by computing the rank of $R(G,p)$.

If a framework is infinitesimally rigid then it is rigid. While the converse is not true in general, it was shown in \cite{AR} that if $p$ is a \emph{regular} point, i.e. $\textrm{rank } R(G,p)= \textrm{max}\{R(G,q)|\, q\in \mathbb{R}^{d|V|}\}$, then the existence of an infinitesimal flex implies the existence of a finite flex. Thus, infinitesimal rigidity and rigidity are in fact equivalent for all regular (and hence almost all) $d$-dimensional realisations $(G,p)$ of a graph $G$ as a bar-joint framework.

%%%%%%%%%%%%%%%
\subsection{Bar-joint frameworks with extrusion symmetry}\label{subsec:defex}

We denote graphs by $G = (V, E)$, where $V$ is the set of vertices and $E$ is the set of edges. In cases where $G$ is not clear from the context, we write $V(G)$ for $V$ and $E(G)$ for $E$. The complete graph on $n$ vertices is denoted by $K_n$. The automorphism group of a graph $G$ is denoted by $\textrm{Aut}(G)$.

The \emph{Cartesian product}  of the graphs $G_1$ and $G_2$ is the graph $G=G_1\square G_2$ with $V(G)=V(G_1)\times V(G_2)$, where two vertices $(u_1,v_1)$ and $(u_2,v_2)$ are adjacent in $G$ if and only if either $u_1=u_2$ and $\{v_1,v_2\}\in E(G_2)$, or $v_1=v_2$ and $\{u_1,u_2\}\in E(G_1)$. It is well known that the Cartesian product is associative and commutative (as an operation on isomorphism classes of graphs). For the $t$-fold Cartesian product of a graph $G$ with itself we use the notation $G^{\square t}$.

Let $H$ be a graph and let $G=H\square K_2^{\square t}$ for some fixed $t\geq 1$. Let $V(H)=\{v_1,\ldots, v_k\}$ and let $V(K_2)=\{0,1\}$. Then each vertex of $G$ is of the form $(v_j,\be)$, where $j\in \{1,\ldots, k\}$ and $\be\in \{0,1\}^t$. 

Now, let $\mathbb{Z}_2=\{0,1\}$ be the additive group of order 2 and $\mathbb{Z}_2^t$ be the $t$-fold direct product  of $\mathbb{Z}_2$ with itself. So any element of $\mathbb{Z}_2^t$ is of the form $\gamma=(x_1,\ldots, x_t)$ with $x_i\in \{0,1\}$ for each $i$. Then, for $G=H\square K_2^{\square t}$, there clearly exists a group homomorphism $\theta: \mathbb{Z}_2^t\to \textrm{Aut}(G)$ given by 
$\theta(\gamma)=\alpha_{\gamma}$, where $$\alpha_{\gamma}(v_j,\be)=(v_j,\be+\gamma)$$ 
for all $(v_j,\be)\in V(G)$ (with the addition in $\be+\gamma$ taken modulo $2$). 
We call the group action $\theta$ the \emph{extrusion action} on $G$.

\begin{defn}\label{def:extrdef}
For $G=H\square K_2^{\square t}$ with extrusion action $\theta$, a bar-joint framework $(G,p)$ in $\mathbb{R}^d$ is said to have \emph{$t$-fold extrusion symmetry} (or simply \emph{extrusion symmetry} if $t=1$) if
 there exist $\tau_1,\ldots, \tau_t \in \mathbb{R}^d\setminus \{0\}$ such that 
 for every vertex $(v_j,\be)$ of $G$, and every $\gamma\in \mathbb{Z}_2^t$, we have 
 $$p_{\theta(\gamma)((v_j,\be))}=p_{(v_j,\be)}+ \sum_{i\in X} \tau_i - \sum_{g\in Y} \tau_g,$$ where $X$ is the set of indices in $\{1,\ldots, t\}$ for which $\be$ has an entry of $0$ and $\gamma$ has an entry of $1$, and  $Y$ is the set of indices in $\{1,\ldots, t\}$ for which both $\be$ and $\gamma$ have an entry of $1$. The vector  $\tau_{\gamma}((v_j,\bm e))=\sum_{i\in X} \tau_i - \sum_{g\in Y} \tau_g$ is called the \emph{extrusion direction of $(v_j,\be)$ induced by $\gamma$}.
 \end{defn}

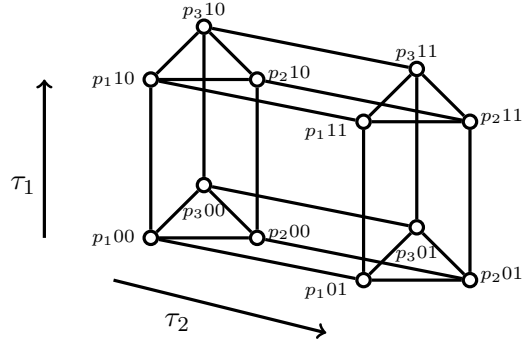
\begin{figure}[htp]
\begin{center}
\begin{tikzpicture}[very thick,scale=0.7]
\tikzstyle{every node}=[circle, draw=black, fill=white, inner sep=0pt, minimum width=5pt];
        \node [draw=white, fill=white] (a) at (-0.7,0) {\tiny $p_100$};
        \node [draw=white, fill=white] (a) at (-0.7,3) {\tiny $p_110$};
        \node [draw=white, fill=white] (a) at (2.6,0.1) {\tiny $p_200$};
        \node [draw=white, fill=white] (a) at (2.6,3.1) {\tiny $p_210$};
           \node [draw=white, fill=white] (a) at (1,0.5) {\tiny $p_300$};
        \node [draw=white, fill=white] (a) at (1,4.3) {\tiny $p_310$};
        
           \node [draw=white, fill=white] (a) at (3.3,-1) {\tiny $p_101$};
        \node [draw=white, fill=white] (a) at (3.3,2) {\tiny $p_111$};
        \node [draw=white, fill=white] (a) at (6.6,-0.7) {\tiny $p_201$};
        \node [draw=white, fill=white] (a) at (6.6,2.3) {\tiny $p_211$};
           \node [draw=white, fill=white] (a) at (5,-0.3) {\tiny $p_301$};
        \node [draw=white, fill=white] (a) at (5,3.5) {\tiny $p_311$};

        \path (0,0) node (p1) {} ;
        \path (2,0) node (p2) {} ;
        \path (1,1) node (p3) {} ;
        
         \path (0,3) node (p11) {} ;
        \path (2,3) node (p22) {} ;
        \path (1,4) node (p33) {} ;
        
        \draw (p1)  --  (p2);
         \draw (p1)  --  (p3);
        \draw (p3)  --  (p2);
        \draw (p11)  --  (p22);
         \draw (p11)  --  (p33);
        \draw (p33)  --  (p22);
        \draw (p1)  --  (p11);
         \draw (p2)  --  (p22);
        \draw (p3)  --  (p33);
        
                \path (4,-0.8) node (p1r) {} ;
        \path (6,-0.8) node (p2r) {} ;
        \path (5,0.2) node (p3r) {} ;
        
         \path (4,2.2) node (p11r) {} ;
        \path (6,2.2) node (p22r) {} ;
        \path (5,3.2) node (p33r) {} ;
        
        \draw (p1r)  --  (p2r);
         \draw (p1r)  --  (p3r);
        \draw (p3r)  --  (p2r);
        \draw (p11r)  --  (p22r);
         \draw (p11r)  --  (p33r);
        \draw (p33r)  --  (p22r);
        \draw (p1r)  --  (p11r);
         \draw (p2r)  --  (p22r);
        \draw (p3r)  --  (p33r);
        
        \draw (p1r)  --  (p1);
         \draw (p3r)  --  (p3);
        \draw (p2r)  --  (p2);
        \draw (p11r)  --  (p11);
         \draw (p33)  --  (p33r);
        \draw (p22)  --  (p22r);
       
        \draw[->](-2,0)--(-2,3);
         \node [draw=white, fill=white] (a) at (-2.4,1) {$\tau_1$};
        \draw[->](-0.7,-0.8)--(3.3,-1.8);
        \node [draw=white, fill=white] (a) at (0.5,-1.6) {$\tau_2$};
       % \node [draw=white, fill=white] (a) at (1,-0.7) {(a)};
      \end{tikzpicture}
     \end{center}
\caption{A realisation of the graph $K_3\square K_2^{\square 2}$ as a framework with $2$-fold extrusion symmetry in $\mathbb{R}^2$, where the point $p((v_j,x,y))$ is denoted by $p_jxy$. The extrusion directions of $(G,p)$ are $\tau_1$ and $\tau_2$.}
\label{fig:ex2}
\end{figure}

Note that if $(G,p)$ has $t$-fold extrusion symmetry, then  we have  $$p(v_j,\be)=p(v_j,\mathbf{0})+ \mathbf{T} \cdot \be^T,$$
where  $\mathbf{T}$ is the $d\times t$ matrix whose $i$-th column is $\tau_i$, and $\be^T$ is the $t$-dimensional column vector corresponding to the $0$-$1$ word $\be$. The vectors $\tau_1,\ldots ,\tau_t$ are called the \emph{extrusion directions} of $(G,p)$.

An example of a $2$-dimensional framework with $2$-fold extrusion symmetry is shown in Figure~\ref{fig:ex2}.

\subsection{Block-decomposing the rigidity matrix}\label{subsec:blockdecom}

In this section we show that the rigidity matrix of a  framework with $t$-fold extrusion symmetry  can be transformed into a block-diagonalised form using techniques from group representation theory. This block-decomposition is obtained in a similar way as the one for frameworks with point group symmetry, as described in \cite{KG2,BS2,OP}.
We need the following basic definitions.

For a group $\Gamma$ and a linear space $X$, a homomorphism $\rho:\Gamma\to GL(X)$ is called a \emph{(linear) representation} of $\Gamma$. The space $X$ is called the \emph{representation space} of $\rho$, and two linear representations are considered equivalent if they are similar.

For a linear representation $\rho:\Gamma\to GL(X)$, a subspace $U\subseteq X$ is called a \emph{$\rho$-invariant subspace} if  $\rho(\gamma)(U)=U$ for all $\gamma\in \Gamma$.
A linear representation that has no proper non-zero $\rho$-invariant subspaces is called an \emph{irreducible representation}.

Given two linear representations, $\rho_1:\Gamma\to GL(X)$ and   $\rho_2:\Gamma\to GL(Y)$ with representation spaces $X$ and $Y$, a linear map  $T:X \to Y$ is said to be a \emph{$\Gamma$-linear map of $\rho_1$ and $\rho_2$} if $T\circ \rho_1(\gamma)=\rho_2(\gamma)\circ T$ for all $\gamma\in \Gamma$. The vector space of all $\Gamma$-linear maps of $\rho_1$ and $\rho_2$ is denoted by ${\rm Hom}_{\Gamma}(\rho_1,\rho_2)$.

In the following we will define two particular linear representations (called the `internal' and `external' representation) of the group $\mathbb{Z}_2^t$ of a framework $(G,p)$ with $t$-fold extrusion symmetry,  and we will show that the rigidity matrix of $(G,p)$ is a $\mathbb{Z}_2^t$-linear map of the internal and external representation (see Theorem~\ref{thm:block}).

Let $G=H\square K_2^{\square t}=(V,E)$ and let $(G,p)$ be a framework in $\mathbb{R}^d$ with $t$-fold extrusion symmetry.
We let $\Gamma=\mathbb{Z}_2^t$, and for simplicity we denote $\theta(\gamma)(i)$ by $\gamma i$ for $\gamma\in \Gamma$ and $i\in V$. Let $P_V:\Gamma\to GL(\mathbb{R}^{|V|})$ be the linear representation of $\Gamma$ defined by $P_V(\gamma)=[\delta_{i,\gamma j}]_{i,j},$ where $\delta$ denotes the Kronecker delta symbol. That is, $P_V(\gamma)$ is the permutation matrix of the permutation of $V$ induced by $\theta(\gamma)$.  Similarly, we let $P_E:\Gamma\to GL(\mathbb{R}^{|E|})$ be the linear representation of $\Gamma$ consisting of the permutation matrices of the permutations of $E$ induced by $\theta(\gamma)$.

The \emph{internal representation} of $\Gamma$ (with respect to $G=H\square K_2^{\square t}$) is the linear representation $P'_E:\Gamma\to GL(\mathbb{R}^{|E|})$, which is obtained from $P_E$ as follows. Let $\gamma\in \Gamma$, and let $e=\{i,j\}$ be an edge of $G$ with $i$ and $j$ of the form $i=(v,\bm e)$ and $j=(v,\bm e')$, with $v\in V(H)$ and $\be, \bm e'\in \{0,1\}^t$. Moreover, let $\gamma e=f$. Note that $\be$ and $\bm e'$ must differ in exactly one entry, say the $h$-th entry. We replace the $1$ in the $(e,f)$-th and the $(f,e)$-th entry of $P_E(\gamma)$ by $-1$ if and only if $\gamma$ has a $1$ in the $h$-th entry. We obtain $P'_E(\gamma)$ by applying this procedure to every edge of $G$ of the form $\{i,j\}$, $i=(v,\bm e)$ and $j=(v,\bm e')$.

Note that for $\gamma\neq \textrm{id}=(0,\ldots, 0)$, an edge $e=\{i,j\}$ of $G$ makes a contribution of $1$ to the diagonal of $P_E(\gamma)$ if and only if $e$ is \emph{fixed} by $\gamma$, i.e. if $\gamma i=j$ and $\gamma j=i$. Moreover, $e=\{i,j\}$ is fixed by $\gamma$ if and only if $i$ and $j$ are of the form $i=(v,\bm e)$ and $j=(v,\bm e')$, with $v\in V(H)$ and $\be, \bm e'\in \{0,1\}^t$ differing only in the $h$-th entry, and $\gamma$ has a $1$ in the $h$-th entry and zeros everywhere else. So in particular, for each $\gamma\in \Gamma$, $\gamma\neq \textrm{id}$,  each $1$ in the diagonal of the matrix $P_E(\gamma)$ is replaced by a $-1$ in $P'_E$. It is straightforward to check that $P'_E$ is indeed a linear representation.

The \emph{external representation} of $\Gamma$ (with respect to $G=H\square K_2^{\square t}$) is the linear representation $$P_V\otimes I_d:\Gamma\to GL(\mathbb{R}^{d|V|}),$$ i.e., for any $\gamma\in \Gamma$, the matrix $(P_V\otimes I_d)(\gamma)$ is the Kronecker product of the permutation matrix $P_V(\gamma)$ and the $d \times d$ identity matrix $I_d$. So $(P_V\otimes I_d)(\gamma)$ is obtained from $P_V(\gamma)$ be replacing each zero with the $d\times d$ zero matrix and each $1$ with $I_d$.

\begin{thm}
\label{thm:block}
Let $G=H\square K_2^{\square t}$ and $(G,p)$ be  a framework in $\mathbb{R}^d$ with $t$-fold extrusion symmetry (and extrusion directions $\tau_1,\ldots, \tau_t$) and let $\theta:\Gamma\to \textrm{Aut}(G)$ be the extrusion action. 
Then $$R(G,p)\in {\rm Hom}_{\Gamma}(P_V\otimes I_d,P'_E).$$
\end{thm}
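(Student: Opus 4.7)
The plan is to check directly from the definitions that for every $\gamma \in \Gamma$ the matrix identity
$$R(G,p)\,(P_V\otimes I_d)(\gamma) \;=\; P'_E(\gamma)\, R(G,p)$$
holds, working at the level of the natural $d$-blocks of $R(G,p)$. I would view the columns of $R(G,p)$ as grouped into $d$-blocks indexed by the vertices of $G$. Right-multiplication by $(P_V\otimes I_d)(\gamma)$ then permutes these vertex-blocks according to $P_V(\gamma)$, so the $d$-block in row $e$ and vertex-block $v$ of the left-hand side is exactly the original block $R(G,p)[e,\gamma v]$. On the right-hand side, left-multiplication by $P'_E(\gamma)$ replaces row $e$ by $\varepsilon(e,\gamma)\cdot(\text{row }\gamma e)$, where $\varepsilon(e,\gamma)\in\{+1,-1\}$ is the sign dictated by the construction of $P'_E$: explicitly, $\varepsilon(e,\gamma)=-1$ precisely when $e=\{(v,\bm e),(v,\bm e')\}$ is a \emph{$K_2$-edge} whose two endpoints differ in position $h$ and $\gamma$ has a $1$ in position $h$, and $\varepsilon(e,\gamma)=+1$ in all other cases.

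With this setup, the claim reduces to the block identity $R(G,p)[e,\gamma v]=\varepsilon(e,\gamma)\,R(G,p)[\gamma e,v]$ for every edge $e=\{i,j\}$, every $\gamma\in\Gamma$, and every vertex $v$. Since both sides vanish unless $v\in\{\gamma i,\gamma j\}$, this further reduces, in the non-trivial cases $v=\gamma i$ and $v=\gamma j$, to the single vector identity
$$p_i-p_j \;=\; \varepsilon(e,\gamma)\,(p_{\gamma i}-p_{\gamma j}).$$

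I would then verify this last identity by splitting into the two natural edge types of $G=H\square K_2^{\square t}$. For an \emph{$H$-edge} $e=\{(u,\bm e),(w,\bm e)\}$ the sign rule gives $\varepsilon(e,\gamma)=+1$, and applying the extrusion formula $p_{(x,\bm f)}=p_{(x,\bm 0)}+\mathbf{T}\bm f^T$ from Definition~\ref{def:extrdef} shows that the translational contributions cancel, leaving $p_i-p_j=p_{(u,\bm 0)}-p_{(w,\bm 0)}=p_{\gamma i}-p_{\gamma j}$. For a \emph{$K_2$-edge} $e=\{(v,\bm e),(v,\bm e')\}$ whose endpoints differ in position $h$, the extrusion formula gives $p_i-p_j=\pm\tau_h$, while a short computation using the mod-$2$ addition $\bm e+\gamma$ and $\bm e'+\gamma$ shows $p_{\gamma i}-p_{\gamma j}=\pm\tau_h$ with the same sign when $\gamma_h=0$ and with the opposite sign when $\gamma_h=1$, matching exactly the sign rule prescribed by $\varepsilon(e,\gamma)$.

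The hard part will be the bookkeeping for the $K_2$-edges: one has to unpack the combinatorial recipe for $P'_E$ and confirm that the sign flip it introduces when $\gamma_h=1$ is matched, edge by edge, by the reversal of the extrusion vector attached to $e$ under the action of $\gamma$. Once this matching is checked, the remainder of the argument is a routine block-by-block verification on $R(G,p)$.
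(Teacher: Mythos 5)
Your proposal is correct and follows essentially the same route as the paper: the paper verifies the intertwining relation by applying both sides to an arbitrary velocity vector $\dot p$ and checking each edge-component, which reduces to exactly your vector identity $p_i-p_j=\varepsilon(e,\gamma)(p_{\gamma i}-p_{\gamma j})$ (the paper's computation $p_q-p_l=p_i-p_j+2\tau_h=p_j-p_i$ for a $K_2$-edge with $\gamma_h=1$, and $p_q-p_l=p_i-p_j$ otherwise). Your block-level formulation of the same case analysis is a purely presentational difference.
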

\begin{proof} Suppose we have $R(G,p)\dot p=s$. Then we need to show that $R(G,p)(P_V\otimes I_d)(\gamma) \dot p = P'_E(\gamma)s$.

Fix $\gamma\in \Gamma$ and let $e=\{i,j\}\in E$. Suppose that $\gamma i =q$ and $\gamma j =l$, and hence $P_E(\gamma)(\{i,j\})=\{q,l\}$. By definition of $P'_E$, for the $\{q,l\}$-th component $(P'_E(\gamma)s)_{\{q,l\}}$ of the $|E|\times 1$ column vector $P'_E(\gamma)s$, we have:
$$(P'_E(\gamma)s)_{\{q,l\}}
= 
\begin{cases}
   -s_{\{i,j\}} ,& \text{if } i=(v,\be) \textrm{ and } j=(v,\bm e'), \be_h\neq \bm e'_h \textrm{ and } \gamma_h=1\\
    s_{\{i,j\}},              & \text{otherwise}
\end{cases}
$$
Next we consider $(R(G,p)(P_V\otimes I_d)(\gamma) \dot p)_{\{q,l\}}$ and show that it is equal to $(P'_E(\gamma)s)_{\{q,l\}}$.

Note that $(R(G,p) \dot p)_{\{q,l\}}= \langle(p_q-p_l),\dot p_q\rangle+\langle(p_l-p_q),\dot p_l\rangle$. Thus, by the definition of $P_V\otimes I_d$, we have
$$(R(G,p)(P_V\otimes I_d)(\gamma) \dot p)_{\{q,l\}}= \langle(p_q-p_l),\dot p_i\rangle+\langle(p_l-p_q),\dot p_j\rangle.$$

Suppose first that  $i=(v,\be)$  and $j=(v,\bm e')$, where $\be_h\neq \bm e'_h$ and $\gamma_h=1$. Without loss of generality, suppose that $\be_h=0$ and  $\bm e'_h=1$. Let $p_q=p_i+\tau$ and $p_l=p_j+\tau'$, where $\tau=\tau_{\gamma}(i)$ and $\tau'=\tau_{\gamma}(j)$ are the respective extrusion directions of $i$ and $j$ induced by $\gamma$. (Recall Section~\ref{subsec:defex}.) Note that by definition of $i$ and $j$, we have $\tau- \tau'=2 \tau_h$, where $\tau_h$ is the $h$-th extrusion direction. Moreover, we have $p_j=p_i+\tau_h$. It follows that
 $$p_q-p_l=p_i-p_j+2\tau_h=p_j-p_i.$$
Therefore, 
$$(R(G,p)(P_V\otimes I_d)(\gamma) \dot p)_{\{q,l\}}=\langle(p_j-p_i),\dot p_i\rangle+\langle(p_i-p_j),\dot p_j\rangle= -s_{\{i,j\}}$$
as claimed.

Suppose next that $i$ and $j$ are of the form $i=(v,\be)$ and $j=(w,\be)$, with $v\neq w$, or $i=(v,\be)$ and $j=(v,\bm e')$, with $\be_h\neq \bm e'_h$ and $\gamma_h=0$.  Then, by the $t$-fold extrusion symmetry of $(G,p)$, in both cases we have $\tau_{\gamma}(i)=\tau_{\gamma}(j)$. So if we denote $\tau=\tau_{\gamma}(i)$, then 
 $p_q=p_i+\tau$ and $p_l=p_j+\tau$. Therefore, we have $p_q-p_l=p_i-p_j$ and $p_l-p_q=p_j-p_i$, and hence
$$(R(G,p)(P_V\otimes I_d)(\gamma) \dot p)_{\{q,l\}}=\langle(p_i-p_j),\dot p_i\rangle+\langle(p_j-p_i)\dot p_j\rangle= s_{\{i,j\}}$$
as claimed. This completes the proof.
\end{proof}

It is an immediate consequence of Theorem~\ref{thm:block}  and Schur's lemma (see \cite{Serre}, for example) that we can block-decompose the rigidity matrix of $(G,p)$, provided that $(G,p)$ has $t$-fold extrusion symmetry. We introduce some basic notation before we state the result.

Recall that any element of $\Gamma=\mathbb{Z}_2^t$ is of the form $\gamma=(x_1,\ldots, x_t)$ with $x_i\in \{0,1\}^t$ for each $i$. Let $r=|\Gamma|=2^t$. It is an elementary fact from group representation theory that $\Gamma$ has $r$ non-equivalent irreducible representations
which we denote by  $\{\rho_{\gamma}\colon {\gamma}\in \Gamma\}$ (see e.g. \cite{Serre}).
For each ${\gamma}\in \Gamma$, $\rho_{\gamma}$ is defined by
\begin{align} \label{eq:abelian_rho}
\rho_{{\gamma}}:\Gamma&\rightarrow \{1,-1\}  \nonumber\\
{\gamma'}&\mapsto (-1)^{x'_1x_1+x'_2x_2 + \ldots + x'_tx_t},
\end{align}
where $\gamma'=(x'_1,\ldots, x'_t)$.

The irreducible representation that assigns $1$ to each element of $\Gamma$ is called the \emph{trivial representation}.

\begin{cor} \label{cor:block} Let $(G,p)$ be  a framework in $\mathbb{R}^d$ with $t$-fold extrusion symmetry and let $\theta:\Gamma\to \textrm{Aut}(G)$ be the extrusion action. Then there exist non-singular matrices $A$ and $B$ such that $B^T R(G,p) A$ is block-decomposed as
\begin{equation}
\label{rigblocks}
B^{\top}R(G,p)A:=\widetilde{R}(G,p)
=\left(\begin{array}{ccc}\widetilde{R}_{0}(G,p) & & \mathbf{0}\\ & \ddots & \\\mathbf{0} &  &
\widetilde{R}_{r-1}(G,p) \end{array}\right)\textrm{,}
\end{equation}
where the submatrix block $\widetilde{R}_{\kappa}(G,p)$ corresponds to the $\kappa$-th irreducible representation of  $\Gamma$.
\end{cor}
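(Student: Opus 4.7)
The plan is a direct invocation of Schur's lemma, using the intertwining property established in Theorem~\ref{thm:block} as the key input.

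First, I would record the representation-theoretic facts about $\Gamma=\mathbb{Z}_2^t$. This is a finite abelian group of order $r=2^t$, so by Maschke's theorem every finite-dimensional real representation of $\Gamma$ is completely reducible, i.e.\ a direct sum of irreducible subrepresentations. As noted in \eqref{eq:abelian_rho}, the $r$ irreducible representations $\rho_0,\ldots,\rho_{r-1}$ are all one-dimensional and take values in $\{\pm 1\}$, so they are in particular self-dual and defined over $\mathbb{R}$. Hence both the external representation $P_V\otimes I_d$ on $\mathbb{R}^{d|V|}$ and the internal representation $P'_E$ on $\mathbb{R}^{|E|}$ split as orthogonal direct sums of isotypic components
\begin{equation*}
\mathbb{R}^{d|V|} \;=\; \bigoplus_{i=0}^{r-1} U_i, \qquad \mathbb{R}^{|E|} \;=\; \bigoplus_{i=0}^{r-1} W_i,
\end{equation*}
where $U_i$ (resp.\ $W_i$) is the $\rho_i$-isotypic subspace.

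Second, I would construct $A$ and $B$ explicitly. For each $i$, the projection onto $U_i$ is given by the standard idempotent
\begin{equation*}
\Pi_i^{U} \;=\; \frac{1}{|\Gamma|}\sum_{\gamma\in\Gamma} \rho_i(\gamma)\,(P_V\otimes I_d)(\gamma),
\end{equation*}
and analogously $\Pi_i^{W}$ for $W_i$ using $P'_E$. Choosing orthonormal bases of each $U_i$ and of each $W_i$ and concatenating them (in the order $i=0,\ldots,r-1$) yields orthogonal matrices $A$ and $B$ whose columns are these new bases; in particular $A^{-1}=A^\top$ and $B^{-1}=B^\top$.

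Third, I would apply Schur's lemma in the following form: if $\rho_i\not\cong\rho_j$ are non-equivalent irreducibles, then any $\Gamma$-linear map from $\rho_i$ to $\rho_j$ is zero. Since Theorem~\ref{thm:block} says that $R(G,p)\in{\rm Hom}_\Gamma(P_V\otimes I_d, P'_E)$, it follows that $R(G,p)$ maps $U_i$ into $W_i$ for every $i$ and carries every cross component to zero. Expressing this in the new bases, the matrix $B^\top R(G,p) A$ is precisely block-diagonal, with the $i$-th block $\widetilde{R}_i(G,p)$ representing the restricted map $R(G,p)|_{U_i}\colon U_i\to W_i$, which is the decomposition claimed in~\eqref{rigblocks}.

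There is no real obstacle in the argument itself; it is the standard block-diagonalisation that already appears in the point-group symmetric setting. The only care required is to ensure that the representations $P_V\otimes I_d$ and $P'_E$ are indeed $\mathbb{R}$-representations (which is automatic since all $\rho_i$ take values in $\{\pm 1\}$), so that one does not need to pass to $\mathbb{C}$ and worry about complex conjugate pairs of irreducibles. The explicit projector formula above will also be the right starting point for the Fowler-Guest character counts developed in Section~\ref{sec:fowler-guest}, since it lets one compute the dimensions of the blocks $\widetilde{R}_i(G,p)$ via traces.
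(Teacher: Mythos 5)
Your proposal is correct and follows essentially the same route as the paper: the paper likewise derives the corollary directly from Theorem~\ref{thm:block} together with Schur's lemma, by decomposing $\mathbb{R}^{d|V|}$ and $\mathbb{R}^{|E|}$ into the invariant subspaces associated with the irreducible representations of $\mathbb{Z}_2^t$ and choosing symmetry-adapted bases for them (the columns of $A$ and $B$). Your explicit use of the isotypic projection idempotents is just a concrete way of producing the bases the paper describes in (\ref{dirsumofv2}) and (\ref{dirsumofv2Hi}), so there is nothing to add.
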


The  symmetry-adapted coordinate systems that yield the block-decomposition of the rigidity matrix can be obtained as follows.
Recall that every linear representation of $\Gamma$ can be written uniquely, up to equivalency of the direct summands, as a direct sum of the irreducible linear representations of $\Gamma$. So we have
\begin{equation}
\label{irrep}
(P_V\otimes I_d)= \lambda_{0}\rho_{0}\oplus\ldots\oplus \lambda_{r-1}\rho_{r-1} \textrm{, where } \lambda_{0},\ldots,\lambda_{r-1}\in \mathbb{N}\cup {\{0\}} \textrm{.}
\end{equation}
For each $\kappa=0,\ldots,r-1$, there exist $\lambda_{\kappa}$ subspaces $\big(X^{(\rho_{\kappa})}\big)_{1},\ldots, \big(X^{(\rho_{\kappa})}\big)_{\lambda_{\kappa}}$ of the $\mathbb{R}$-vector space $\mathbb{R}^{d|V|}$ which correspond to the $\lambda_{\kappa}$ direct summands in (\ref{irrep}), so that
\begin{equation}
\label{dirsumofvs}
\mathbb{R}^{d|V|}=X^{(\rho_{0})}\oplus \ldots \oplus X^{(\rho_{r-1})} \textrm{,}
\end{equation}
where
\begin{equation}
\label{dirsumofv2}
X^{(\rho_{\kappa})}= \big(X^{(\rho_{\kappa})}\big)_{1}\oplus \ldots \oplus \big(X^{(\rho_{\kappa})}\big)_{\lambda_{\kappa}} \textrm{.}
\end{equation}

Similarly, for the internal representation $P'_E$ of $\Gamma$, we have the direct sum decomposition
\begin{equation}
\label{irrepHi}
P'_E= \mu_{0}\rho_{0}\oplus\ldots\oplus \mu_{r-1}\rho_{r-1} \textrm{, where } \mu_{0},\ldots,\mu_{r-1}\in \mathbb{N}\cup {\{0\}} \textrm{.}
\end{equation}
For each $\kappa=0,\ldots, r-1$, there exist $\mu_{\kappa}$ subspaces $\big(Y^{(\rho_{\kappa})}\big)_{1},\ldots, \big(Y^{(\rho_{\kappa})}\big)_{\mu_{\kappa}}$ of the $\mathbb{R}$-vector space $\mathbb{R}^{|E|}$ which correspond to the $\mu_{\kappa}$ direct summands in (\ref{irrepHi}), so that
\begin{equation}
\label{dirsumofvsHi}
\mathbb{R}^{|E|}=Y^{(\rho_{0})}\oplus \ldots \oplus Y^{(\rho_{r-1})} \textrm{,}
\end{equation}
where
\begin{equation}
\label{dirsumofv2Hi}
Y^{(\rho_{\kappa})}= \big(Y^{(\rho_{\kappa})}\big)_{1}\oplus \ldots \oplus \big(Y^{(\rho_{\kappa})}\big)_{\mu_{\kappa}} \textrm{.}
\end{equation}
If we choose bases  $\big(A^{(\rho_{\kappa})}\big)_{1},\ldots, \big(A^{(\rho_{\kappa})}\big)_{\lambda_{\kappa}}$  for the subspaces in (\ref{dirsumofv2}) and we also choose bases  $\big(B^{(\rho_{\kappa})}\big)_{1},\ldots, \big(B^{(\rho_{\kappa})}\big)_{\mu_{\kappa}}$ for the subspaces in  (\ref{dirsumofv2Hi}), then   $\bigcup_{\kappa=0}^{r-1}\bigcup_{i=1}^{\lambda_\kappa}\big(A^{(\rho_{\kappa})}\big)_{i}$ and  $\bigcup_{\kappa=0}^{r-1}\bigcup_{i=1}^{\mu_\kappa}\big(B^{(\rho_{\kappa})}\big)_{i}$ are symmetry-adapted bases with respect to which  the rigidity matrix is block-decomposed as shown in (\ref{rigblocks}). 

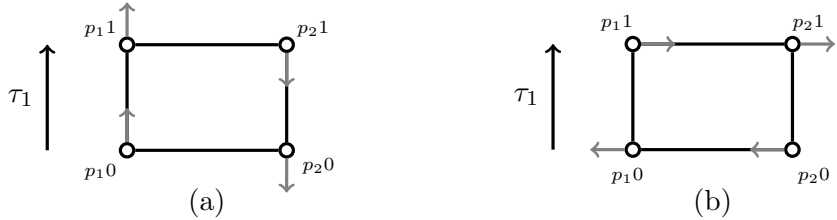
\begin{figure}[htp]
\begin{center}
 \begin{tikzpicture}[very thick,scale=0.7]
\tikzstyle{every node}=[circle, draw=black, fill=white, inner sep=0pt, minimum width=5pt];
             \path (0,0) node (p1) {} ;
        \path (3,0) node (p2) {} ;
       
         \path (0,2) node (p1o) {} ;
        \path (3,2) node (p2o) {} ;

          \node [draw=white, fill=white] (a) at (-0.5,-0.4) {\tiny $p_10$};
         \node [draw=white, fill=white] (a) at (3.6,-0.3) {\tiny $p_20$};
        \node [draw=white, fill=white] (a) at (-0.5,2.3) {\tiny $p_11$};
         \node [draw=white, fill=white] (a) at (3.5,2.3) {\tiny $p_21$};
        
        \draw (p1)  --  (p2);
         \draw (p1o)  --  (p2o);
      
        \draw (p1)  --  (p1o);
         \draw (p2)  --  (p2o);
         
               \draw[->](-1.5,0)--(-1.5,2);
         \node [draw=white, fill=white] (a) at (-2,1) {$\tau_1$};
         
              \draw[gray,->] (p1)  --  (0,0.8);
           \draw[gray,->] (p2)  --  (3,-0.8);
          \draw[gray,->] (p1o)  --  (0,2.8);
           \draw[gray,->] (p2o)  --  (3,1.2);
         
        \node [draw=white, fill=white] (a) at (1.5,-1) {(a)};
      \end{tikzpicture}
            \hspace{2cm}
\begin{tikzpicture}[very thick,scale=0.7]
\tikzstyle{every node}=[circle, draw=black, fill=white, inner sep=0pt, minimum width=5pt];
      
              \node [draw=white, fill=white] (a) at (-0.1,-0.5) {\tiny $p_10$};
         \node [draw=white, fill=white] (a) at (3.4,-0.5) {\tiny $p_20$};
        \node [draw=white, fill=white] (a) at (-0.3,2.4) {\tiny $p_11$};
         \node [draw=white, fill=white] (a) at (3.3,2.4) {\tiny $p_21$};
      
        \path (0,0) node (p1) {} ;
        \path (3,0) node (p2) {} ;
       
         \path (0,2) node (p1o) {} ;
        \path (3,2) node (p2o) {} ;

        \draw (p1)  --  (p2);
         \draw (p1o)  --  (p2o);
      
        \draw (p1)  --  (p1o);
         \draw (p2)  --  (p2o);
         
          \draw[gray,->] (p1)  --  (-0.8,0);
           \draw[gray,->] (p2)  --  (2.2,0);
          \draw[gray,->] (p1o)  --  (0.8,2);
           \draw[gray,->] (p2o)  --  (3.8,2);
         
         \draw[->](-1.5,0)--(-1.5,2);
         \node [draw=white, fill=white] (a) at (-2,1) {$\tau_1$};
        \node [draw=white, fill=white] (a) at (1.5,-1) {(b)};
      \end{tikzpicture}
\end{center}
\caption{(a) A plane framework  $(K_2\square K_2,p)$ with ($1$-fold) extrusion symmetry in the direction of $\tau_1$. (a) shows a fully-symmetric (i.e. $\rho_{0}$-symmetric) infinitesimal motion (the velocity vectors are preserved under extrusion) and (b) shows an anti-symmetric (i.e. $\rho_{1}$-symmetric) infinitesimal motion (the velocity vectors are reversed under extrusion). The motion in (b) may be obtained from the one in (a) by adding an infinitesimal rotation. (The framework may also be considered as a $2$-fold-extrusion symmetric framework $(K_1\square K_2^{\square 2},p)$.)}
\label{fig:simple}
\end{figure}

\begin{defn}\label{def:sy}
A vector $u\in \mathbb{R}^{d|V|}$ is \emph{symmetric with respect to the irreducible linear representation $\rho_{\gamma}$} of $\Gamma$ if $u\in X^{(\rho_{\gamma})}$. Similarly, a vector $s\in \mathbb{R}^{|E|}$ is \emph{symmetric with respect to  $\rho_{\gamma}$}  if $s\in Y^{(\rho_{\gamma})}$. A vector (in $\mathbb{R}^{d|V|}$ or $\mathbb{R}^{|E|}$) that is symmetric with respect to the trivial representation is also called \emph{fully-symmetric}.
\end{defn}

Note that the kernel of the block matrix $\widetilde{R}_{\gamma}(G,p)$ is isomorphic to the space of all infinitesimal motions of $(G,p)$ that are symmetric with respect to $\rho_{\gamma}$. Since the irreducible representations $\rho_{\gamma}$ of $\mathbb{Z}_2^t$ are all one-dimensional and assign either $1$ or $-1$ to each group element, the infinitesimal motions of symmetry $\rho_{\gamma}$ have a simple form: if $\dot p_i$ is the velocity vector at the point $p_i$, then the velocity vector of the point $p_{\gamma' i}$ is also $\dot p_i$ if $\rho_{\gamma}(\gamma')=1$ and it is $-\dot p_i$ if $\rho_{\gamma}(\gamma')=-1$.  In particular, a fully-symmetric infinitesimal motion assigns the same velocity vector to all  points that lie in the same orbit under $\mathbb{Z}^t_2$. See Figure~\ref{fig:simple} for an illustration.

\begin{rem} An infinitesimal motion of a bar-joint framework $(G,p)$ with $t$-fold extrusion symmetry, where none of the extrusion directions $\tau_1,\ldots, \tau_t$ are parallel, must either be fully-symmetric or $\rho_{\gamma}$-symmetric, where $\gamma\in \mathbb{Z}_2^t$ has exactly one non-zero entry. To see this, consider a  $\rho_{\gamma}$-symmetric infinitesimal motion $u\in \mathbb{R}^{d|V|}$, where $\gamma$ has a $1$ in entries $h$ and $h'$, so that $u((v,\mathbf{0}))\neq 0$ for some vertex $(v,\mathbf{0})$. Then $u$ assigns the same non-zero velocity vector to the vertices $(v,\mathbf{0})$ and $(v,\mathbf{e}_h +\mathbf{e}_{h'})$ and the negative of that vector to  $(v,\mathbf{e}_h)$, where $\mathbf{e}_h$ has a $1$ in the $h$-th entry and zeros elsewhere.  For $u$ to be a $\rho_{\gamma}$-symmetric infinitesimal motion of $(G,p)$, the vectors $u((v,\mathbf{0}))$ and $u((v,\mathbf{e}_h))$  must be perpendicular to $\tau_h$, and the vectors  $u((v,\mathbf{e}_h))$ and $u((v,\mathbf{e}_h +\mathbf{e}_{h'}))$  must be perpendicular to $\tau_{h'}$. However, since $\tau_h$ and $\tau_{h'}$ are not parallel,  this implies that $u((v,\mathbf{e}_h))$ is zero, a contradiction.
\end{rem}

\begin{rem}\label{rem:rotmot} A $d$-dimensional bar-joint framework $(G,p)$ with $G=H\square K_2^{\square t}$ and $t$-fold extrusion symmetry will always have a fully-symmetric infinitesimal flex, provided that $H\neq K_1$.
%the vertices of $H$ affinely span a space of dimension at least $d-1$.
The framework $(G,p)$ contains $2^t$ congruent realisations of the graph $H$, one for each element $\be\in \{0,1\}^t$. Let $H_{\bf e}$ be the subgraph of $G$ with vertex set $\{(v_j,\be)|\, v_j\in V(H)\}$.  Since $H\neq K_1$, there exists an infinitesimal rotation that assigns two different velocity vectors to two vertices of $H_\mathbf{0}$.
If we apply  this same infinitesimal rotation to each of the frameworks $(H_{\be}, p|_{V(H_{\be})})$ (as in Figure~\ref{fig:ex1maxwell}(a)), then the resulting infinitesimal motion $\dot p$ of $(G,p)$ is clearly non-trivial and fully-symmetric. 

A framework $(G,p)$ with  $t$-fold extrusion symmetry 
will also always have an infinitesimal flex that shears the framework orthogonally to an extrusion direction (as in Figure~\ref{fig:ex1maxwell}(d)), unless $H=K_1$ and $t=1$. These infinitesimal flexes are anti-symmetric with respect to the corresponding extrusion direction. However,
%if $H\neq K_1$, then one of them is
these may be related to the fully-symmetric infinitesimal flex described above by an infinitesimal rotation, as is illustrated  in Figure~\ref{fig:ex1maxwell}. 
\end{rem}

The block-decomposition of the rigidity matrix breaks up the problem of analysing the infinitesimal rigidity of a framework with $t$-fold extrusion symmetry into a number of independent sub-problems. In particular, it leads to refined Maxwell-type counts, as we will see in Section~\ref{sec:fowler-guest}. While a direct geometric analysis  of a $t$-fold extrusion-symmetric framework may be used to reveal its infinitesimal (and even finite) flexes, the refined Maxwell count will also provide information on self-stresses of the framework.

\section{Point-hyperplane frameworks}\label{sec:general}

\subsection{Rigidity of point-hyperplane frameworks} \label{sec:pthy}

Let  $G=(V_{\mathcal P}\cup V_{\mathcal H}, E)$ be a graph where the vertex set $V$ is partitioned into two non-empty sets $V_{\mathcal P}$ and $V_{\mathcal H}$.  This induces a partition of the edge set $E$  into the sets $E_{\mathcal P \mathcal P}, E_{\mathcal P \mathcal H}, E_{\mathcal H \mathcal H}$, where $E_{\mathcal P \mathcal P}$ consists of pairs of vertices in $V_{\mathcal P}$, $E_{\mathcal H \mathcal H}$ consists of pairs of vertices in $V_{\mathcal H}$, and $E_{\mathcal P \mathcal H}$ consists of pairs of vertices with one vertex in $V_{\mathcal P}$ and the other one in $V_{\mathcal H}$. We call such a graph $G$ a \emph{$\mathcal P \mathcal H$-graph}.

A \emph{point-hyperplane framework} in $\mathbb{R}^d$ is a triple $(G,p,\ell)$, where $G=(V_{\mathcal P}\cup V_{\mathcal H}, E)$ is a $\mathcal P \mathcal H$-graph and $p:V_{\mathcal P}\to \mathbb{R}^d$ and $\ell=(a,r):V_{\mathcal H}\to (\mathbb{R}^{d}\setminus \{0\})\times \mathbb{R}$  are maps.
%$\ell=(a,r):V_{\mathcal H}\to \mathbb{S}^{d-1}\times \mathbb{R}$  are maps. 
These maps $p$ and $\ell$ are interpreted as follows: each vertex $i$ in $V_{\mathcal P}$ is mapped to the point $p_i$ in $\mathbb{R}^d$ and each vertex $j$ in $V_{\mathcal H}$ is mapped to the hyperplane in $\mathbb{R}^d$ given by  $\{x\in \mathbb{R}^d: \langle a_j, x\rangle-r_j=0\}$ \cite{EJNSTW}.  (Note that in  \cite{EJNSTW}, a hyperplane is defined by $\ell=(a,r):V_{\mathcal H}\to \mathbb{S}^{d-1}\times \mathbb{R}$, i.e. the normal vector of each hyperplane is assumed to be a unit vector. However, the hyperplane is unchanged if $a$ and $r$ are multiplied by any $\alpha\in\mathbb{R}\setminus\{0\}$, and in this paper we find it more convenient to allow every point in $\mathbb{R}^{d+1}$ to represent a hyperplane $\ell = (a,r)$, where points $(a,r)$ and $(a',r')$ with $ra'=r'a$ correspond to the same hyperplane.) 
A point-hyperplane framework in $\mathbb{R}^2$ is also called a \emph{point-line framework} \cite{JO}. 

We say that the points and hyperplanes of $(G,p,\ell)$ in $\mathbb{R}^d$ affinely span at least  $\mathbb{R}^{d-1}$ if the coordinates of all of the point vertices in $p$ and the coordinates of all of the points which lie on the hyperplanes in $\ell$ affinely span at least $\mathbb{R}^{d-1}$.

In the following we will assume that the points $p(V_{\mathcal P})$ and hyperplanes $\ell(V_{\mathcal H})$ affinely span at least $\mathbb{R}^{d-1}$. Each edge in $E_{\mathcal P \mathcal P}, E_{\mathcal P \mathcal H}, E_{\mathcal H \mathcal H}$ indicates a point-point distance constraint, a point-hyperplane distance constraint, or a hyperplane-hyperplane
angle constraint, respectively. Moreover, there is a normalisation constraint for each hyperplane normal. The constraints are: 
\begin{align}
\| p_i - p_j \|&=\textrm{const} && (\{i,j\}\in E_{\mathcal P \mathcal P})  \\
\langle p_i, a_j\rangle-r_j&=\textrm{const} && (\{i,j\}\in E_{\mathcal P \mathcal H})\\
\langle a_i, a_j\rangle&=\textrm{const} && (\{i,j\}\in E_{\mathcal H \mathcal H})\\
\langle a_i, a_i\rangle&=\textrm{const} && (i\in V_{\mathcal H}).
\end{align}
This leads to the following system of first-order constraints:
\begin{align}
\langle p_i - p_j, \dot{p}_i-\dot{p}_j\rangle&=0 && (\{i,j\}\in E_{\mathcal P \mathcal P}) \label{eq:line_inf1_euc} \\
\langle p_i, \dot{a}_j\rangle+\langle \dot{p}_i, a_j\rangle -\dot{r}_j&=0 && (\{i,j\}\in E_{\mathcal P \mathcal H}) \label{eq:line_inf2_euc}\\
\langle a_i, \dot{a}_j\rangle+\langle \dot{a}_i, a_j\rangle&=0 && (\{i,j\}\in E_{\mathcal H \mathcal H}) \label{eq:line_inf3_euc}\\
\langle a_i, \dot{a}_i\rangle&=0 && (i\in V_{\mathcal H}).
\label{eq:a_inf_euc}
\end{align}
A map $(\dot{p},\dot\ell)$ is said to be an {\em
infinitesimal motion} of $(G,p,\ell)$ if it satisfies this system of
linear constraints. An infinitesimal motion is trivial if there exists a $d \times d$ skew-symmetric matrix $S$ and a vector $b \in \mathbb{R}^d$ such that $\dot{p}_i=Sp_i+b$ for all $i \in V_\mathcal{P}$ and $(\dot{a}_i,\dot{r}_i)=(Sa_i, r+\langle b,a_i \rangle )$ for all $i \in V_\mathcal{H}$.
$(G,p,\ell)$ is {\em infinitesimally
rigid} if every infinitesimal motion of $(G,p,\ell)$ is trivial (i.e.  the dimension of the space of its infinitesimal motions is equal to ${d+1\choose 2}$.

\subsubsection{Point-hyperplane frameworks with parallel constraints} \label{sec:parcon}

When we extrude a hyperplane in a direction away from the hyperplane, we create two hyperplanes that are parallel and constrained to remain parallel. It is therefore important to consider the representation of angle constraints between two hyperplanes which are parallel. 

If two hyperplanes are constrained to be parallel then the cosine of the angle between the hyperplanes is one and the first derivative of the angle constraint with respect to the hyperplane angle is zero, which means that the  corresponding row  in the rigidity matrix is always zero. The constraint $\{i,j\}$ that two hyperplanes  are parallel removes $d-1$ degrees of freedom and this should be represented by $d-1$ rows in the rigidity matrix. The edges in $ E_{\mathcal{HH}}$ of the form $\{i,j\}$ are therefore partitioned into two sets $E_{\mathcal{HH}\not\parallel}$ and $E_{\mathcal{HH}\parallel}$ according to whether the hyperplanes $i$ and $j$ are parallel or not. The constraints between  $a_i$ and $a_j$ for $\{i,j\} \in E_{\mathcal{H}\parallel}$ are defined as follows.  

For $d=2$ the constraint (3.3) becomes $\langle a_i,a_j^\perp \rangle=0$ where $\langle a_j, a_j^\perp \rangle=0$.  %a.xb.y-a.yb.x$.  
This choice gives the first-order constraint $\langle a_j^\perp, \dot{a}_i\rangle-\langle a_i^\perp, \dot{a}_j\rangle=0$ which has a non-zero derivative. 

For $d=3$ we introduce an orthogonal set of axes $\{a,u_1,u_2\}$, choose $a=a_i$ (or $a=a_j$) and write the parallel constraint as two polynomial constraints $\langle(a_i \wedge a_j), u_1 \rangle=0$ and $\langle(a_i \wedge  a_j), u_2\rangle=0$.  It is straightforward to verify that the zero set of the constraint equations does not depend on the precise choice of $\{u_1,u_2\}$.  This  gives two first-order constraints $\langle (a_j \wedge u_i),\dot{a_i}\rangle-\langle(a_i\wedge u_i),\dot{a_j}\rangle=0$,  $i=1,2$.

For $d >3$, let $a_i,u_1,\dots,u_{d-1}$ be an orthogonal basis for $\mathbb{R}^d$ and let $D_k$ be the $d\times d$ matrix with rows $a_i,a_j,u_1,\dots,u_{k-1}, u_{k+1},\dots,u_{d-1}$. The $d-1$ constraint equations implied by the parallel constraint are $\det(D_k)=0$ for $k=1,\dots,d-1$. For $d=2,3$ these constraint equations are the same as the equations given in the previous paragraph.

The corresponding rows in the rigidity matrix are easily found by differentiating these equations (using Jacobi's formula for the derivative of a determinant if required).

%\begin{rem} \label{rem:dec}
%The hyperplanes can be partitioned into equivalence classes (using graph connectivity by the parallel constraints) and a vector $a_{c(k)} \in \mathbb{R}^d$ (representing the angle $a_{c(k)}$ of all the parallel hyperplanes in the class) assigned to each equivalence class instead of to each hyperplane.  The parallel constraints then play no further part.  This alternative formulation easily extends to all dimensions,  leads to fewer constraints equations and a smaller rigidity matrix.   However in this alternative there is no longer a pairing between the angle variables $a_{c(k)}$ and the translation variables $r_k$ for $k \in V_{\mathcal{H}}$ and the proof of Theorem~\ref{thm:blockph} below is no longer valid for type 2 (PH) constraints.
The hyperplanes can be partitioned into equivalence classes using graph connectivity by the parallel constraints so that all hyperplanes in the same equivalence class are constrained to be parallel. We will use this partition of the vertices in $V_\mathcal{H}$ %which is induced by constraints between parallel hyperplanes in a framework 
when we consider finite flexibility in Section~\ref{sec:finiteflex}. We therefore define a \emph{decorated $\mathcal{PH}$-graph} to be a $\mathcal{PH}$-graph together with a partition of $V_\mathcal{H}$ into a set $V_c$ of parallel classes.  
The decoration of a $\mathcal{PH}$-graph induces the partition of $E_\mathcal{HH}=E_\mathcal{HH\parallel}\cup E_\mathcal{HH\not\parallel}$ where $\{i,j\} \in E_\mathcal{HH}$ is in $E_\mathcal{HH\parallel}$ if and only if $i$ and $j$ are in the same parallel class.  
Let $c:V_{\mathcal{H}} \to V_c$ be the map which takes $i$ to its equivalence class.

%\textcolor{red}{In section 5 we start with a decorated graph to define the measurment set. I don't think we need to define a decorated point-hyperplane framework, so this para could be removed?} A \emph{decorated point-hyperplane framework} $(G,p,\ell)$ is a point-hyperplane framework on a decorated point-hyperplane graph, $G$,  where vertices $i,j \in V_\mathcal{H}$ are in the same parallel class if and only if $\{i,j\} \in E_\mathcal{HH}$ and $a_i$ is parallel to $a_j$. It is clear that every point-hyperplane framework induces a unique decorated point-hyperplane framework and hence a unique decoration of $G$. 

\begin{rem} \label{rem:dec}
The partition of the hyperplanes into parallel classes could be used %to reduce the size of the rigidity matrix by assigning
to assign a vector $a_{c(i)} \in \mathbb{R}^d$ to represent the angle $a_{c(i)}$ of all the hyperplanes in the class
%assigned to each equivalence class 
instead of assigning a vector to each hyperplane.  The parallel constraints would then play no further part.  This alternative formulation easily extends to all dimensions,  leads to fewer constraints equations and a smaller rigidity matrix.   However in this alternative there is no longer a pairing between the angle variables $a_{c(i)}$ and the translation variables $r_i$ for $i \in V_{\mathcal{H}}$ and the proof of Theorem~\ref{thm:blockph} below is no longer valid for type 2 ($\mathcal{PH}$) constraints. Consequently, the block-diagonalisation of the rigidity matrix may no longer be valid.
\end{rem}
\subsection{The rigidity matrix}
The \emph{point-hyperplane rigidity matrix} $R(G,p,\ell)$ of  $(G,p,\ell)$,  i.e. the matrix corresponding to the  linear system given in (\ref{eq:line_inf1_euc}) -- (\ref{eq:a_inf_euc}) is a  $(|E|+|V_{\mathcal H}|+(d-2)(|V_\mathcal{H}|-|V_c|))\times (d|V_{\mathcal{P}}|+(d+1)|V_{\mathcal{H}}|)$ matrix of the following form:
\begin{displaymath}\bordermatrix{& & i & & j & &k& & l&\cr
 \{i,j\} & 0 \ldots  0 & (p_i-p_j) & 0 \ldots  0  & (p_j-p_i) & 0 \ldots  0 & 0 &0 \ldots  0 &  0 &0 \ldots  0 \cr
  & & &  & &  \vdots &  & &   &  \cr
\{i,k\} & 0 \ldots  0 &a_k & 0 \ldots  0  & 0& 0 \ldots  0 & (p_i, -1) &0 \ldots  0 &  0 &0 \ldots  0 \cr
 & & &  & &  \vdots &  & &   &  \cr
\{k,l\} & 0 \ldots  0 & 0 & 0 \ldots  0  & 0& 0 \ldots  0 & (a_{l},0) &0 \ldots  0 &  (a_{k},0) &0 \ldots  0 \cr
 & & &  & &  \vdots &  & &   &  \cr
k & 0 \ldots  0 & 0& 0 \ldots  0  & 0& 0 \ldots  0 &(a_{k},0)&0 \ldots  0 & 0 &0 \ldots  0 \cr
& & &  & &  \vdots &  & &   &  \cr
l & 0 \ldots  0 & 0 & 0 \ldots  0  & 0& 0 \ldots  0 &0 &0 \ldots  0 & (a_{l},0) &0 \ldots  0 \cr
& & &  & &  \vdots &  & &   &  \cr
}
\textrm{,}\end{displaymath}
where $i,j\in V_{\mathcal{P}}$ and $k,l\in V_{\mathcal{H}}$.  If $d=2$ and $\{k,l\}\in E_{\mathcal{HH}\parallel}$ then the entries in the row for $\{k,l\}$ are  $(a_l^\perp,0)$ under the columns for $k$ and $-(a_k^\perp,0)$ under the columns for $l$.  If $d=3$ and $\{k,l\} \in E_{\mathcal{HH}}$ then the row for $\{k,l\}$ becomes two rows with entries $(a_l \wedge u_i,0)$ under the columns for $k$ and $(-a_k \wedge  u_i,0)$ under the columns for $l$ for $i=1,2$. Analogously, using the constraints given in the previous subsection, the row for $\{k,l\}$ becomes $d-1$ rows for an arbitrary $d$, but $d=2,3$ are the most relevant cases for applications.

\subsection{Point-hyperplane frameworks with extrusion symmetry} \label{sec:pthy}

Extrusion symmetry for point-hyperplane frameworks may be defined analogously as for bar-joint frameworks. However, in the following we will extend the definition of extrusion symmetry to allow for the possibility that  some hyperplanes of a point-hyperplane framework contain some of the extrusion directions of the framework, so that they are `fixed' by the corresponding elements of the extrusion symmetry group.

Let $H$ be a $\mathcal P \mathcal H$-graph with $V(H)=V_{\mathcal P}(H)\cup V_{\mathcal H}(H)$, where $V_\mathcal P(H)=\{v_1,\ldots, v_a\}$ and $V_{\mathcal H}(H)=\{w_1,\ldots, w_b\}$. 
Let $G=H\square K_2=(V,E)$ be the Cartesian product of $H$ and $K_2$, where $V(K_2)=\{0,1\}$. Then $G$ is again a $\mathcal P \mathcal H$-graph, where the partition of the vertex set of $G$ is inherited from the partition of $V(H)$ (i.e. any vertex of the form $(v_j,e)$, $v_j\in V_{\mathcal P}(H)$, is in $V_{\mathcal P}$ and any vertex of the form $(w_j,e)$, $w_j\in V_{\mathcal H}(H)$, is in $V_{\mathcal H}$). Let $F$ be a subset of $V_{\mathcal H}(H)$. Then $G'=H\square_{F} K_2$ is the $\mathcal P \mathcal H$-graph  obtained from  $G$ by contracting each pair of vertices $(w,0),(w,1)$ with $w\in F$ to a single vertex, denoted $(w,\star)$, and by removing the edges  $\{(w,0),(w,1)\}$, $w\in F$.

More generally, let $F_1,\ldots , F_t$ be $t$ (not necessarily disjoint) subsets of $V_{\mathcal H}(H)$. Then the $\mathcal P \mathcal H$-graph $G=H\square_{F_1,\ldots, F_t} K_2^{\square t}$ is obtained from $H\square K_2^{\square t}$ by the following procedure: For $h=1,\ldots, t$, contract the pairs of vertices $(w,\bm e), (w,\bm e')$ with $w\in F_h$ and $\bm e$ and $\bm e'$ differing  in the $h$-th coordinate  to a single vertex, which is denoted by $(w,e_1,\ldots, \star, \ldots e_t)$, where $\star$ is in the $h$-th position. (Note that here $\bm e, \bm e'\in \{0,1,\star\}^t$.) Edges between contracted vertices are again  removed. (So if $F_1,\ldots, F_t$ are disjoint, then only pairs of vertices are contracted to a single vertex in $G$, but otherwise multiple vertices are  contracted to the same vertex.)

Then there clearly exists a group homomorphism $\theta: \mathbb{Z}_2^t\to \textrm{Aut}(G)$ given by $\theta(\gamma)=\alpha_{\gamma}$, where  
$$\alpha_{\gamma}(u,\bm e)=(u,\bm e+\gamma) \qquad \textrm{ for all } (u,\bm e)\in V $$ with the addition in $\bm e+\gamma$ taken modulo 2, and $\star+0=\star+1=\star$. %and 
We call $\theta$ the \emph{extrusion action} on $G$.

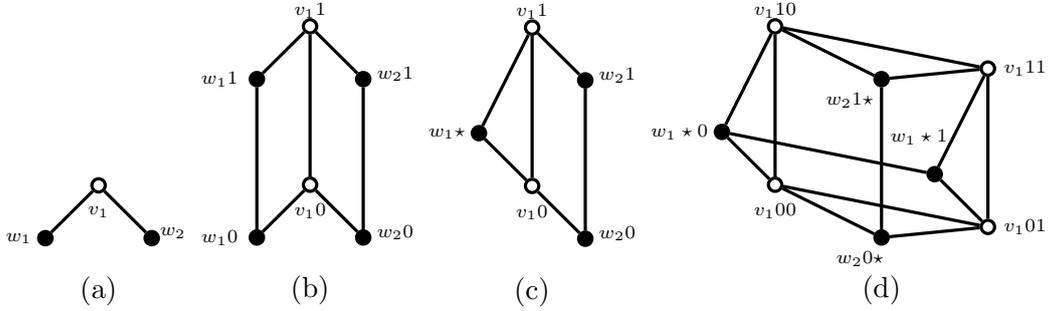
\begin{figure}[htp]
\begin{center}
  \begin{tikzpicture}[very thick,scale=0.7]
\tikzstyle{every node}=[circle, draw=black, fill=white, inner sep=0pt, minimum width=5pt];
     \node [draw=white, fill=white] (a) at (-0.5,0) {\tiny $w_1$};
              \node [draw=white, fill=white] (a) at (2.4,0.1) {\tiny $w_2$};
             \node [draw=white, fill=white] (a) at (1,0.5) {\tiny $v_1$};
                           
        \path (0,0) node[fill=black] (p1) {} ;
        \path (2,0) node[fill=black] (p2) {} ;
        \path (1,1) node (p3) {} ;
        
         \draw (p1)  --  (p3);
        \draw (p3)  --  (p2);
            
        \node [draw=white, fill=white] (a) at (1,-1) {(a)};
      \end{tikzpicture}    
            \vspace{0.3cm}
      \begin{tikzpicture}[very thick,scale=0.7]
\tikzstyle{every node}=[circle, draw=black, fill=white, inner sep=0pt, minimum width=5pt];
     \node [draw=white, fill=white] (a) at (-0.7,0) {\tiny $w_10$};
        \node [draw=white, fill=white] (a) at (-0.7,3) {\tiny $w_11$};
        \node [draw=white, fill=white] (a) at (2.6,0.1) {\tiny $w_20$};
        \node [draw=white, fill=white] (a) at (2.6,3.1) {\tiny $w_21$};
           \node [draw=white, fill=white] (a) at (1,0.5) {\tiny $v_10$};
        \node [draw=white, fill=white] (a) at (1,4.3) {\tiny $v_11$};

        \path (0,0) node[fill=black] (p1) {} ;
        \path (2,0) node[fill=black] (p2) {} ;
        \path (1,1) node (p3) {} ;
        
         \path (0,3) node[fill=black] (p11) {} ;
        \path (2,3) node[fill=black] (p22) {} ;
        \path (1,4) node (p33) {} ;
        
        %\draw (p1)  --  (p2);
         \draw (p1)  --  (p3);
        \draw (p3)  --  (p2);
       % \draw (p11)  --  (p22);
         \draw (p11)  --  (p33);
        \draw (p33)  --  (p22);
        \draw (p1)  --  (p11);
         \draw (p2)  --  (p22);
        \draw (p3)  --  (p33);

        \node [draw=white, fill=white] (a) at (1,-1) {(b)};
      \end{tikzpicture}    
            \vspace{0.3cm}
            \begin{tikzpicture}[very thick,scale=0.7]
\tikzstyle{every node}=[circle, draw=black, fill=white, inner sep=0pt, minimum width=5pt];
     \node [draw=white, fill=white] (a) at (-0.6,2) {\tiny $w_1\star$};
                 \node [draw=white, fill=white] (a) at (2.6,0.1) {\tiny $w_20$};
        \node [draw=white, fill=white] (a) at (2.6,3.1) {\tiny $w_21$};
                   \node [draw=white, fill=white] (a) at (1,0.5) {\tiny $v_10$};
        \node [draw=white, fill=white] (a) at (1,4.3) {\tiny $v_11$};

            \path (2,0) node[fill=black] (p2) {} ;
                \path (2,3) node[fill=black] (p22) {} ;
                        \path (0,2) node[fill=black] (p1) {} ;
          \path (1,1) node (p3) {} ;
        \path (1,4) node (p33) {} ;

                 \draw (p1)  --  (p3);
        \draw (p3)  --  (p2);
    \draw (p22)  --  (p2);
         \draw (p1)  --  (p33);
        \draw (p33)  --  (p22);
      
        \draw (p3)  --  (p33);

        \node [draw=white, fill=white] (a) at (1,-1) {(c)};
      \end{tikzpicture}
      \vspace{0.3cm}
\begin{tikzpicture}[very thick,scale=0.7]
\tikzstyle{every node}=[circle, draw=black, fill=white, inner sep=0pt, minimum width=5pt];
        \node [draw=white, fill=white] (a) at (-0.8,2) {\tiny $w_1\star 0$};
                 \node [draw=white, fill=white] (a) at (2.6,-0.4) {\tiny $w_20\star$};
        \node [draw=white, fill=white] (a) at (2.4,2.6) {\tiny $w_21\star$};
                   \node [draw=white, fill=white] (a) at (1,0.5) {\tiny $v_100$};
        \node [draw=white, fill=white] (a) at (1,4.3) {\tiny $v_110$};
               
                          \node [draw=white, fill=white] (a) at (3.7,1.9) {\tiny $w_1\star 1$};
        \node [draw=white, fill=white] (a) at (5.7,3.2) {\tiny $v_111$};
                   \node [draw=white, fill=white] (a) at (5.7,0.2) {\tiny $v_101$};

            \path (3,0) node[fill=black] (p2) {} ;
                \path (3,3) node[fill=black] (p22) {} ;
                        \path (0,2) node[fill=black] (p1) {} ;
          \path (1,1) node (p3) {} ;
        \path (1,4) node (p33) {} ;
        
         \path (5,3.2) node (p3rt) {} ;
                    \path (5,0.2) node (p3rb) {} ;
                \path (4,1.2) node[fill=black] (p11) {} ;

                 \draw (p1)  --  (p3);
        \draw (p3)  --  (p2);
    \draw (p22)  --  (p2);
         \draw (p1)  --  (p33);
        \draw (p33)  --  (p22);
      
        \draw (p3)  --  (p33);
         \draw (p11)  --  (p3rb);
         \draw (p11)  --  (p3rt);
           \draw (p3rb)  --  (p3rt);
             \draw (p1)  --  (p11);
             \draw (p22)  --  (p3rt);
               \draw (p2)  --  (p3rb);
                 \draw (p3rt)  --  (p33);
                 \draw (p3rb)  --  (p3);
       \node [draw=white, fill=white] (a) at (3,-1) {(d)};
      \end{tikzpicture}
     \end{center}
     \vspace{-0.5cm}
\caption{(a) A $\mathcal P \mathcal H$-graph $H$ with $V_{\mathcal P}(H)=\{v_1\}$ (shown in white) and  $V_{\mathcal H}(H)=\{w_1,w_2\}$ (shown in black). (b) shows the $\mathcal P \mathcal H$-graph $G=H\square K_2$. (c) is  the $\mathcal P \mathcal H$-graph  $G'=H\square_{F} K_2$, where $F=\{w_1\}$. Finally, (d) shows the $\mathcal P \mathcal H$-graph $H\square_{F_1,F_2} K_2^{\square 2}$, where $F_1=\{w_1\}$ and $F_2=\{w_2\}$.}
\label{fig:exptlngr}
\end{figure}

\begin{defn}\label{def:extrsymph}
For $G=H\square_{F_1,\ldots, F_t} K_2^{\square t}$ with extrusion action $\theta$, a point-hyperplane framework $(G,p,\ell)$ in $\mathbb{R}^d$ is said to have \emph{$t$-fold extrusion symmetry} if
 there exist $\tau_1,\ldots, \tau_t \in \mathbb{R}^d\setminus \{0\}$, so that 
  \begin{itemize}
  \item[(i)] for every $(v_j,\bm e)\in V_{\mathcal{P}}$, and every $\gamma\in \mathbb{Z}_2^t$ we have 
 $$p_{\theta(\gamma)((v_j,\bm e))}=p_{(v_j,\bm e)}+ \sum_{i\in X} \tau_i - \sum_{g\in Y} \tau_g,$$ where $X$ is the set of indices in $\{1,\ldots, t\}$ for which $\be$ has an entry of $0$ and $\gamma$ has an entry of $1$, and  $Y$ is the set of indices in $\{1,\ldots, t\}$ for which both $\be$ and $\gamma$ have an entry of $1$. 
 \item[(ii)] for every $(w_j,\bm e)\in V_{\mathcal{H}}$ and every $\gamma\in \mathbb{Z}_2^t$, we have $a_{\theta(\gamma)((w_j,\bm e))}=a_{(w_j,\bm e)}$.
 \item[(iii)] for every  $(w_j,\bm e)\in V_{\mathcal{H}}$, and every $h=1,\ldots, t$, the vector $\tau_h$ lies within the hyperplane associated with $(w_j,\bm e)$ (i.e., $\langle\tau_h, a_{(w_j,\bm e)}\rangle = 0$ for the  normal vector $a_{(w_j,\bm e)}$) if  $w_j\in F_h$. 
 \item[(iv)] for every $(w_j,\bm e)\in V_{\mathcal{H}}$  and every $\gamma\in \mathbb{Z}_2^t$ we have
 $$r_{\theta(\gamma)((w_j,\bm e))}=r_{(w_j,\bm e)}+\sum_{i\in X} \langle a_{(w_j,\bm e)},\tau_i\rangle - \sum_{g\in Y} \langle a_{(w_j,\bm e)},\tau_g\rangle,$$
 where $X$ is the set of indices in $\{1,\ldots, t\}$ for which $\be$ has an entry of $0$ and $\gamma$ has an entry of $1$, and  $Y$ is the set of indices in $\{1,\ldots, t\}$ for which both $\be$ and $\gamma$ have an entry of $1$. 
 \end{itemize}
As in Definition~\ref{def:extrdef}, for $(v_j,\bm e)\in V_{\mathcal{P}}$, the vector  $\tau_{\gamma}((v_j,\bm e))=\sum_{i\in X} \tau_i - \sum_{g\in Y} \tau_g$ is called the \emph{extrusion direction of $(v_j,\be)$ induced by $\gamma$}. Similarly, by (iv), for $(w_j,\bm e)\in V_{\mathcal{H}}$, the vector $\tau_{\gamma}((w_j,\bm e))=\sum_{i\in X} \tau_i - \sum_{g\in Y} \tau_g$ is called the \emph{extrusion direction of $(w_j,\be)$ induced by $\gamma$}. The vectors $\tau_1,\ldots, \tau_t$ are called the \emph{extrusion directions} of $(G,p,\ell)$.
 \end{defn}
 
 Examples of $2$-dimensional point-hyperplane frameworks (i.e., point-line frameworks) with extrusion symmetry are shown in Figure~\ref{fig:exptlngrfw}.

\begin{figure}[htp]
\begin{center}
  \begin{tikzpicture}[very thick,scale=0.7]
\tikzstyle{every node}=[circle, draw=black, fill=white, inner sep=0pt, minimum width=4pt];
     \node [draw=white, fill=white] (a) at (0.3,0.3) {\tiny $p_1$};

        \path (0,0) node (p1) {} ;
       
           \draw(-2,-0.5)--(1.5,-0.5);    
             \draw(-2,-1)--(0,1);   
             
               \draw[dotted](p1)--(0,-0.5);  
                 \draw[dotted](p1)--(-0.5,0.5);  
                 
             \node [draw=white, fill=white] (a) at (0.5,-0.8) {\tiny $\ell_2$};   
                \node [draw=white, fill=white] (a) at (-1.2,0.3) {\tiny $\ell_1$}; 
        \node [draw=white, fill=white] (a) at (0,-1.8) {(a)};
      \end{tikzpicture} 
      \hspace{0.5cm}   
    \begin{tikzpicture}[very thick,scale=0.7]
\tikzstyle{every node}=[circle, draw=black, fill=white, inner sep=0pt, minimum width=4pt];
        \node [draw=white, fill=white] (a) at (-0.7,-0.8) {\tiny $\ell_20$};   
                \node [draw=white, fill=white] (a) at (-1.3,0.2) {\tiny $\ell_10$};
   \node [draw=white, fill=white] (a) at (1.8,1.8) {\tiny $\ell_21$};   
                \node [draw=white, fill=white] (a) at (1.8,0.2) {\tiny $\ell_11$};
   
     \node [draw=white, fill=white] (a) at (0.1,0.4) {\tiny $p_10$};
              \path (0,0) node (p1) {} ;
       
           \draw(-2,-0.5)--(3.5,-0.5);    
             \draw(-2,-1)--(2,3);   
             
               \draw[dotted](p1)--(0,-0.5);  
                 \draw[dotted](p1)--(-0.5,0.5);  
                           
                \node [draw=white, fill=white] (a) at (4.3,2.3) {\tiny $p_11$};                
        \path (4,2) node (p2) {} ;
       
           \draw(0,1.5)--(5.5,1.5);    
             \draw(0,-1)--(4,3);   
             
               \draw[dotted](p2)--(4,1.5);  
                 \draw[dotted](p2)--(3.5,2.5);  
                 
                      \draw[dotted](p1)--(p2); 
                      
             % \draw[dotted](0.2,1.2)--(1.2,0.2);      
                      \draw[dotted](0.2,1.2)--(2.2,1.2);       
                 \draw[dotted](2.2,-0.5)--(4.2,1.5);

        \node [draw=white, fill=white] (a) at (2,-1.8) {(b)};
      \end{tikzpicture} 
      \hspace{0.1cm}   
    \begin{tikzpicture}[very thick,scale=0.7]
\tikzstyle{every node}=[circle, draw=black, fill=white, inner sep=0pt, minimum width=4pt];
   \node [draw=white, fill=white] (a) at (-0.7,-0.8) {\tiny $\ell_20$};   
                \node [draw=white, fill=white] (a) at (-1.3,0.2) {\tiny $\ell_1\star$};
   \node [draw=white, fill=white] (a) at (2.8,1.8) {\tiny $\ell_21$};   

     \node [draw=white, fill=white] (a) at (0.1,0.4) {\tiny $p_10$};
              \path (0,0) node (p1) {} ;
       
           \draw(-2,-0.5)--(1.5,-0.5);    
             \draw(-2,-1)--(2,3);   
             
               \draw[dotted](p1)--(0,-0.5);  
                 \draw[dotted](p1)--(-0.5,0.5);  
                           
                   \node [draw=white, fill=white] (a) at (2.3,2.3) {\tiny $p_11$};          
        \path (2,2) node (p2) {} ;
       
           \draw(-0,1.5)--(3.5,1.5);    
              
              \draw[dotted](p1)--(p2); 
               \draw[dotted](p2)--(2,1.5);  
                 \draw[dotted](p2)--(1.5,2.5);  
                 
               \draw[dotted](0.2,-0.5)--(2.2,1.5);   
        \node [draw=white, fill=white] (a) at (0,-1.8) {(c)};
      \end{tikzpicture} 
      
           \vspace{0.2cm}   
    \begin{tikzpicture}[very thick,scale=0.7]
\tikzstyle{every node}=[circle, draw=black, fill=white, inner sep=0pt, minimum width=4pt];
     \node [draw=white, fill=white] (a) at (0.7,-0.8) {\tiny $\ell_20\star$};   
                \node [draw=white, fill=white] (a) at (-1.5,0.2) {\tiny $\ell_1\star 0$};
   \node [draw=white, fill=white] (a) at (3.8,1.7) {\tiny $\ell_21\star$};  
      \node [draw=white, fill=white] (a) at (2.7,0.2) {\tiny $\ell_1\star 1$};
  
     \node [draw=white, fill=white] (a) at (0.1,0.4) {\tiny $p_100$};
              \path (0,0) node (p1) {} ;
       
           \draw(-2,-0.5)--(6,-0.5);    
             \draw(-2,-1)--(2,3);   
             
               \draw[dotted](p1)--(0,-0.5);  
                 \draw[dotted](p1)--(-0.5,0.5);  
                           
                   \node [draw=white, fill=white] (a) at (2.3,2.3) {\tiny $p_110$};          
        \path (2,2) node (p2) {} ;
       
           \draw(-0,1.5)--(7,1.5);    
              
              \draw[dotted](p1)--(p2); 
               \draw[dotted](p2)--(2,1.5);  
                 \draw[dotted](p2)--(1.5,2.5);  
                 
               \draw[dotted](1.2,-0.5)--(3.2,1.5);

          \node [draw=white, fill=white] (a) at (4.8,0.1) {\tiny $p_101$};
              \path (4,0) node (p3) {} ;

             \draw(2,-1)--(6,3);   
             
               \draw[dotted](p3)--(4,-0.5);  
                 \draw[dotted](p3)--(3.5,0.5);  
                           
                   \node [draw=white, fill=white] (a) at (6.3,2.3) {\tiny $p_111$};          
        \path (6,2) node (p4) {} ;
       
           \draw(-0,1.5)--(3.5,1.5);    
              
              \draw[dotted](p3)--(p4); 
               \draw[dotted](p4)--(6,1.5);  
                 \draw[dotted](p4)--(5.5,2.5);  
                 
             \draw[dotted](p3)--(p1);        
                \draw[dotted](p2)--(p4); 
                
                 \draw[dotted](0.2,1.2)--(4.2,1.2);   
        \node [draw=white, fill=white] (a) at (2,-1.8) {(d)};
      \end{tikzpicture}
     \end{center}
     \vspace{-0.5cm}
\caption{Point-line frameworks  in the plane, with all types of constraints shown as dotted lines. The underlying graph of the framework in (a)-(d) is the respective graph in (a)-(d) shown in Figure~\ref{fig:exptlngr}. The frameworks in (b) and (c) have ($1$-fold) extrusion symmetry and are obtained by extruding the framework in (a). The framework in (c) is obtained by extruding the framework in (a) along the direction of the line $\ell_1$. The framework in (d)  has $2$-fold extrusion symmetry and is obtained from the one in (c) by extruding in the direction of the lines $\ell_20$ and $\ell_21$.}
\label{fig:exptlngrfw}
\end{figure}
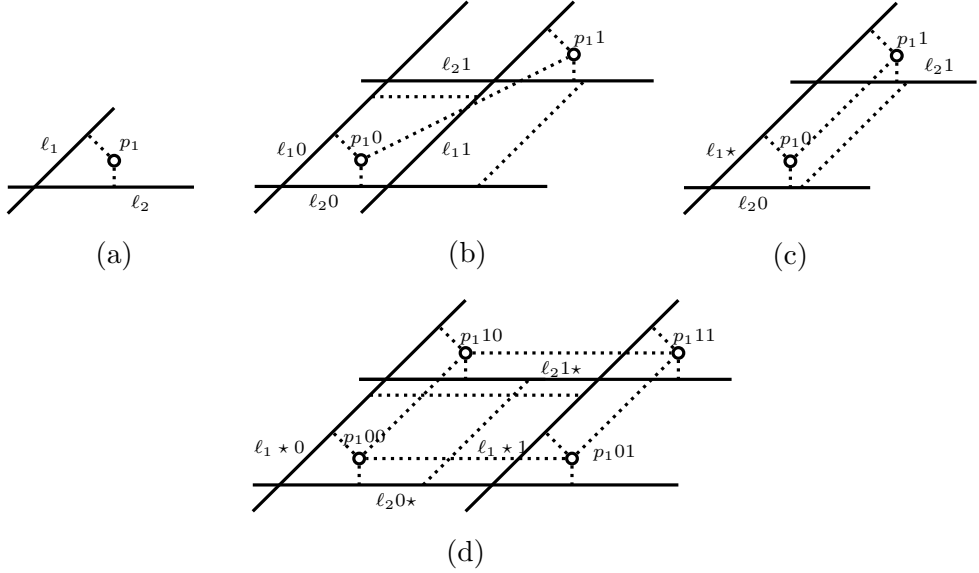

\subsection{Point-hyperplane frameworks with no edge $\{i,j\}$ in $E_{\mathcal P \mathcal H}$ for which $j$ is fixed by an extrusion}
\label{subsec:blockdecompthyper}
In this section we show that the rigidity matrix of a point-hyperplane framework with $t$-fold extrusion symmetry  can be transformed into a block-diagonalised form using the same approach as in Section~\ref{subsec:blockdecom}, provided that there is no point-hyperplane distance constraint where the corresponding hyperplane is fixed by an extrusion.

Let $G=H\square_{F_1,\ldots, F_t} K_2^{\square t}=(V,E)$ and let $(G,p,\ell)$ be a point-hyperplane framework in $\mathbb{R}^d$ with $t$-fold extrusion symmetry and extrusion directions $\tau_1,\ldots, \tau_t$. Throughout this subsection we assume that whenever there is a vertex $(w,\be)\in V_{\mathcal H}$ with $w\in F_h$ for some $h\in\{1,\ldots, t\}$ (i.e., by Definition~\ref{def:extrsymph}, the hyperplane corresponding to $(w,\be)$ contains the extrusion direction $\tau_h$), then there is no edge in $E_{\mathcal P \mathcal H}$ that is incident to $(w,\be)$ (i.e. there is no point whose distance to the hyperplane corresponding to $(w,\be)$ is fixed).
We let $\Gamma=\mathbb{Z}_2^t$, and as before we denote $\theta(\gamma)(i)$ by $\gamma i$ for $\gamma\in \Gamma$ and $i\in V$.

 Recall from Section~\ref{sec:parcon} that a hyperplane-hyperplane parallel constraint is different from  an angle constraint.% angle constraint becomes singular and hence takes on a different form if the hyperplanes are parallel to each other. 
 Thus, in the following, we will distinguish between edges corresponding to angle constraints and edges corresponding to (sets of) parallel constraints and denote them (as before) by $E_{\mathcal{HH}\not\parallel}$ and $E_{\mathcal H \mathcal H\parallel}$, respectively.
 
 Let $\gamma\in \Gamma$, and let $e=\{j,l\}$ be an edge of $G$ in  $E_{\mathcal H \mathcal H\parallel}$ with $j$ and $l$ of the form $j=(w,\bm e)$ and $l=(w,\bm e')$, where $w\in V_{\mathcal{H}}(H)$ and $\be, \bm e'\in \{0,1,\star\}^t$. Note that $\be$ and $\bm e'$ must differ in exactly one entry, say the $h$-th entry. Suppose $\be_h=0$ and $\be'_h=1$. Then we use the convention that any row in the rigidity matrix corresponding to the parallel constraint given by $e$ (there are $d-1$ of them in dimension $d$) will have  $(\alpha,0)$ for some $\alpha\in \mathbb{R}^d$ in the $d+1$ columns corresponding to $j$ and $(-\alpha,0)$ in the $d+1$ columns corresponding to $l$, and zeros elsewhere.
 
 In the following we assume that the vertices and edges of $G$ are ordered so that in  the rigidity matrix $R(G,p,\ell)$ the columns corresponding to the vertices in $V_{\mathcal P}$ are followed by the columns corresponding to the vertices  in $V_{\mathcal H}$, and the rows corresponding to the edges in $E_{\mathcal P \mathcal P}$ are followed by the rows corresponding to the edges in $E_{\mathcal P \mathcal H}$, which  in turn are followed by the rows corresponding to the edges in $E_{\mathcal H \mathcal H\not\parallel}$ and $E_{\mathcal H \mathcal H\parallel}$. The final set of columns of $R(G,p,\ell)$ correspond to the vertices in $V_{\mathcal H}$.

We need to define the analogues of the internal and external representation of $\Gamma$. 

The \emph{internal representation} of $\Gamma$ (with respect to $G=H\square_{F_1,\ldots, F_t} K_2^{\square t}$) is the linear representation $P'_E:\Gamma\to GL(\mathbb{R}^{|E|})$ defined by 
$$P'_E=P'_{E_{\mathcal P \mathcal P}}\oplus P_{E_{\mathcal P \mathcal H}}\oplus P_{E_{\mathcal H \mathcal H \not\parallel}}\oplus (P'_{E_{\mathcal H \mathcal H \parallel}}\otimes I_{d-1})\oplus P_{V_{\mathcal H}},$$
where $P'_{E_{\mathcal P \mathcal P}}$ is the internal representation of the subgraph of $G$ induced by $V_{\mathcal P}$, as defined in Section~\ref{subsec:blockdecom}, and $P_{E_{\mathcal P \mathcal H}}$ and $P_{E_{\mathcal H \mathcal H\not\parallel}}$ are the permutation representations for the edge sets $E_{\mathcal P \mathcal H}$ and  $E_{\mathcal H \mathcal H\not\parallel}$, respectively.

The representation $P'_{E_{\mathcal H \mathcal H\parallel}}$ is obtained from the permutation representation $P_{E_{\mathcal H \mathcal H\parallel}}$ of the edge set $E_{\mathcal H \mathcal H\parallel}$ in the analogous way as $P'_{E_{\mathcal P \mathcal P}}$ is obtained from $P_{E_{\mathcal P \mathcal P}}$. 
Let $\gamma\in \Gamma$, and let $e=\{j,l\}$ be an edge of $G$ in  $E_{\mathcal H \mathcal H\parallel}$ with $j$ and $l$ of the form $j=(w,\bm e)$ and $l=(w,\bm e')$, where $w\in V_{\mathcal{H}}(H)$ and $\be, \bm e'\in \{0,1,\star\}^t$. Moreover, let $\gamma e=f$. The vectors $\be$ and $\bm e'$ differ in exactly one entry, say the $h$-th entry. We replace the $1$ in the $(e,f)$-th and the $(f,e)$-th entry of $P_{E_{\mathcal H \mathcal H\parallel}}(\gamma)$ by $-1$ if and only if $\gamma$ has a $1$ in the $h$-th entry. We obtain $P'_{E_{\mathcal H \mathcal H\parallel}}(\gamma)$ by applying this procedure to every edge of  $E_{\mathcal H \mathcal H\parallel}$.

Note that for $\gamma\neq \textrm{id}=(0,\ldots, 0)$, an edge $e=\{j,l\}$ of $E_{\mathcal H \mathcal H\parallel}$ makes a contribution of $1$ to the diagonal of $P_{E_{\mathcal H \mathcal H\parallel}}(\gamma)$ if and only if $e$ is \emph{fixed} by $\gamma$, i.e. if either $\gamma j=l$ and $\gamma l=j$ or $\gamma j=j$ and $\gamma l=l$. 
 Moreover, $e=\{j,l\}$ is fixed by $\gamma$ if and only if $j$ and $l$ are of the form $j=(w,\bm e)$ and $l=(w,\bm e')$, with $w\in V_{\mathcal{H}}(H)$ and $\be, \bm e'\in \{0,1,\star\}^t$ differing only in the $h$-th entry, and $\gamma$ has a zero in all entries, except possibly in the $h$-th entry and the entries where $\bm e$ and $\bm e'$ have a $\star$. 
If $\gamma$ has a $1$ in the $h$-th entry (i.e. the hyperplanes of $j$ and $l$ are swapped by $\gamma$), then the
$1$ in the diagonal of the matrix $P_{E_{\mathcal H \mathcal H\parallel}}(\gamma)$ is replaced by a $-1$ in $P'_{E_{\mathcal H \mathcal H\parallel}}$. If $\gamma$ has a $0$ in the $h$-th entry (i.e. the hyperplanes of $j$ and $l$ are fixed by $\gamma$), then the
$1$ in the diagonal of the matrix $P_{E_{\mathcal H \mathcal H\parallel}}(\gamma)$ is kept in $P'_{E_{\mathcal H \mathcal H\parallel}}(\gamma)$.

 It is straightforward to check that $P'_{E_{\mathcal H \mathcal H\parallel}}$ is indeed a linear representation. In $P'_E$, we take the Kronecker product of  $P'_{E_{\mathcal H \mathcal H\parallel}}$ and the identity matrix $I_{d-1}$, since each parallel constraint between two hyperplanes in $d$-space gives $d-1$ rows in the rigidity matrix, as described in Section~\ref{sec:parcon}.

Finally, $P_{V_{\mathcal H}}$ is the  vertex permutation representation of $V_{\mathcal H}$.

The \emph{external representation} of $\Gamma$ (with respect to $G=H\square_{F_1,\ldots, F_t} K_2^{\square t}$) is the linear representation $$P_V'=(P_{V_{\mathcal P}}\otimes I_d)\oplus (P_{V_{\mathcal H}}'):\Gamma\to GL(\mathbb{R}^{d|V_{\mathcal P}|+(d+1)|V_{\mathcal H}|}),$$
where $P_{V_{\mathcal P}}$ is the vertex permutation representation of $V_{\mathcal P}$, and $P_{V_{\mathcal H}}':\Gamma \to  GL(\mathbb{R}^{(d+1)|V_{\mathcal H}|})$ is defined as follows.

For $\gamma\in \Gamma$, the matrix $P_{V_{\mathcal H}}'(\gamma)$ is obtained from the vertex permutation matrix $P_{V_{\mathcal H}}(\gamma)=[\delta_{i,\gamma j}]_{i,j}$ (where $\delta$ denotes the Kronecker delta symbol) by replacing each $0$ with a $(d+1)\times (d+1)$ zero-matrix, and each $1$ in entry $(i,j)$ by the matrix 
$$ 
\begin{pmatrix}
  \begin{matrix}
   &  \\
   & I_d  \\
   &\\
  \end{matrix}
  &  \vline  \mathbf{0}   \\
\hline 
   \begin{matrix}
   & -\tau_{\gamma}(i)^T  \\
  \end{matrix}
   &  \vline  1   \\
\end{pmatrix}
$$
It is straightforward to check that $P_{V_{\mathcal H}}'$ is a group representation of $\Gamma$.

Let $G=H\square_{F_1,\ldots, F_t} K_2^{\square t}=(V,E)$ and let $(G,p,\ell)$ be a point-hyperplane framework in $\mathbb{R}^d$ with $t$-fold extrusion symmetry and extrusion directions $\tau_1,\ldots, \tau_t$. In the following we assume that whenever there is a vertex $(w,\be)\in V_{\mathcal H}$ with $w\in F_h$ for some $h\in\{1,\ldots, t\}$ (i.e. the hyperplane corresponding to $(w,\be)$ contains the extrusion direction $\tau_h$), then there is no edge in $E_{\mathcal P \mathcal H}$ that is incident to $(w,\be)$ (i.e. there is no point whose distance to the hyperplane corresponding to $(w,\be)$ is fixed).

\begin{thm}
\label{thm:blockph}
Let $G=H\square_{F_1,\ldots, F_t} K_2^{\square t}$ and $(G,p,\ell)$ be  a point-hyperplane framework in $\mathbb{R}^d$ with $t$-fold extrusion symmetry and extrusion directions $\tau_1,\ldots, \tau_t$. Further, let $\theta:\Gamma\to \textrm{Aut}(G)$ be the extrusion action. Suppose there is no edge $\{i,j\}\in E_{\mathcal P \mathcal H}$ where the hyperplane corresponding to $j$ contains an extrusion direction.  
Then $$R(G,p,\ell)\in {\rm Hom}_{\Gamma}(P_{V}',P'_E).$$
\end{thm}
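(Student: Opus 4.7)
The plan is to mimic the row-by-row verification in the proof of Theorem~\ref{thm:block}, exploiting the fact that $P'_V$ and $P'_E$ are both built as direct sums indexed by the vertex types $V_{\mathcal P},V_{\mathcal H}$ and the edge types $E_{\mathcal P\mathcal P},E_{\mathcal P\mathcal H},E_{\mathcal H\mathcal H\not\parallel},E_{\mathcal H\mathcal H\parallel}$, together with the normalisation block on $V_{\mathcal H}$. This block structure lets me check the intertwining identity $R(G,p,\ell)\,P'_V(\gamma)=P'_E(\gamma)\,R(G,p,\ell)$ one row-block at a time, for each $\gamma\in\Gamma$. The key explicit formulas I will use are $(P'_V(\gamma)\dot x)_q=\dot p_{\gamma q}$ for $q\in V_{\mathcal P}$ and $(P'_V(\gamma)\dot x)_l=(\dot a_{\gamma l},\,\dot r_{\gamma l}-\langle\tau_\gamma(l),\dot a_{\gamma l}\rangle)$ for $l\in V_{\mathcal H}$, combined with the identities $a_{\gamma u}=a_u$ (Definition~\ref{def:extrsymph}(ii)), $p_{\gamma u}=p_u+\tau_\gamma(u)$, and $\tau_\gamma(\gamma u)=-\tau_\gamma(u)$; the last is immediate from Definition~\ref{def:extrsymph} by tracking how the index sets $X,Y$ swap when $\bm e$ is replaced by $\bm e+\gamma$.

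I will dispose of the easier blocks first. The rows indexed by $E_{\mathcal P\mathcal P}$ reduce directly to Theorem~\ref{thm:block}, since these rows vanish on the hyperplane columns and the restriction of $P'_V$ to $V_{\mathcal P}$ coincides with $P_{V_{\mathcal P}}\otimes I_d$. The normalisation rows indexed by $V_{\mathcal H}$ have only $(a_i,0)$ in the columns for $i$, so the non-permutation correction in $P'_{V_{\mathcal H}}$ does not contribute, and $a_{\gamma i}=a_i$ gives the required equality directly. The rows indexed by $E_{\mathcal H\mathcal H\not\parallel}$ are handled analogously: again all $\dot r$-columns are zero, and the invariance of the hyperplane normals takes care of the $\dot a$-contributions.

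For the parallel block $E_{\mathcal H\mathcal H\parallel}$, the subtle case is an edge $\{j,j'\}$ with $j=(w,\bm e)$ and $j'=(w,\bm e')$ that is fixed by $\gamma$. If $\gamma$ swaps $j$ and $j'$, then $\gamma_h=1$ at the coordinate $h$ where $\bm e$ and $\bm e'$ differ, and the definition of $P'_{E_{\mathcal H\mathcal H\parallel}}$ assigns $-1$ to the diagonal entry; on the other side, swapping $\dot a_j$ and $\dot a_{j'}$ in the parallel-constraint row, together with the identities $a_j\parallel a_{j'}$ and $a_{\gamma j}=a_j$ forcing $a_j=a_{j'}$ (and hence $a_j^\perp=a_{j'}^\perp$ in $d=2$, or the same choice of orthonormal complement in $d=3$), produces exactly the same sign change. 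If instead $\gamma$ fixes $j$ and $j'$ individually, then $\gamma_h=0$ at the differing coordinate, no sign is introduced in $P'_{E_{\mathcal H\mathcal H\parallel}}$, and both sides of the intertwining identity return the original entry unchanged. The same analysis applies to each of the $d-1$ parallel-constraint rows coming from $P'_{E_{\mathcal H\mathcal H\parallel}}\otimes I_{d-1}$.

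The main obstacle is the $E_{\mathcal P\mathcal H}$ block, which is precisely where the hypothesis of the theorem is used. Writing $\{q,l\}=\{\gamma i,\gamma j\}$ for $\{i,j\}\in E_{\mathcal P\mathcal H}$ and expanding $(R(G,p,\ell)\,P'_V(\gamma)\dot x)_{\{q,l\}}$ via the formulas above, the substitutions $a_l=a_j$, $p_q=p_i+\tau_\gamma(i)$, and $\dot r'_l=\dot r_j-\langle\tau_\gamma(l),\dot a_j\rangle$ collapse everything to $(R(G,p,\ell)\dot x)_{\{i,j\}}$ plus the residual term $\langle\tau_\gamma(i)+\tau_\gamma(l),\dot a_j\rangle=\langle\tau_\gamma(i)-\tau_\gamma(j),\dot a_j\rangle$. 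Since every edge in $E_{\mathcal P\mathcal H}$ arises from the extrusion of an $H$-edge $\{v,w\}$ with $v\in V_{\mathcal P}(H)$ and $w\in V_{\mathcal H}(H)$, the $\{0,1,\star\}^t$-coordinates of $i$ and $j$ agree except possibly in those positions $h$ for which $w\in F_h$, where $j$ carries a $\star$ and $i$ carries a $0$ or $1$. The hypothesis excludes exactly these edges, so no $\star$'s appear on $j$, $\bm e^{(i)}=\bm e^{(j)}$, and hence $\tau_\gamma(i)=\tau_\gamma(j)$, making the residual term vanish. This establishes the intertwining identity on the last row-block and completes the proof.
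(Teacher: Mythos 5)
Your proposal is correct and follows essentially the same route as the paper's proof: a row-block-by-row-block verification of the intertwining identity over the five constraint types, with the $E_{\mathcal P\mathcal P}$ rows reduced to Theorem~\ref{thm:block}, the sign analysis on the fixed parallel edges, and the residual term $\langle\tau_\gamma(i)-\tau_\gamma(j),\dot a_j\rangle$ in the $E_{\mathcal P\mathcal H}$ block killed by exactly the hypothesis of the theorem. No gaps.
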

\begin{proof}  Suppose we have $R(G,p,\ell)(\dot p, \dot \ell)=s$, where $\dot \ell=(\dot a,\dot r)$. Then we need to show that $$R(G,p,\ell)\big((P_{V_{\mathcal P}}\otimes I_d)\oplus (P_{V_{\mathcal H}}')\big)(\gamma) (\dot p,\dot \ell) = P'_E(\gamma)s.$$

Fix $\gamma\in \Gamma$ and let $e=\{i,j\}\in E$. Suppose that $\gamma i =q$ and $\gamma j =l$, and hence $P_E(\gamma)(\{i,j\})=\{q,l\}$.
We now consider each type of edge in turn. 

\textbf{Type 1 ($\mathcal{PP}$):}  If $\{i,j\}\in E_{\mathcal P \mathcal P}$ then so is $\{q,l\}$ and we have 
 $$(P'_E(\gamma)s)_{\{q,l\}}=\big(R(G,p)(P_V')(\gamma) (\dot p, \dot \ell)\big)_{\{q,l\}}$$
 by the proof of Theorem~\ref{thm:block}.

\textbf{Type 2 ($\mathcal{PH}$):}
Suppose  that $\{i,j\}\in E_{\mathcal P \mathcal H}$, with $i\in V_{\mathcal P}$ and $j\in V_{\mathcal H}$. Then we have  $ a_l= a_j$ and $p_q=p_i+\tau_{\gamma}(i)$.   
Moreover, note that if   $j=(w_j,\bm e)$ and $i=(v_i,\bm e')$, then, since $\{i,j\}\in E_{\mathcal P \mathcal H}$ and since $w_j\neq F_h$ for all $h=1,\ldots, t$, the vectors $\bm e $ and $\bm e'$ are equal. Thus, by Definition~\ref{def:extrsymph}, we have $\tau_\gamma(j)=\tau_\gamma(i)$.

By definition of $P'_E$ and the definition of the point-hyperplane rigidity matrix, for the $\{q,l\}$-th component $(P'_E(\gamma)s)_{\{q,l\}}$ of the column vector $P'_E(\gamma)s$, we have:
$$(P'_E(\gamma)s)_{\{q,l\}}= s_{\{i,j\}}= \langle a_j,\dot p_i\rangle+ \langle(p_i,-1),\dot \ell_j \rangle.$$

Next we consider $\big(R(G,p,\ell)(P_V')(\gamma) ((\dot p, \dot \ell))\big)_{\{q,l\}}$ and show that it is equal to $(P'_E(\gamma)s)_{\{q,l\}}$.

Note that $(R(G,p,\ell)((\dot p, \dot \ell)))_{\{q,\ell\}}=\langle a_l,\dot p_q\rangle+ \langle (p_q,-1),\dot \ell_l\rangle $. Thus, by the definition of $P_{V}'$, we have 
\begin{eqnarray*}\big(R(G,p,\ell)(P_V')(\gamma) ((\dot p, \dot \ell))\big)_{\{q,l\}}&=& \langle a_l, \dot p_i\rangle + \langle (p_q,-1),(\dot a_j,\langle -\tau_{\gamma}(l),\dot a_j\rangle+\dot r_j)\rangle\\
 &=& \langle a_j, \dot p_i \rangle+ \langle p_q, \dot a_j \rangle + \langle \tau_{\gamma}(l), \dot a_j\rangle-\dot r_j\\
  &=& \langle  a_j, \dot p_i \rangle + \langle p_q+\tau_{\gamma}(l), \dot a_j \rangle -\dot r_j\\
  &=& \langle  a_j, \dot p_i \rangle + \langle p_i+\tau_{\gamma}(i)+\tau_{\gamma}(l), \dot a_j \rangle -\dot r_j\\
  \end{eqnarray*}
 Now, since $\gamma j=l$ and $\gamma l=j$ we have $\tau_\gamma(l)=-\tau_\gamma(j)$.  Thus, since $\tau_\gamma(j)=\tau_\gamma(i)$, 
  the above equals
$$  \langle  a_j, \dot p_i \rangle + \langle p_i, \dot a_j \rangle -\dot r_j= \langle a_j,\dot p_i\rangle+ \langle(p_i,-1),\dot \ell_j \rangle,
$$
as desired.
 
 \textbf{Type 3 ($\mathcal{HH}\not\parallel)$:}
Suppose that $\{i,j\}\in E_{\mathcal H \mathcal H}$. Then we have $ a_q= a_i$ and $a_l= a_j$. By definition of $P'_E$ and the definition of the point-hyperplane rigidity matrix, for the $\{q,l\}$-th component $(P'_E(\gamma)s)_{\{q,l\}}$ of the  column vector $P'_E(\gamma)s$, we have:
$$(P'_E(\gamma)s)_{\{q,l\}}= s_{\{i,j\}}= \langle (a_j,0),\dot \ell_i\rangle+ \langle (a_i,0),\dot \ell_j\rangle.$$
Next we consider $\big(R(G,p,\ell)(P_V')(\gamma) ((\dot p, \dot \ell))\big)_{\{q,l\}}$ and show that it is equal to $(P'_E(\gamma)s)_{\{q,l\}}$.

Note that $(R(G,p,\ell)((\dot p, \dot \ell)))_{\{q,l\}}= \langle (a_l,0),\dot \ell_q\rangle+ \langle (a_q,0),\dot \ell_l\rangle$. Thus, by definition of  $P_{V}'$, we have 
\begin{eqnarray*}
(R(G,p,\ell)(P_V')(\gamma)((\dot p, \dot \ell)))_{\{q,l\}}&=&  \langle (a_l,0),(\dot a_i,\langle-\tau_{\gamma}(q),\dot a_i \rangle+\dot r_i)\rangle+ \langle (a_q,0),(\dot a_j,\langle-\tau_{\gamma}(l),\dot a_j \rangle+\dot r_j)\rangle\\
&=& \langle (a_l,0),\dot \ell_i\rangle+ \langle (a_q,0),\dot \ell_j\rangle\\
&=& \langle (a_j,0),\dot \ell_i\rangle+ \langle (a_i,0),\dot \ell_j\rangle,
  \end{eqnarray*}
  as desired.
  
 \textbf{Type 4 ($\mathcal{HH}\parallel$):} 
  Suppose that $\{i,j\}\in E_{\mathcal H \mathcal H\parallel}$. Then there are $d-1$ rows in the rigidity matrix corresponding to this edge. Since these rows  all have the same basic form, namely $$(0,\ldots, 0, (\alpha,0) , 0 ,\ldots, 0, (-\alpha,0), 0, \ldots, 0)$$ for some $\alpha\in \mathbb{R}^d$, without loss of generality we will just focus on the first of these rows. We have $ a_i= a_j=a_q= a_l$. By definition of $P'_E$, for the $\{q,l\}$-th component $(P'_E(\gamma)s)_{\{q,l\}}$ of the  column vector $P'_E(\gamma)s$, we have:
$$(P'_E(\gamma)s)_{\{q,l\}}=  
\begin{cases}
   -s_{\{i,j\}} ,& \text{if } i=(w,\be) \textrm{ and } j=(w,\bm e'), \be_h\neq \bm e'_h \textrm{ and } \gamma_h=1\\
    s_{\{i,j\}},              & \text{otherwise}
\end{cases}
$$
Let  $i=(w,\be)$  and $j=(w,\bm e')$, where $\be_h\neq \bm e'_h$. Without loss of generality we assume  that  $\be_h=0$ and  $\bm e'_h=1$. 

Suppose first that   $\gamma_h=1$. 
 Then 
$$(P'_E(\gamma)s)_{\{q,l\}}= -s_{\{i,j\}}= \langle (-\alpha,0),\dot \ell_i\rangle+ \langle (\alpha,0),\dot \ell_j\rangle.$$

Next we consider $(R(G,p)(P_V\otimes I_d)(\gamma) (\dot p,\dot \ell))_{\{q,l\}}$ and show that it is equal to $(P'_E(\gamma)s)_{\{q,l\}}$.
We have
 $(R(G,p) (\dot p,\dot \ell))_{\{q,l\}}= \langle(-\alpha,0),\dot \ell_q\rangle+\langle(\alpha,0),\dot \ell_l\rangle$. Therefore, by the definition of $P_V\otimes I_d$, we have
\begin{eqnarray*}(R(G,p)(P_V\otimes I_d)(\gamma)  (\dot p,\dot \ell))_{\{q,l\}}&=&  \langle (-\alpha,0),(\dot a_i,\langle-\tau_{\gamma}(q),\dot a_i \rangle+\dot r_i)\rangle+ \langle (\alpha,0),(\dot a_j,\langle-\tau_{\gamma}(l),\dot a_j \rangle+\dot r_j)\rangle\\
&=&\langle(-\alpha,0),\dot \ell_i\rangle+\langle(\alpha,0),\dot \ell_j\rangle\end{eqnarray*}
as claimed.

Suppose next that  $\gamma_h=0$.  Then 
$$(P'_E(\gamma)s)_{\{q,l\}}= s_{\{i,j\}}= \langle (\alpha,0),\dot \ell_i\rangle+ \langle (-\alpha,0),\dot \ell_j\rangle.$$
We have
 $(R(G,p) (\dot p,\dot \ell))_{\{q,l\}}= \langle(\alpha,0),\dot \ell_q\rangle+\langle(-\alpha,0),\dot \ell_l\rangle$. Therefore, by the definition of $P_V\otimes I_d$, we have
\begin{eqnarray*}(R(G,p)(P_V\otimes I_d)(\gamma)  (\dot p,\dot \ell))_{\{q,l\}}&=&  \langle (\alpha,0),(\dot a_i,\langle-\tau_{\gamma}(q),\dot a_i \rangle+\dot r_i)\rangle+ \langle (-\alpha,0),(\dot a_j,\langle-\tau_{\gamma}(l),\dot a_j \rangle+\dot r_j)\rangle\\
&=&\langle(\alpha,0),\dot \ell_i\rangle+\langle(-\alpha,0),\dot \ell_j\rangle\end{eqnarray*}
as claimed.

\textbf{Type 5 ($V_{\mathcal H}$):} Finally, we consider the rows of the point-hyperplane rigidity matrix that correspond to the vertices in $V_{\mathcal H}$.  Suppose that $i\in V_{\mathcal H}$. Then $ a_q= a_i$. By the definition of $P'_E$, for the  component $(P'_E(\gamma)s)_{q}$ of the  column vector $P'_E(\gamma)s$ corresponding to $q$, we have:
$$(P'_E(\gamma)s)_{q}= s_{i}=\langle (a_i,0),\dot \ell_i\rangle.$$
Note that $(R(G,p,\ell)((\dot p, \dot \ell)))_{q}= \langle (a_q,0),\dot \ell_q\rangle$. Thus, by definition of  $P_{V}'$, we have
$$
(R(G,p,\ell)(P_V')(\gamma)((\dot p, \dot \ell)))_{q}=\langle (a_q,0),(\dot a_i,\langle-\tau_{\gamma}(q),\dot a_i \rangle+\dot r_i)\rangle=\langle (a_i,0),\dot \ell_i)\rangle
$$
This completes the proof.
\end{proof}

By the same reasoning as in Section~\ref{subsec:blockdecom}, it follows from Theorem~\ref{thm:blockph} that the rigidity matrix of a point-hyperplane framework can be block-decomposed, with each block corresponding to an irreducible representation of the group.

 \begin{cor} \label{cor:blockph} Let $G=H\square_{F_1,\ldots, F_t} K_2^{\square t}$ and let $(G,p,\ell)$ be  a point-hyperplane framework in $\mathbb{R}^d$ with $t$-fold extrusion symmetry. Suppose there is no edge $\{i,j\}\in E_{\mathcal P \mathcal H}$ where the hyperplane corresponding to $j$ contains an extrusion direction.  
 Then there exist non-singular matrices $A$ and $B$ such that $B^T R(G,p,\ell) A$ is block-decomposed as
\begin{equation}
\label{rigblocks2}
B^{\top}R(G,p,\ell)A:=\widetilde{R}(G,p,\ell)
=\left(\begin{array}{ccc}\widetilde{R}_{0}(G,p,\ell) & & \mathbf{0}\\ & \ddots & \\\mathbf{0} &  &
\widetilde{R}_{r-1}(G,p,\ell) \end{array}\right)\textrm{,}
\end{equation}
where the submatrix block $\widetilde{R}_i(G,p,\ell)$ corresponds to the $i$-th irreducible representation of  $\Gamma$.
\end{cor}

This block-decomposition corresponds to a decomposition of the space $\mathbb{R}^{|E|+|V_{\mathcal H}|}$ and the space $\mathbb{R}^{d|V_{\mathcal{P}}|+(d+1)|V_{\mathcal{H}}|}$ into a direct sum of subspaces. We use the analogous terminology for these subspaces and their elements as in Section~\ref{subsec:blockdecom}.

\subsection{Point-hyperplane frameworks containing an edge $\{i,j\}$ in $E_{\mathcal P \mathcal H}$ for which $j$ is fixed by an extrusion}\label{pinh} Let $G=H\square_{F_1,\ldots, F_t} K_2^{\square t}=(V,E)$ %, where $F_i\neq \emptyset$  for some $i\in\{1,\ldots, t\}$ 
and let $(G,p,\ell)$ be a point-hyperplane framework in $\mathbb{R}^d$ with $t$-fold extrusion symmetry and extrusion directions $\tau_1,\ldots, \tau_t$. Then,  by Definition~\ref{def:extrsymph},  the extrusion direction  $\tau_h$ lies within the hyperplane associated with the vertex $(w_j,\bm e)\in V_{\mathcal{H}}$ if   $w_j\in F_h$.

Let $w_j\in F_{h_1}\cap\cdots \cap F_{h_c}$ for $h_1,\ldots, h_c\in \{1,\ldots t\}$, and suppose there exists a point-hyperplane constraint between the vertex $(w_j,\bm e)\in V_{\mathcal{H}}$   and the vertex $(v_i,\bm e')\in V_{\mathcal{P}}$. Then Theorem~\ref{thm:blockph} no longer holds, and hence we also do not obtain the desired block-diagonalisation of the rigidity matrix in this case. This is because of the following issue.

For $(w_j,\bm e)$   and $(v_i,\bm e')$, the vectors $\bm e $ and $\bm e'$ are the same except in the positions $h_1,\ldots, h_c$, where $\bm e$ has a $\star$.
 So we have the following relationship for the extrusion directions of $(w_j,\bm e)$   and  $(v_i,\bm e')$ induced by $\gamma$:  $$\tau_{\gamma}((v_i,\bm e'))=\tau_{\gamma}((w_j,\bm e))+\sum_{z\in Z_1}\tau_z-\sum_{z'\in Z_2}\tau_{z'},$$
 where $Z_1$ is the set of indices in $\{1,\ldots, t\}$ for which $\bm e$ has a $\star$, $\bm e' $ has a $0$, and $\gamma$ has a $1$, and   $Z_2$ is the set of indices in $\{1,\ldots, t\}$ for which $\bm e $ has a $\star$, $\bm e' $ has a $1$, and $\gamma$ also has a $1$.  
So we no longer have that $\tau_\gamma((w_j,\bm e))=\tau_\gamma((v_i,\bm e'))$, and hence we do not obtain the desired equality in the  part of the proof of Theorem~\ref{thm:blockph} that deals with point-hyperplane distance constraints (see the final part of the proof for type 2 ($\mathcal{PH}$) edges).

However, we can work around this issue by ``pinning'' hyperplanes in such a way that they are no longer able to rotate.

For a point-line framework with $t$-fold extrusion symmetry in the plane, for example, we may pin one of the lines, say $\ell$, that lies along an extrusion direction by deleting  all three columns for  $\ell$, as well as the row for $\ell$ corresponding to the normalisation constraint (\ref{eq:a_inf_euc}) from the  rigidity matrix. Moreover, for each line that is parallel to $\ell$ we may delete the two columns and one row for the normal vector, but keep the third column, so that each line only retains one degree of freedom --  a parallel displacement. With this pinning in place, the rows in the rigidity matrix that correspond to the parallel constraints of these lines are redundant and can be removed. The resulting pinned point-line framework has ($1$-fold) extrusion symmetry in the direction of $\ell$, and it is easy to see that its rigidity matrix can be block-decomposed as described in Corollary~\ref{cor:blockph}. This is because  the issue in the proof of Theorem~\ref{thm:blockph} described above  has been resolved, as the problematic term involving $\dot{a}_j$ no longer exists for any line $\ell_j$ that lies along the extrusion direction. We will illustrate this in Example~\ref{ex:3}.

A similar pinning procedure can also be employed for point-hyperplane frameworks in higher dimensions, as illustrated in Example~\ref{ex:4}.

\section{Symmetry-adapted mobility counts} \label{sec:fowler-guest}

\subsection{Character counts for bar-joint frameworks}\label{sec:fowler-guest1}

 A map $\omega:E\to \mathbb{R}$ is called a \emph{self-stress} of $(G,p)$ if $$\sum_{j: \{i,j\}\in E} \omega_{ij}(p_i-p_j)=0,$$ where $\omega_{ij}=\omega(\{i,j\})$ for all $\{i,j\}\in E$. In other words, a self-stress is a linear dependence of the rows of the rigidity matrix of $(G,p)$, in the sense that $\omega^TR(G,p)=0$. If a framework does not have any non-zero self-stress, then it is said to be \emph{independent}. A framework $(G,p)$ is \emph{isostatic} if it is infinitesimally rigid and independent.

In the following we assume that $p$ affinely spans at least $\mathbb{R}^{d-1}$.
We let $\Omega(G,p)$ be the space of self-stresses of $(G,p)$, and $\mathcal{F}(G,p)$ be the space of non-trivial infinitesimal motions (or infinitesimal flexes) of $(G,p)$. Moreover, we denote $s=\textrm{dim } \big(\Omega(G,p)\big)$ and $m=\textrm{dim } \big(\mathcal F(G,p)\big)$. It was shown by Maxwell that $$m-s=d|V|-|E|-\binom{d+1}{2}.$$
In particular, if $(G,p)$ is isostatic, then $m=s=0$ and we must have $|E|=d|V|-\binom{d+1}{2}$. Our goal in this section is to refine Maxwell's formula for a  framework $(G,p)$ with $t$-fold extrusion symmetry  using the results from Section~\ref{subsec:blockdecom}. 

We first recall another basic definition from group representation theory. Let $\rho:\Gamma\to GL(X)$ be a linear representation of a group $\Gamma$ and let $Y$ be a $\rho$-invariant subspace of $X$. If for all $\gamma\in \Gamma$, we restrict the automorphism $\rho(\gamma)$ of $X$ to the subspace $Y$, then we obtain a new linear representation $\rho^{(Y)}$ of $\Gamma$ with representation space $Y$. $\rho^{(Y)}$ is said to be a \emph{subrepresentation} of $\rho$.

\begin{prop}\label{proptrrt}
    Let $(G,p)$ be a  $d$-dimensional framework affinely spanning at least $\mathbb{R}^{d-1}$ with $t$-fold extrusion symmetry. Then the space of infinitesimal translations $\mathcal{T}$ of $(G,p)$ (i.e. the space of trivial infinitesimal motions $\dot p:V\to \mathbb{R}^d$ of $(G,p)$ satisfying $\dot p_i=b$ for some $b\in \mathbb{R}^d$ and all $i\in V$)  is a $(P_V\otimes I_d)$-invariant subspace. On the other hand, the space of infinitesimal rotations $\mathcal{R}$ of $(G,p)$ is not a $(P_V\otimes I_d)$-invariant subspace. 
\end{prop}
\begin{proof} By definition  of the external representation, every element in $\mathcal{T}$ is clearly fully-symmetric (recall Definition~\ref{def:sy}).
%By definition of the external representation, it is easy to see that the space of infinitesimal translations $\mathcal{T}$ of $(G,p)$ (i.e. the space of trivial infinitesimal motions $\dot p:V\to \mathbb{R}^d$ of $(G,p)$ satisfying $\dot p_i=t$ for some $t\in \mathbb{R}^d$ and all $i\in V$)  is a $(P_V\otimes I_d)$-invariant subspace.  In fact, by definition, every element in $\mathcal{T}$ is clearly fully-symmetric (recall Definition~\ref{def:sy}). Thus we can form the subrepresentation  $(P_V\otimes I_d)^{(\mathcal{T})}$ with representation space $\mathcal{T}$. 

To see that $\mathcal{R}$ is not a $(P_V\otimes I_d)$-invariant subspace, let $\be_i$ be the $i$-th canonical basis vector of $\mathbb{R}^d$ and consider the basis $\{r_{ij}|\,1\leq i<j\leq d\}$ of $\mathcal{R}$, where for $1\leq i<j\leq d$, $r_{ij}:V\to \mathbb{R}^{d}$ is defined by $r_{ij}(k)=(p_{k})_{i}\be_{j}-(p_{k})_{j}\be_{i}$ for all $k=1,\ldots, |V|$. Since translating a framework does not affect its infinitesimal rigidity properties, we may assume without loss of generality that $p_1$ is the origin and hence every infinitesimal rotation in $\mathcal R$ maps the vertex $1$ to the zero vector (i.e. fixes $p_1$). However, by the definition of the external representation, for every nontrivial $\gamma\in \mathbb{Z}_2^t$, there clearly exists an $r\in \mathcal{R}$ so that $r':=(P_V\otimes I_d)(\gamma)(r)$ satisfies $r'(1)\neq 0$, and hence $r'\notin \mathcal R$. In fact, we just need to choose $r$ so that its rotational axis does not include the extrusion direction of the vertex $1$ induced by $\gamma$.
%On the other hand, the space of infinitesimal rotations $\mathcal{R}$ of $(G,p)$ is not a $(P_V\otimes I_d)$-invariant subspace. To see this, let $e_i$ be the $i$-th canonical basis vector of $\mathbb{R}^d$ and consider the basis $\{r_{ij}|\,1\leq i<j\leq d\}$ of $\mathcal{R}$, where for $1\leq i<j\leq d$, $r_{ij}:V\to \mathbb{R}^{d}$ is defined by $r_{ij}(k)=(p_{k})_{i}e_{j}-(p_{k})_{j}e_{i}$ for all $k=1,\ldots, |V|$. Since translating a framework does not affect its infinitesimal rigidity properties, we may assume without loss of generality that $p_1$ is the origin and hence every infinitesimal rotation in $\mathcal R$ maps the vertex $1$ to the zero vector (i.e. fixes $p_1$). However, by the definition of the external representation, for every nontrivial $\gamma\in \mathbb{Z}_2^t$, there clearly exists an $r\in \mathcal{R}$ so that $r':=(P_V\otimes I_d)(\gamma)(r)$ satisfies $r'(1)\neq 0$, and hence $r'\notin \mathcal R$. In fact, we just need to choose $r$ so that its rotational axis does not include the extrusion direction of the vertex $1$ induced by $\gamma$.
\end{proof}

By Proposition~\ref{proptrrt},  we can form the subrepresentation  $(P_V\otimes I_d)^{(\mathcal{T})}$ with representation space $\mathcal{T}$. 

Since  the space of infinitesimal rotations $\mathcal{R}$ of $(G,p)$ is not a $(P_V\otimes I_d)$-invariant subspace, it follows that the space $\mathcal{J}$ of trivial infinitesimal motions of $(G,p)$ is also not $(P_V\otimes I_d)$-invariant, since $\mathcal{J}=\mathcal{T}\oplus \mathcal{R}$. By Theorem~\ref{thm:block}, the space $\mathcal{M}(G,p)$ of \emph{all} infinitesimal motions of $(G,p)$, however, is a $(P_V\otimes I_d)$-invariant subspace, since it is the kernel of the rigidity matrix of $(G,p)$. So, since $$\mathcal{M}(G,p)= \mathcal{J}\oplus \mathcal{F}(G,p)$$ it follows that $\mathcal{F}(G,p)$ is non-trivial. In other words, as previously observed (recall Remark~\ref{rem:rotmot}), a $d$-dimensional bar-joint framework with $t$-fold extrusion symmetry whose points  affinely span at least $\mathbb{R}^{d-1}$ must always have an infinitesimal flex. In particular, it can never be isotatic.

Similar to the approach in \cite{FGsymmax,BS2}, using the block-decomposition of the rigidity matrix and some basic character theory, we may obtain further information about the infinitesimal flexibility and the self-stresses  of $(G,p)$ via a simple symmetry-adapted count as follows.

The \emph{character} of a linear representation $\rho:\Gamma \to GL(X)$ is the row vector $\chi(\rho)$ whose $i$-th component is the trace of $\rho(\gamma_i)$   for some fixed ordering  of the elements of $\Gamma$.  Then it is well-known from group representation theory that if  $\rho= \alpha_{0}\rho_0\oplus\ldots\oplus \alpha_{r-1}\rho_{r-1}$, where $\alpha_{0},\ldots,\alpha_{r-1}\in \mathbb{N}\cup {\{0\}}$, then for the character of $\rho$ we have $\chi(\rho)= \alpha_{0}\chi(\rho_{0})+\ldots+ \alpha_{r-1}\chi(\rho_{r-1}).$

Now, recall that  $\Gamma=\mathbb{Z}_2^t$  has $r=2^t$ irreducible (one-dimensional) representations (one for each $\gamma\in \Gamma$), defined by
\begin{align*} \label{eq:abelian_rho}
\rho_{{\gamma}}:\Gamma&\rightarrow \{1,-1\}  \nonumber\\
{\gamma'}&\mapsto (-1)^{x'_1x_1+x'_2x_2 + \ldots + x'_tx_t},
\end{align*}
where $\gamma'=(x'_1,\ldots, x'_t)$.
By  (\ref{irrep}), we have $$\chi(P_V\otimes I_d)= \lambda_{0}\chi(\rho_{0})+\ldots+ \lambda_{r-1}\chi(\rho_{r-1}),$$ and by (\ref{irrepHi}) we have
$$\chi(P'_E)= \mu_{0}\chi(\rho_{0})+\ldots + \mu_{r-1}\chi(\rho_{r-1}).$$ Suppose further that 
$$\chi(P_V\otimes I_d)^{(\mathcal{T})}= \nu_{0}\chi(\rho_{0})+\ldots+\nu_{r-1}\chi(\rho_{r-1}).$$ 

Then the following result holds:
\begin{thm}\label{thm:symcount}
    Let $(G,p)$ be a $d$-dimensional bar-joint framework affinely spanning  at least $\mathbb{R}^{d-1}$ with $t$-fold extrusion symmetry. Then, with the notation above, we have the following.
    \begin{itemize}
    \item[(i)] If $\lambda_{\kappa}-\nu_{\kappa}>\mu_{\kappa}$ then $(G,p)$ has an infinitesimal motion that is symmetric with respect to $\rho_{\kappa}$. If $d=2$, this motion must be non-trivial.
    \item[(ii)] If $\lambda_{\kappa}-\nu_{\kappa}<\mu_{\kappa}$, then  $(G,p)$ has a non-zero self-stress that is symmetric with respect to $\rho_{\kappa}$. 
    \end{itemize}
\end{thm}
\begin{proof}
    Since  each irreducible representation of $\Gamma=\mathbb{Z}_2^t$ has a representation space of dimension $1$, for each $0\leq \kappa\leq r-1$, the numbers $\lambda_{\kappa}$, $\mu_{\kappa}$ and $\nu_{\kappa}$ are equal to the dimensions of the  $(P_V\otimes I_d)$-invariant, $P'_{E}$-invariant and $(P_V\otimes I_d)^{(\mathcal{T})}$-invariant vector spaces  corresponding to $\rho_{\kappa}$, respectively. In other words, $\lambda_{\kappa}$ is the dimension of the space of $\rho_{\kappa}$-symmetric vectors in $\mathbb{R}^{d|V|}$, $\mu_{\kappa}$ is the dimension of the space of $\rho_{\kappa}$-symmetric vectors in $\mathbb{R}^{|E|}$ and $\nu_{\kappa}$ is the dimension of the space of $\rho_{\kappa}$-symmetric infinitesimal translations of $(G,p)$. Thus, the   block matrix $\widetilde{R}_{\kappa}(G,p)$ in Corollary~\ref{cor:block} is of size $\mu_{\kappa}\times \lambda_{\kappa}$ and if $\lambda_{\kappa}-\nu_{\kappa}>\mu_{\kappa}$, then there is a non-zero element in the kernel of  $\widetilde{R}_{\kappa}(G,p)$, which is either an infinitesimal rotation or a non-trivial infinitesimal motion. 

    If $(G,p)$ is a framework in $\mathbb{R}^2$, this infinitesimal motion must be non-trivial, since in this case there is only a $1$-dimensional space of infinitesimal rotations, and  the vectors in this space are not $\rho_{\kappa}$-symmetric for any $\kappa$, by Proposition~\ref{proptrrt}.
(For $\mathbb{R}^d$ with $d\geq 3$, this is not true in general, so a more careful analysis of the detected infinitesimal motions is required.)

    Similarly, if $\lambda_{\kappa}-\nu_{\kappa}<\mu_{\kappa}$, then we may conclude that $(G,p)$ has a non-zero self-stress that is symmetric with respect to $\rho_{\kappa}$.    
\end{proof}

%Then, if $\lambda_{\kappa}-\nu_{\kappa}>\mu_{\kappa}$, we may conclude, by Corollary~\ref{cor:block}, that $(G,p)$ has an infinitesimal motion that is symmetric with respect to $\rho_{\kappa}$. 

%If $(G,p)$ is a framework in $\mathbb{R}^2$, this infinitesimal motion must be non-trivial, since none of the infinitesimal rotations are $\rho_{\kappa}$-symmetric for any $\kappa$ in this case. (For $\mathbb{R}^d$ with $d\geq 3$, this is not true in general, so a more careful analysis of the detected infinitesimal motions is required.)

Note that if we detect an infinitesimal flex of symmetry $\rho_{\kappa}$ and another infinitesimal flex of symmetry $\rho_{\kappa'}$, then these flexes may actually be related by an infinitesimal rotation, so that only a one-dimensional space of infinitesimal flexes is detected. (Recall Figure~\ref{fig:simple}.)

%If $\lambda_{\kappa}-\nu_{\kappa}<\mu_{\kappa}$, then we may conclude that $(G,p)$ has a non-zero self-stress that is symmetric with respect to $\rho_{\kappa}$. 

We illustrate the counting method given in Theorem~\ref{thm:symcount} by means of some examples.

\begin{example}\label{ex:1} Consider the framework $(G,p)$ with extrusion symmetry in $\mathbb{R}^2$ shown in Figure~\ref{fig:ex1maxwell}, where $G=K_3\square K_2$. The group $\mathbb{Z}_2=\{0,1\}$ has two irreducible representations $\rho_0$ and $\rho_1$ of dimension 1 whose characters are $\chi(\rho_0)=(1,1)$ and $\chi(\rho_1)=(1,-1)$. Note that $G$ has six vertices and nine edges. Exactly three edges of $G$ are fixed by the non-trivial element $1$ of $\mathbb{Z}_2$, and hence make a contribution of $-1$ to the trace of $P'_E(1)$. Moreover, as mentioned earlier, the infinitesimal translations are all fully-symmetric and form a vector space of dimension $2$. Thus, we have the following character table for $(G,p)$:

\begin{table}[htp]
\begin{center}
\begin{tabular}{l||l|l}
$\mathbb{Z}_{2}$   &   $0$  &  $1$   \\\hline\hline
$\chi(P_V)$  &    6  &  0 \\\hline
$\chi(P_V\otimes I_2)$  &    12  &  0\\\hline
$\chi(P'_E)$  &    9  &  -3 \\\hline
$\chi(P_V\otimes I_2)^{(\mathcal{T})}$  &    2  &  2 \\
\end{tabular}
\end{center}
\end{table}

It follows that $$\chi(P_V\otimes I_2)- \chi(P_V\otimes I_2)^{(\mathcal{T})}= (10,-2)= 4\chi(\rho_0)+6\chi(\rho_1) $$
and 
$$\chi(P'_E)= (9,-3)= 3\chi(\rho_0)+6\chi(\rho_1). $$

\begin{figure}[htp]
\begin{center}
\begin{tikzpicture}[very thick,scale=0.7]
\tikzstyle{every node}=[circle, draw=black, fill=white, inner sep=0pt, minimum width=5pt];
        \path (0,0) node (p1) {} ;
        \path (3,0) node (p2) {} ;
        \path (1.5,1) node (p3) {} ;
        
         \path (0,2) node (p11) {} ;
        \path (3,2) node (p22) {} ;
        \path (1.5,3) node (p33) {} ;
        
        \draw (p1)  --  (p2);
         \draw (p1)  --  (p3);
        \draw (p3)  --  (p2);
        \draw (p11)  --  (p22);
         \draw (p11)  --  (p33);
        \draw (p33)  --  (p22);
        \draw (p1)  --  (p11);
         \draw (p2)  --  (p22);
        \draw (p3)  --  (p33);
        
         \draw[gray,->] (p1)  --  (0.2,-0.6);
        \draw[gray,->] (p2)  --  (3.2,0.6);
        \draw[gray,->] (p3)  --  (1,1);
        
         \draw[gray,->] (p11)  --  (0.2,1.4);
        \draw[gray,->] (p22)  --  (3.2,2.6);
        \draw[gray,->] (p33)  --  (1,3);
        
        \node [draw=white, fill=white] (a) at (1.5,-0.7) {(a)};
      \end{tikzpicture}
      \hspace{0.5cm}
\begin{tikzpicture}[very thick,scale=0.7]
\tikzstyle{every node}=[circle, draw=black, fill=white, inner sep=0pt, minimum width=5pt];
        \path (0,0) node (p1) {} ;
        \path (3,0) node (p2) {} ;
        \path (1.5,1) node (p3) {} ;
        
         \path (0,2) node (p11) {} ;
        \path (3,2) node (p22) {} ;
        \path (1.5,3) node (p33) {} ;
        
        \draw (p1)  --  (p2);
         \draw (p1)  --  (p3);
        \draw (p3)  --  (p2);
        \draw (p11)  --  (p22);
         \draw (p11)  --  (p33);
        \draw (p33)  --  (p22);
        \draw (p1)  --  (p11);
         \draw (p2)  --  (p22);
        \draw (p3)  --  (p33);
        
         \draw[gray,->] (p1)  --  (0.2,-0.6);
        \draw[gray,->] (p2)  --  (3.2,0.6);
        \draw[gray,->] (p3)  --  (1,1);
        
           \draw[gray,->] (p11)  --  (-0.7,1.3);
        \draw[gray,->] (p22)  --  (2.3,2.7);
        \draw[gray,->] (p33)  --  (0.4,3);
        
        \node [draw=white, fill=white] (a) at (1.5,-0.7) 
               {(b)};
               \filldraw[black] (1.5,0.5) circle (1pt); 
      \end{tikzpicture}      
      \hspace{0.5cm}
   \begin{tikzpicture}[very thick,scale=0.7]
\tikzstyle{every node}=[circle, draw=black, fill=white, inner sep=0pt, minimum width=5pt];
        \path (0,0) node (p1) {} ;
        \path (3,0) node (p2) {} ;
        \path (1.5,1) node (p3) {} ;
        
         \path (0,2) node (p11) {} ;
        \path (3,2) node (p22) {} ;
        \path (1.5,3) node (p33) {} ;
        
        \draw (p1)  --  (p2);
         \draw (p1)  --  (p3);
        \draw (p3)  --  (p2);
        \draw (p11)  --  (p22);
         \draw (p11)  --  (p33);
        \draw (p33)  --  (p22);
        \draw (p1)  --  (p11);
         \draw (p2)  --  (p22);
        \draw (p3)  --  (p33);
        
         \draw[gray,->] (p11)  --  (0.9,2);
        \draw[gray,->] (p22)  --  (3.9,2);
        \draw[gray,->] (p33)  --  (2.4,3);
        \node [draw=white, fill=white] (a) at (1.5,-0.7) {(c)};
      \end{tikzpicture}   
            \hspace{0.5cm}
   \begin{tikzpicture}[very thick,scale=0.7]
\tikzstyle{every node}=[circle, draw=black, fill=white, inner sep=0pt, minimum width=5pt];
        \path (0,0) node (p1) {} ;
        \path (3,0) node (p2) {} ;
        \path (1.5,1) node (p3) {} ;
        
         \path (0,2) node (p11) {} ;
        \path (3,2) node (p22) {} ;
        \path (1.5,3) node (p33) {} ;
        
        \draw (p1)  --  (p2);
         \draw (p1)  --  (p3);
        \draw (p3)  --  (p2);
        \draw (p11)  --  (p22);
         \draw (p11)  --  (p33);
        \draw (p33)  --  (p22);
        \draw (p1)  --  (p11);
         \draw (p2)  --  (p22);
        \draw (p3)  --  (p33);
        
         \draw[gray,->] (p11)  --  (0.9,2);
        \draw[gray,->] (p22)  --  (3.9,2);
        \draw[gray,->] (p33)  --  (2.4,3);
          \draw[gray,->] (p1)  --  (-0.9,0);
        \draw[gray,->] (p2)  --  (2.1,0);
        \draw[gray,->] (p3)  --  (0.6,1);
        \node [draw=white, fill=white] (a) at (1.5,-0.7) {(d)};
        \end{tikzpicture}
\end{center}
\caption{(a) A fully-symmetric infinitesimal flex of the framework $(G,p)$ from Figure~\ref{fig:ex1}. This infinitesimal flex may be written as the sum of an infinitesimal rotation about the indicated point (b) and an infinitesimal flex fixing the bottom triangle (c), neither of which is symmetric with respect to any irreducible representation of $\mathbb{Z}_2$. By adding a suitable infinitesimal rotation to (a) we may also obtain the anti-symmetric infinitesimal flex shown in (d). That this change of symmetry type for the infinitesimal flex is  possible is a consequence of Proposition~\ref{proptrrt}: the $1$-dimensional space of infinitesimal rotations in the plane is not  $(P_V\otimes I_d)$-invariant.} %, because if the $1$-dimensional space of infinitesimal rotations in the plane, $\mathcal{R}$, was $(P_V\otimes I_d)$-invariant, then all vectors in $\mathcal{R}$ would be either fully- or anti-symmetric.}
\label{fig:ex1maxwell}
\end{figure}
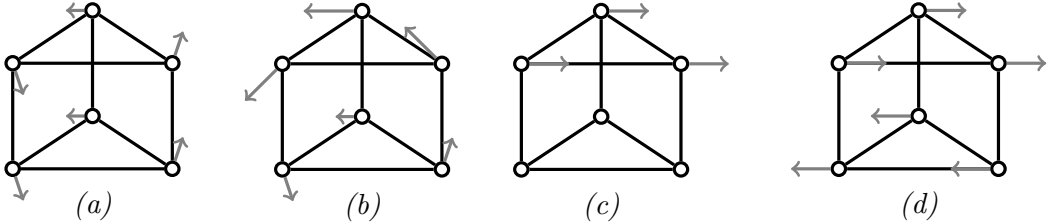

Thus, by comparing the coefficients of $\chi(\rho_0)$, we conclude from Theorem~\ref{thm:symcount}  that $(G,p)$ has %two translations plus 
a fully-symmetric infinitesimal motion.   We will show in Section~\ref{sec:finiteflex} that this infinitesimal motion extends to a finite motion.  (Note that  $(G,p)$ also has a trivial infinitesimal  motion corresponding to a rotation,  which is neither fully-symmetric nor anti-symmetric.) 
Maxwell's original rule would not have detected the infinitesimal flex, because $|E(G)|=2|V(G)|-3=9$.

The decompositions of $\mathbb{R}^{2|V|}$ and $\mathbb{R}^{|E|}$ into invariant subspaces corresponding to $\rho_0$ and $\rho_1$ give a block-diagonalisation of the rigidity matrix $\widetilde R(G,p)$ when it is written in a symmetry-adapted coordinate system. The  decompositions $\chi(P_V\otimes I_2)=6\chi(\rho_0)+6\chi(\rho_1)$ and  $\chi(P'_E)=3\chi(\rho_0)+6\chi(\rho_1)$ indicate that  $\widetilde R(G,p)$ has a block-diagonalised form with a block $\widetilde R_0(G,p)$ for $\rho_0$ of size $3 \times 6$ and a block $\widetilde R_1(G,p)$ for $\rho_1$ of size $6 \times 6$.

\begin{figure}[htp]
\begin{center}
   \begin{tikzpicture}[very thick,scale=0.7]
\tikzstyle{every node}=[circle, draw=black, fill=white, inner sep=0pt, minimum width=5pt];
        \path (0,0) node[fill=black] (p1) {} ;
        \path (3,0) node (p2) {} ;
        \path (1.5,1) node (p3) {} ;
        
         \path (0,2) node[fill=black] (p11) {} ;
        \path (3,2) node (p22) {} ;
        \path (1.5,3) node (p33) {} ;
        
        \draw (p1)  --  (p2);
         \draw (p1)  --  (p3);
        \draw (p3)  --  (p2);
        \draw (p11)  --  (p22);
         \draw (p11)  --  (p33);
        \draw (p33)  --  (p22);
       % \draw (p1)  --  (p11);
         \draw (p2)  --  (p22);
        \draw (p3)  --  (p33);        
\node [draw=white, fill=white] (a) at (1.5,-0.7) {(a)};
      \end{tikzpicture} 
      \hspace{2cm}
         \begin{tikzpicture}[very thick,scale=0.7]
\tikzstyle{every node}=[circle, draw=black, fill=white, inner sep=0pt, minimum width=5pt];
        \path (0,0) node[fill=black] (p1) {} ;
        \path (3,0) node (p2) {} ;
        \path (1.5,1) node (p3) {} ;
        
         \path (0,2) node[fill=black] (p11) {} ;
        \path (3,2) node (p22) {} ;
        \path (1.5,3) node (p33) {} ;
        
        \draw (p1)  --  (p2);
         \draw (p1)  --  (p3);
        \draw (p3)  --  (p2);
        \draw (p11)  --  (p22);
         \draw (p11)  --  (p33);
        \draw (p33)  --  (p22);
       % \draw (p1)  --  (p11);
         \draw (p2)  --  (p22);
        \draw (p3)  --  (p33);

        \draw[gray,->] (p22)  --  (3,3);
        \draw[gray,->] (p33)  --  (1.2,3.6);
         
        \draw[gray,->] (p2)  --  (3,1);
        \draw[gray,->] (p3)  --  (1.2,1.6);
\node [draw=white, fill=white] (a) at (1.5,-0.7) {(b)};
      \end{tikzpicture} 
\end{center}
\caption{(a) A pinned version of the framework in Figure~\ref{fig:ex1maxwell}, where pinned vertices are shown in black. Its fully-symmetric infinitesimal flex is shown in (b).}
\label{fig:ex1maxwellpin}
\end{figure}
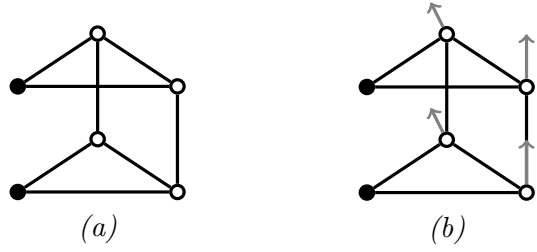

We get a more direct identification of the symmetric flex if we analyse a pinned version of the framework in Figure~\ref{fig:ex1maxwell} (as shown in Figure~\ref{fig:ex1maxwellpin}).  If we pin the two leftmost vertices of the framework  in Figure~\ref{fig:ex1maxwell}, so that they can no longer be displaced (i.e. we remove the four columns corresponding to these two vertices from the rigidity matrix) and we also delete the edge between them, then overall 3 degrees of freedom have been removed and the resulting pinned framework still has extrusion symmetry. Since the pinned framework no longer has trivial infinitesimal motions, the character counts simplify as shown in the table below. 
\begin{table}[htp]
\begin{center}
\begin{tabular}{l||l|l}
$\mathbb{Z}_{2}$   &   $0$  &  $1$   \\\hline\hline
$\chi(P_V)$  &    4  &  0 \\\hline
$\chi(P_V\otimes I_2)$  &    8  &  0\\\hline
$\chi(P'_E)$  &    8  &  -2 \\\hline
\end{tabular}
\end{center}
\end{table}\label{tab:pin}

It follows from the table that $\chi(P_V\otimes I_2)- \chi(P'_E)= (0,2)= \chi(\rho_0)-\chi(\rho_1)$ and hence one fully-symmetric infinitesimal flex (and one anti-symmetric self-stress) is detected.  The pinning preserves the block-diagonalisation of $\widetilde R(G,p)$ and removes two columns and a row from $\widetilde R_1(G,p)$ and two columns from $\widetilde R_0(G,p)$. There is an anti-symmetric self-stress due to the column deficiency in $\widetilde R_1(G,p)$  and a fully-symmetric flex  due to the row deficiency in $\widetilde R_0(G,p)$.  We note that the block-diagonalisation is preserved by any change in the coordinates which preserves the extrusion symmetry and that the self-stress can be relieved by removing any of the edges,  but only the removal of one of the edges that are fixed under extrusion symmetry %$v_{i0}v_{i1}$ which 
(whose rows are in $\widetilde R_1(G,p)$) will preserve the extrusion symmetry and hence the block-diagonalisation. 
\end{example}

\begin{example}\label{ex:2} Consider the framework $(G,p)$ with $2$-fold extrusion symmetry in $\mathbb{R}^2$ shown in Figure~\ref{fig:ex2}, where $G=K_3\square K_2^{\square t}$. The group $\mathbb{Z}_2^2$ has four irreducible representations of dimension 1 whose characters are $\chi(\rho_{(0,0)})=(1,1,1,1)$, $\chi(\rho_{(1,0)})=(1,-1,1,-1)$, $\chi(\rho_{(0,1)})=(1,1,-1,-1)$ and $\chi(\rho_{(1,1)})=(1,-1,-1,1)$.
We have the following character table for $(G,p)$:

\begin{table}[htp]
\begin{center}
\begin{tabular}{l||l|l|l|l}
$\mathbb{Z}_{2}^2$   &   $(0,0)$  &  $(1,0)$ & $(0,1)$ & $(1,1)$  \\\hline\hline
$\chi(P_V)$  &    12  &  0 & 0 & 0 \\\hline
$\chi(P_V\otimes I_2)$  &    24  &  0& 0 & 0\\\hline
$\chi(P'_E)$  &    24  &  -6 &  -6 & 0 \\\hline
$\chi(P_V\otimes I_2)^{(\mathcal{T})}$  &    2  &  2 & 2 & 2 \\
\end{tabular}
\end{center}
\end{table}

It follows that $$\chi(P_V\otimes I_2)- \chi(P_V\otimes I_2)^{(\mathcal{T})}= (22,-2,-2,-2)=  4 \chi(\rho_{(0,0)})+ 6 \chi(\rho_{(1,0)})+ 6 \chi(\rho_{(0,1)})+ 6\chi(\rho_{(1,1)})$$
and 
$$\chi(P'_E)= (24,-6,-6,0)= 3\chi(\rho_{(0,0)}) + 6\chi(\rho_{(1,0)}) + 6 \chi(\rho_{(0,1)})+ 9\chi(\rho_{(1,1)}). $$

Thus, by comparing the coefficients of the characters of the irreducible representations, we may conclude from   Theorem~\ref{thm:symcount} that $(G,p)$ has a fully-symmetric infinitesimal flex and a three-dimensional space of self-stresses that are symmetric with respect to $\rho_{(1,1)}$. The fully-symmetric flex can again be shown to be finite using the results in Section~\ref{sec:finiteflex}.

As in the previous example, we may again analyse pinned versions of this framework. If we pin $p_100$ and $p_110$, for example, and remove the edge between those two vertices, then the framework still has extrusion symmetry in the direction of $\tau_1$. Similarly,  if we pin $p_100$ and $p_1 01$ and remove the edge between those two vertices, then the framework has extrusion symmetry in the direction of $\tau_2$. In both cases, the character counts for $\mathbb{Z}_2$ detect a fully-symmetric infinitesimal flex (and four anti-symmetric self-stresses) since $\chi(P_V\otimes I_2)- \chi(P'_E)= (-3,5)= \chi(\rho_0)-4\chi(\rho_1)$.
\end{example}

We conclude this subsection with some general remarks.

\begin{figure}[htp]
\begin{center}
\begin{tikzpicture}[very thick,scale=0.5]
\tikzstyle{every node}=[circle, draw=black, fill=white, inner sep=0pt, minimum width=5pt];
        \path (0,-0.3) node (p1) {} ;
        \path (3,-0.3) node (p2) {} ;
        \path (1.5,0.7) node (p3) {} ;
        
         \path (0,2.3) node (p11) {} ;
        \path (3,2.3) node (p22) {} ;
        \path (1.5,3.3) node (p33) {} ;
        
        \path (5,1) node (p111) {} ;
        \path (8,1) node (p222) {} ;
        \path (6.5,2) node (p333) {} ;

        \draw (p1)  --  (p2);
         \draw (p1)  --  (p3);
        \draw (p3)  --  (p2);
        \draw (p11)  --  (p22);
         \draw (p11)  --  (p33);
        \draw (p33)  --  (p22);
         \draw (p111)  --  (p222);
         \draw (p111)  --  (p333);
        \draw (p333)  --  (p222);
        \draw (p1)  --  (p11);
         \draw (p2)  --  (p22);
        \draw (p3)  --  (p33);
        \draw (p1)  --  (p111);
         \draw (p2)  --  (p222);
        \draw (p3)  --  (p333);
         \draw (p11)  --  (p111);
         \draw (p22)  --  (p222);
        \draw (p33)  --  (p333);
        
         \draw[gray,->] (p1)  --  (0.2,-0.9);
        \draw[gray,->] (p2)  --  (3.2,0.3);
        \draw[gray,->] (p3)  --  (0.8,0.7);
        
          \draw[gray,->] (p11)  --  (0.2,1.7);
        \draw[gray,->] (p22)  --  (3.2,2.9);
        \draw[gray,->] (p33)  --  (0.8,3.3);
        
           \draw[gray,->] (p111)  --  (5.2,0.4);
        \draw[gray,->] (p222)  --  (8.2,1.6);
        \draw[gray,->] (p333)  --  (5.8,2);
        
        \node [draw=white, fill=white] (a) at (3,-1.4) {(a)};
      \end{tikzpicture}
         \hspace{2cm}
      \begin{tikzpicture}[very thick,scale=0.5]
\tikzstyle{every node}=[circle, draw=black, fill=white, inner sep=0pt, minimum width=5pt];
        \path (-1,0) node (p1) {} ;
        \path (1,0) node (p2) {} ;
        \path (0,1.33333) node (p3) {} ;
        
          \draw[gray,->] (p1)  --  (-0.7,-0.75);
        \draw[gray,->] (p2)  --  (1.3,0.75);
        \draw[gray,->] (p3)  --  (-0.8,1.33333);
        
         \path (-3,-0.66666) node (p11) {} ;
        \path (3,-0.66666) node (p22) {} ;
        \path (0,3.333333) node (p33) {} ;

        \draw (p1)  --  (p2);
         \draw (p1)  --  (p3);
        \draw (p3)  --  (p2);
        \draw (p11)  --  (p22);
         \draw (p11)  --  (p33);
        \draw (p33)  --  (p22);
         \draw (p1)  --  (p11);
         \draw (p2)  --  (p22);
        \draw (p3)  --  (p33);
        
            \node [draw=white, fill=white] (a) at (0,-1.4) {(b)};
      \end{tikzpicture}
   \end{center}
\caption{(a) A framework in $\mathbb{R}^2$ consisting of a cycle of extruded triangles. The infinitesimal flex shown in (a) extends to a finite flex, as we will see in Section~\ref{sec:finiteflex}. (b) A framework in $\mathbb{R}^2$ which is infinitesimally flexible, since the three extended edges connecting the two triangles meet in a point. }
\label{fig:ex3}
\end{figure}
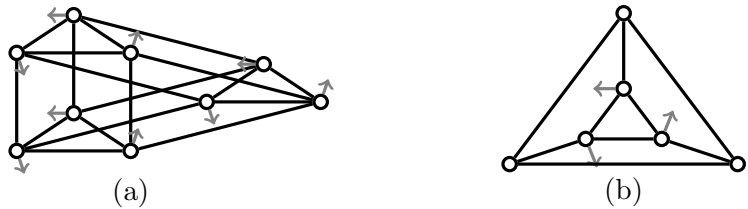

If we take $s\geq 2$ isomorphic copies of a graph $H$, say $H_1,\ldots, H_s$, where $H_{i}$ has vertex set $V(H_{i})=\{(v_j,i)|\, v_j\in V(H)\}$, and construct a framework by taking $s$ congruent  frameworks $(H_1,p_1)$,\ldots, $(H_s,p_s)$ and  adding \emph{any} edges of the from $\{(v_j,i),(v_j,i')\}$ for $v_j\in V(H)$ and $1\leq i<i'<s$, then the infinitesimal flex which rotates each $(H_i,p_i)$ (recall Remark~\ref{rem:rotmot}) is still present. This is because the velocity vectors assigned to  $(v_j,i)$ and $(v_j,i')$ are the same. The framework shown in Figure~\ref{fig:ex3}(a) gives an example. We will show in Section~\ref{sec:finiteflex} that this infinitesimal flex also extends to a finite flex.

As previously mentioned, it is well known that infinitesimal rigidity of bar-joint frameworks is invariant under projective transformations (see \cite{nsw}, for example). 
 It follows that infinitesimal flexibility transfers from a bar-joint framework $(H\square K_2,p)$ with extrusion symmetry to a modified bar-joint framework, where one copy of $H$ is realised as a dilation of the other, and the extrusion edges are no longer parallel, but they all meet in a finite point instead. See Figure~\ref{fig:ex3}(b), for example.
Note, however, that finite flexibility does not transfer in this process.

\subsection{Character counts for point-hyperplane frameworks} \label{sec:countph}

The symmetry-adpated counting method for analysing the rigidity of bar-joint frameworks given in Theorem~\ref{thm:symcount} can be extended to point-hyperplane frameworks in a straightforward fashion. (See \cite{FGO} for similar counts for CAD frameworks with point group symmetry.)

Given a point-hyperplane framework $(G,p,\ell)$, a map $\omega:E\cup V\to \mathbb{R}$ is called a \emph{self-stress} of $(G,p,\ell)$ if it is a linear dependence of the rows of the point-hyperplane  rigidity matrix of $(G,p,\ell)$, i.e., if $\omega^TR(G,p,\ell)=0$. If $(G,p,\ell)$ does not have any non-zero self-stress, then it is said to be \emph{independent}. A framework $(G,p,\ell)$ is \emph{isostatic} if it is infinitesimally rigid and independent.

In the following we assume that $|V|\geq d$ and the points and hyperplanes affinely span at least $\mathbb{R}^{d-1}$.
We let $\Omega(G,p,\ell)$ be the space of self-stresses of $(G,p,\ell)$, and $\mathcal{F}(G,p,\ell)$ be the space of non-trivial infinitesimal motions of $(G,p,\ell)$. Moreover, we denote $s=\textrm{dim } \big(\Omega(G,p,\ell)\big)$ and $m=\textrm{dim } \big(\mathcal F(G,p,\ell)\big)$. We have the Maxwell-type count \begin{eqnarray*}m-s&=&d|V_{\mathcal{P}}|+(d+1)|V_{\mathcal{H}}|-|E_{\mathcal P \mathcal P}|-|E_{\mathcal P \mathcal H}| -  |E_{\mathcal H \mathcal H\not \parallel}|  -  (d-1)|E_{\mathcal H \mathcal H\parallel}|  -|V_{\mathcal{H}}|-\binom{d+1}{2}\\& = &d|V|-|E|-  (d-2)|E_{\mathcal H \mathcal H\parallel}| -\binom{d+1}{2}.\end{eqnarray*} 
In particular, if $(G,p,\ell)$ is isostatic, then $m=s=0$ and we must have $|E|+(d-2)|E_{\mathcal H \mathcal H\parallel}| =d|V|-\binom{d+1}{2}$.

Let us now assume that $(G,p,\ell)$ is  a point-hyperplane framework in $\mathbb{R}^d$ with $t$-fold extrusion symmetry with the property that it has no edge $\{i,j\}\in E_{\mathcal P \mathcal H}$ where the hyperplane corresponding to $j$ contains an extrusion direction. 

By definition of the external representation $P_V'$, it is easy to see that the space of infinitesimal translations $\mathcal{T}$ of $(G,p,\ell)$   is a $(P_V')$-invariant subspace. (An infinitesimal translation of $(G,p,\ell)$ assigns the same vector $b$ to each point $p_i$ of $(G,p,\ell)$ and assigns the vector $(0,\ldots 0,\langle a_k,b\rangle)^T$ to each hyperplane $\ell_k=(a_k,r_k)$ of $(G,p,\ell)$.)  In fact, by definition, every element in $\mathcal{T}$ is clearly fully-symmetric.
Thus we can form the subrepresentation  $(P_V')^{(\mathcal{T})}$ with representation space $\mathcal{T}$. On the other hand, as was the case for bar-joint frameworks,   the space of infinitesimal rotations $\mathcal{R}$ of $(G,p,\ell)$ is not a $(P_V')$-invariant subspace.

By Theorem~\ref{thm:blockph}, the space $\mathcal{M}(G,p,\ell)$ of \emph{all} infinitesimal motions of $(G,p,\ell)$  is a $(P_V')$-invariant subspace, since it is the kernel of the rigidity matrix of $(G,p,\ell)$.   Unlike in the situation for bar-joint frameworks, however, the representation $P_V'$ is in general not unitary (recall the definition of $P_{V_{\mathcal H}}'$), and hence the complement space of a $(P_V')$-invariant subspace is not necessarily $(P_V')$-invariant. Thus, we cannot conclude that a $t$-fold extrusion-symmetric  point-hyperplane framework always has an infinitesimal flex. (See Figure~\ref{fig:simpleex} for an example.)

Analogous to the approach described in Section~\ref{sec:fowler-guest1} we may obtain information about the flexibility and stressability of $t$-fold extrusion-symmetric point-hyperplane frameworks by considering the characters of the representations $P_V'$, $(P_V')^{(\mathcal{T})}$, and $P_E'$. We illustrate this with the following examples. 

\begin{example}\label{ex:3}  Consider the point-line framework $(G,p,\ell)$ with $2$-fold extrusion symmetry in $\mathbb{R}^2$ shown in Figures~\ref{fig:exptlngrfw21}(a) and
\ref{fig:exptlngrfw}(d). The underlying graph $G$ is the graph $H\square_{F_1,F_2} K_2^{\square 2}$, where $F_1=\{w_1\}$ and $F_2=\{w_2\}$. (Recall Figure~\ref{fig:exptlngr}(d).) Since $|E|=14$, $ |V_{\mathcal P}|=|V_{\mathcal H}|=4$, this graph has the  Maxwell count $m-s=2|V_{\mathcal P}|+2|V_{\mathcal H}|-|E|-3=-1$ which only implies the existence of a self-stress.  However, we can use the symmetry-adapted count to reveal that the framework $(G,p,\ell)$  has an infinitesimal flex and two independent self-stresses. 

Note that while  $(G,p,\ell)$ has  $2$-fold extrusion symmetry, each pair of parallel lines lies along one of the extrusion directions. Thus, as discussed in Section~\ref{pinh}, we need to construct a pinned version of this framework to apply the symmetry-adapted counts. So we pin the line $\ell=\ell_1\star 0$; that is, we remove the corresponding three columns for $\ell=(a,r)$ and the row for the normalisation of the normal vector $a$ from the rigidity matrix. Moreover, for the parallel line $\ell'=\ell_1\star 1$, we remove the two columns for the normal vector $a'$ and the row for the normalisation of $a'$ in the rigidity matrix, but we keep the column for $r'$ so that the line can still be displaced in a parallel fashion. The parallel constraint between $\ell$ and $\ell'$ is then of course deleted, since the pinning already forces the lines  to stay parallel. The extrusion symmetry of the framework has now been reduced from $\mathbb{Z}_2^2$ to $\mathbb{Z}_2$, where the extrusion is in the direction of the pinned line. See Figure~\ref{fig:exptlngrfw21}(b). Note that the pinning has also deleted all trivial infinitesimal motions, except for the translation along the direction of the pinned line.

\begin{figure}[htp]
\begin{center}
       \begin{tikzpicture}[very thick,scale=0.7]
\tikzstyle{every node}=[circle, draw=black, fill=white, inner sep=0pt, minimum width=4pt];
     \node [draw=white, fill=white] (a) at (0.7,-0.8) {\tiny $\ell_20\star$};   
                \node [draw=white, fill=white] (a) at (-1.5,0.2) {\tiny $\ell_1\star 0$};
   \node [draw=white, fill=white] (a) at (3.8,1.7) {\tiny $\ell_21\star$};  
      \node [draw=white, fill=white] (a) at (2.7,0.2) {\tiny $\ell_1\star 1$};
  
     \node [draw=white, fill=white] (a) at (0.1,0.4) {\tiny $p_100$};
              \path (0,0) node (p1) {} ;
       
           \draw(-2,-0.5)--(6,-0.5);    
             \draw(-2,-1)--(2,3);   
             
               \draw[dotted](p1)--(0,-0.5);  
                 \draw[dotted](p1)--(-0.5,0.5);  
                           
                   \node [draw=white, fill=white] (a) at (2.3,2.3) {\tiny $p_110$};          
        \path (2,2) node (p2) {} ;
       
           \draw(-0,1.5)--(7,1.5);    
              
              \draw[dotted](p1)--(p2); 
               \draw[dotted](p2)--(2,1.5);  
                 \draw[dotted](p2)--(1.5,2.5);  
                 
               \draw[dotted](1.2,-0.5)--(3.2,1.5);

          \node [draw=white, fill=white] (a) at (4.8,0.1) {\tiny $p_101$};
              \path (4,0) node (p3) {} ;

             \draw(2,-1)--(6,3);   
             
               \draw[dotted](p3)--(4,-0.5);  
                 \draw[dotted](p3)--(3.5,0.5);  
                           
                   \node [draw=white, fill=white] (a) at (6.3,2.3) {\tiny $p_111$};          
        \path (6,2) node (p4) {} ;
       
           \draw(-0,1.5)--(3.5,1.5);    
              
              \draw[dotted](p3)--(p4); 
               \draw[dotted](p4)--(6,1.5);  
                 \draw[dotted](p4)--(5.5,2.5);  
                 
             \draw[dotted](p3)--(p1);        
                \draw[dotted](p2)--(p4); 
                
                 \draw[dotted](0.2,1.2)--(4.2,1.2);   
        \node [draw=white, fill=white] (a) at (2,-1.8) {(a)};
             \end{tikzpicture}
         \hspace{0.4cm}   
     % \begin{tikzpicture}[very thick,scale=0.7]
    %             \node [draw=white, fill=white] (a) at (0,0.8) {
%\begin{tabular}{l||l|l|l|l}
%$\mathbb{Z}_{2}^2$   &   $(0,0)$  &  $(1,0)$ & $(0,1)$ & $(1,1)$  \\\hline\hline
% $\chi(\rho_{(0,0)})$ &    1 &  1 &  1 & 1\\\hline
% $\chi(\rho_{(1,0)})$ &    1  &  -1 &  1 & -1 \\\hline
% $\chi(\rho_{(0,1)})$  &    1  &  1 & -1 & -1 \\\hline
%$\chi(\rho_{(1,1)})$  &    1  &  -1 & -1 & 1 \\
%\end{tabular}
%};
% \node [draw=white, fill=white] (a) at (0,-1.8) {(b)};
%      \end{tikzpicture}
      \begin{tikzpicture}[very thick,scale=0.7]
\tikzstyle{every node}=[circle, draw=black, fill=white, inner sep=0pt, minimum width=4pt];
  
         \node [draw=white, fill=white] (b) at (2,-0.5) {\large{$\circlearrowleft$}};
\node [draw=white, fill=white] (b) at (3,1.5) {\large{$\circlearrowleft$}};
     \node [draw=white, fill=white] (a) at (0.7,-0.8) {};   
                \node [draw=white, fill=white] (a) at (-1.3,0.2) {};
   \node [draw=white, fill=white] (a) at (3.8,1.7) {};  
      \node [draw=white, fill=white] (a) at (2.9,0.2) {};
  
     \node [draw=white, fill=white] (a) at (0.1,0.4) {};
              \path (0,0) node (p1) {} ;
       
           \draw(-2,-0.5)--(6,-0.5);    
             \draw[line width=1mm](-2,-1)--(2,3);   
             
               \draw[dotted](p1)--(0,-0.5);  
                 \draw[dotted](p1)--(-0.5,0.5);  
                           
                   \node [draw=white, fill=white] (a) at (2.3,2.3) {};          
        \path (2,2) node (p2) {} ;
       
           \draw(-0,1.5)--(7,1.5);    
              
              \draw[dotted](p1)--(p2); 
               \draw[dotted](p2)--(2,1.5);  
                 \draw[dotted](p2)--(1.5,2.5);  
                 
               \draw[dotted](1.2,-0.5)--(3.2,1.5);

          \node [draw=white, fill=white] (a) at (4.8,0.1) {};
              \path (4,0) node (p3) {} ;

             \draw(2,-1)--(6,3);   
             
               \draw[dotted](p3)--(4,-0.5);  
                 \draw[dotted](p3)--(3.5,0.5);  
                           
                   \node [draw=white, fill=white] (a) at (6.3,2.3) {};          
        \path (6,2) node (p4) {} ;
       
           \draw(-0,1.5)--(3.5,1.5);    
              
              \draw[dotted](p3)--(p4); 
               \draw[dotted](p4)--(6,1.5);  
                 \draw[dotted](p4)--(5.5,2.5);  
                 
             \draw[dotted](p3)--(p1);        
                \draw[dotted](p2)--(p4); 
                
                 \draw[dotted](0.2,1.2)--(4.2,1.2);

                 \draw[thick,gray,->] (3.7,0.7)  --  (3.4,1);

                 \draw[gray,->] (p3)  --  (4,0.8);
                 \draw[gray,->] (p4)  --  (6,2.8);

        \node [draw=white, fill=white] (a) at (2,-1.8) {(b)};
             \end{tikzpicture}
           \end{center}
     \vspace{-0.5cm}
\caption{(a) A point-line framework with $2$-fold extrusion symmetry. The framework in (b) is obtained from the one in (a) by pinning the line $\ell_1\star 0$. The pinned line is shown with increased thickness and can be considered the ``ground''. The pinned framework has $1$-fold extrusion symmetry (in the direction of the pinned line) and a fully-symmetric infinitesimal flex (indicated by gray arrows).}
\label{fig:exptlngrfw21}
\end{figure}
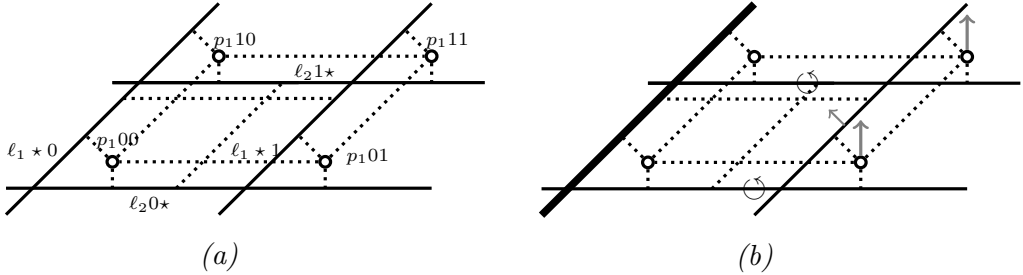

Let us apply the symmetry-adapted counts to the pinned framework in Figure~\ref{fig:exptlngrfw21}(b). None of the four vertices in   $V_{\mathcal{P}}$ are fixed by the non-trivial group element in $\mathbb{Z}_2$, and hence $\chi(P_{V_{\mathcal P}})=(4,0)$. 
The two horizontal lines each have three columns in the rigidity matrix. The fixed line $\ell$ on the left has no columns and the line $\ell'$ on the right only has one column in the rigidity matrix. Thus, the entry of $\chi(P'_{V_{\mathcal H}})$ corresponding to the identity is $3+3+1=7$. Since the two horizontal lines are images of each other under the extrusion, and the other two lines are fixed by the extrusion, the entry of $\chi(P'_{V_{\mathcal H}})$ corresponding to the non-trivial group element is $0+0+1=1$.

There are four edges in $E_{\mathcal{P}\mathcal{P}}$ and eight edges in $E_{\mathcal{P}\mathcal{H}}$. Moreover, there is one edge in  $E_{\mathcal{H}\mathcal{H}\parallel}$, namely the one that keeps the horizontal lines parallel. (Recall that the parallel constraint between $\ell$ and $\ell'$ has been removed.) Two edges in $E_{\mathcal{P}\mathcal{P}}$ are fixed by the non-identity element in $\mathbb{Z}_2$, and each of those edges contributes $-1$ to the trace of $P'_{E_{\mathcal P \mathcal P}}(1)$. None of the edges in  $E_{\mathcal{P}\mathcal{H}}$ are fixed by the non-identity element. The edge in $E_{\mathcal{H}\mathcal{H}\parallel}$ is fixed by each element in $\mathbb{Z}_2$ (with the lines being swapped under the non-trivial group element) and hence, by the definition of $P'_{E_{\mathcal H \mathcal H}\parallel}$, we have $\chi(P'_{E_{\mathcal H \mathcal H}\parallel})=(1,-1)$.  This gives  the  character counts for $(G,p,\ell)$ shown in Table~\ref{tab:expl1}.

\begin{table}[htp]
\begin{center}
\begin{tabular}{l||l|l}
$\mathbb{Z}_{2}^2$   &   $0$  &  $1$   \\\hline\hline
$\chi(P_{V_{\mathcal P}})$  &    4  &  0  \\\hline
$\chi(P_{V_{\mathcal P}}\otimes I_2)$  &    8  &  0 \\\hline
$\chi(P'_{V_{\mathcal H}})$  &    7  &  1 \\\hline
$\chi(P'_{E_{\mathcal P \mathcal P}})$  &    4  &  -2  \\\hline
 $\chi(P_{E_{\mathcal P \mathcal H}})$ &    8  &  0 \\\hline
 $\chi(P'_{E_{\mathcal H \mathcal H}\parallel})$ &    1  &  -1  \\\hline
 $\chi(P_{V_{\mathcal H}})$  &    2  &  0  \\\hline 
$\chi(P_V')^{(\mathcal{T})}$  &    1  &  1  \\
\end{tabular}
\end{center}
\vspace{0.2cm}\caption{Characters for the symmetry-adapted rigidity analysis of the pinned framework in Figure~\ref{fig:exptlngrfw21}(b).}\label{tab:expl1}
\end{table}

In this table the $\chi(P_{V_{\mathcal P}}\otimes I_2)$ character in the second row counts the degrees of freedom of the points.
The $\chi(P'_{V_{\mathcal H}})$ character in the third row counts the degrees of freedom of the lines.   Note that one of these freedoms for each of the horizontal lines is removed in the penultimate row of the table (for the $\chi(P_{V_{\mathcal H}})$ character) by the normalisation constraint (3.8).
The $\chi(P'_{E_{\mathcal P \mathcal P}})$,  $\chi(P_{E_{\mathcal P \mathcal H}})$ and $\chi(P'_{E_{\mathcal H \mathcal H}\parallel})$ characters in the fourth,  fifth and sixth rows correspond to the edge constraints.  
Finally, the last line corresponds to the translational rigid body motion in the direction of the pinned line.

It follows that $$\chi(P_{V}') -\chi(P_V')^{(\mathcal{T})}=  \chi(P_{V_{\mathcal P}}\otimes I_2) + \chi(P'_{V_{\mathcal H}})-\chi(P_V')^{(\mathcal{T})}= (14,0)= 7\chi(\rho_0) + 7\chi(\rho_1).$$
and 
$$\chi(P'_E)= \chi(P'_{E_{\mathcal P \mathcal P}})+\chi( P_{E_{\mathcal P \mathcal H}}) + \chi(P'_{E_{\mathcal H \mathcal H \parallel}})+ \chi(P_{V_{\mathcal H}})=(15,-3)= 6\chi(\rho_0) + 9\chi(\rho_1). $$

Thus, by comparing the coefficients of the characters of the irreducible representations, we see that $(G,p,\ell)$ has a fully-symmetric infinitesimal flex  (see Figure~\ref{fig:exptlngrfw21}(b)) and two anti-symmetric self-stresses.  Using the results in Section~\ref{sec:finiteflex}, the fully-symmetric infinitesimal flex can  be shown to be finite.

The infinitesimal flex shown is fully-symmetric with respect to the group representation and is clearly also fully-symmetric with respect to the extrusion direction for the translations of the points and the rotations of the lines. However the corresponding translations of the horizontal lines are not equal and hence not `fully-symmetric' with respect to the extrusion direction. This discrepancy is due to the off-diagonal term $-\tau_\gamma(i)^T$ which we introduced into the matrix representation $P'_{V_{\mathcal H}} (\gamma)$ of the non-trivial  group element $\gamma$ acting on the coordinates of a hyperplane.

The irreducible representations for the freedoms $\chi(P'_V)$ and the constraints $\chi(P'_E)$ give a block-decomposition of the rigidity matrix $\widetilde R(G,p,\ell)$. Since  $\chi(P_{V}')=8\chi(\rho_0) + 7\chi(\rho_1)$ and $\chi(P'_E)=  6\chi(\rho_0) + 9\chi(\rho_1) $, this block-decomposed matrix has a block $\widetilde R_0(G,p,\ell)$ (of size $6 \times 8$) corresponding to $\rho_0$ and a block $\widetilde R_1(G,p,\ell)$ (of size $9 \times 7$) corresponding to $\rho_1$. The $\rho_1$-symmetric self-stresses (row dependencies in the block $\widetilde R_1(G,p,\ell)$) can be relieved -- without destroying the block-decomposition of the rigidity matrix -- by deleting any two of the three rows corresponding to the parallel constraint between the horizontal lines and the two constraints in $E_{\mathcal{PP}}$ which are fixed by the non-trivial group element, as all of these constraints are $\rho_1$-symmetric.

Note that we may also pin one of the horizontal lines instead and consider the framework with extrusion symmetry in the direction of the $x$-axis. This will again detect the  fully-symmetric infinitesimal flex.
\end{example}

\begin{example}\label{ex:4}
Let's consider a $3$-dimensional point-hyperplane framework defined as follows. We start with a bar-joint framework modelling the $1$-skeleton of a unit cube, that is, the framework has  8 points with coordinates $\{(x,y,z)|\,x,y,z\in\{0,1\}\}$, and the lengths of the twelve edges of the cube are constrained with twelve point-point distance constraints (see Figure~\ref{fig:cube1}b). In addition, the point-hyperplane framework  has 4 hyperplanes which are parallel to the  bottom, top, left and right faces of the cube respectively. All 4 points of the bottom face of the cube have a point-hyperplane distance constraint to the bottom hyperplane. Similarly, the 4 top points, the 4 left points, and the 4 right points of the cube have point-hyperplane distance constraints to the top, left and right hyperplanes, respectively. This forces the points of those 4 faces of the cube to stay coplanar in any motion of the framework. (Note that if we only consider the bar-joint framework of the $1$-skeleton of the cube, without the point-hyperplane distance constraints, then this framework can be moved so that planarity of any face of the cube is destroyed.) Finally, we impose the additional constraints that the two pairs of opposite hyperplanes  have to remain parallel.
%Let's consider a $3$-dimensional point-hyperplane framework that models a symmetrically constrained unit cube. It has  8 points with coordinates $\{(x,y,z)|\,x,y,z\in\{0,1\}\}$, and the lengths of the twelve edges of the cube are constrained with twelve point-point distance constraints. Moreover, the points of the top, bottom, left and right faces of the cube are constrained to be coplanar by forcing each set of four points to have a fixed distance to a plane that is parallel to the cube's face. Finally, we impose the constraint that the two pairs of opposite planes  have to remain parallel. 
Note that this point-hyperplane framework, which we will denote by $(G,p,\ell)$, has $3$-fold extrusion symmetry. This is illustrated in Figure~\ref{fig:cube1}, where for simplicity we assume without loss of generality that the point-hyperplane distances are all zero.

\begin{figure}[htp]
\begin{center}
\begin{tikzpicture}[very thick,scale=0.6]
\tikzstyle{every node}=[circle, draw=black, fill=white, inner sep=0pt, minimum width=5pt];
        \draw[dashed] (0,0)--(3,0);
       \draw[dashed] (0,0)--(0,3);
       \draw[dashed] (0,0)--(1.5,1.5);
         \draw[dashed] (0,3)--(1.5,4.5);
       \draw[dashed] (3,0)--(4.5,1.5);
       \draw[dashed] (1.5,1.5)--(4.5,1.5);
          \draw[dashed] (1.5,1.5)--(1.5,4.5);
       % \draw[dashed] (0,3)--(3,3);
       %\draw[dashed] (3,0)--(3,3);
        \path (0,0) node (p1) {} ;

          \node [draw=white, fill=white] (a) at (-0.4,-0.2) {\tiny $p$};
          
          \node [draw=white, fill=white] (a) at (2.7,0.7) {\tiny $\ell_1$};
           % \node [draw=white, fill=white] (a) at (1.8,1.1) {\tiny $\ell_2$};
            \node [draw=white, fill=white] (a) at (0.8,2.7) {\tiny $\ell_2$};
          
                  \node [draw=white, fill=white] (a) at (1.7,-0.7) {(a)}; 
      \end{tikzpicture}
      \hspace{1cm}
\begin{tikzpicture}[very thick,scale=0.6]
\tikzstyle{every node}=[circle, draw=black, fill=white, inner sep=0pt, minimum width=5pt];
   
            \node [draw=white, fill=white] (a) at (2,0.7) {\tiny $\ell_1\star \star 0$};
           % \node [draw=white, fill=white] (a) at (1.8,0.8) {\tiny $\ell_2 \star 0\star $};
            \node [draw=white, fill=white] (a) at (-1,1.5) {\tiny $\ell_2  0 \star\star $};
            \node [draw=white, fill=white] (a) at (5.3,2.2) {\tiny $\ell_2  1 \star\star $};
   % \node [draw=white, fill=white] (a) at (2.3,2) {\tiny $\ell_2\star  1\star $};
   \node [draw=white, fill=white] (a) at (2.8,4) {\tiny $\ell_1\star \star 1$};
   
        \path (0,0) node (p1) {} ;
        \path (3,0) node (p2) {} ;
        \path (0,3) node (p3) {} ;
         \path (3,3) node (p4) {} ;
        \path (1.5,1.5) node (p5) {} ;
        \path (4.5,1.5) node (p6) {} ;
      \path (1.5,4.5) node (p7) {} ;
        \path (4.5,4.5) node (p8) {} ;
        
        \draw (p1)  --  (p2);
         \draw (p1)  --  (p3);
    \draw (p4)  --  (p2);
         \draw (p4)  --  (p3);
         
           \draw (p5)  --  (p7);
         \draw (p7)  --  (p8);
    \draw (p6)  --  (p5);
         \draw (p6)  --  (p8);
         
           \draw (p1)  --  (p5);
         \draw (p2)  --  (p6);
    \draw (p4)  --  (p8);
         \draw (p7)  --  (p3);
         
          \node [draw=white, fill=white] (a) at (-0.8,-0.2) {\tiny $p000$};
           \node [draw=white, fill=white] (a) at (3.8,-0.2) {\tiny$p100$};
                     \node [draw=white, fill=white] (a) at (0.7,1.5) {\tiny $p010$};
           \node [draw=white, fill=white] (a) at (5.3,1.5) {\tiny$p110$};
           
            \node [draw=white, fill=white] (a) at (-0.8,3) {\tiny $p001$};
           \node [draw=white, fill=white] (a) at (3.8,3) {\tiny$p101$};
                     \node [draw=white, fill=white] (a) at (0.7,4.5) {\tiny $p011$};
           \node [draw=white, fill=white] (a) at (5.3,4.5) {\tiny$p111$};

                  \node [draw=white, fill=white] (a) at (2,-0.7) {(b)}; 
      \end{tikzpicture}
   \end{center}
\caption{(a) A point-hyperplane framework with one point $p$ and two planes, where $\ell_1$ and $\ell_2$ are the bottom and left plane, respectively, and where the point $p$ is constrained to lie on each of the two planes. This framework  can be extruded three times, first to the right along the $x$-axis, then along the $y$-axis, and finally vertically up the $z$-axis, to obtain the symmetrically constrained cube framework in (b). For the point-hyperplane framework in (b), only point-point distances are shown, but we also assume that each set of four points corresponding to the bottom, top, left and right faces of the cube are constrained to remain coplanar and that the top and bottom faces and  the left and right faces remain parallel.}
\label{fig:cube1}
\end{figure}
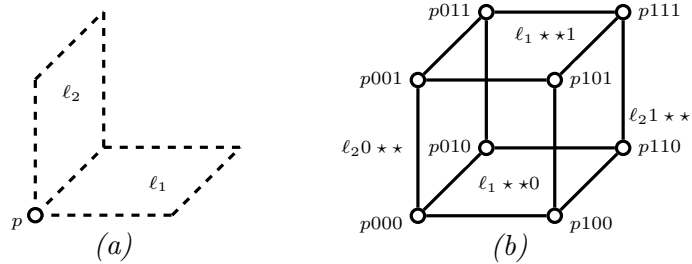

While  $(G,p,\ell)$ has  $3$-fold extrusion symmetry, for each extrusion direction there exists at least one pair of parallel planes that contains this direction. Thus, as discussed in Section~\ref{pinh}, we need to construct a pinned version of this framework to apply the symmetry-adapted counts. We pin the plane $\ell=\ell_1\star\star 0$ by removing the corresponding four columns for $\ell=(a,r)$ and the row for the normalisation of the normal vector $a$ from the rigidity matrix. Moreover, for the parallel plane $\ell'=\ell_1\star \star 1$, we remove the three columns for the normal vector $a'$ and the row for the normalisation of $a'$ in the rigidity matrix, but we keep the column for $r'$ so that the plane can still be displaced in a parallel fashion. The parallel constraints between $\ell$ and $\ell'$ are deleted since they have become redundant. The extrusion symmetry of the framework has now been reduced from $\mathbb{Z}_2^3$ to $\mathbb{Z}_2^2$, where the extrusions are in the directions of the $x$- and $y$-axis. However, if we extrude along the $y$-axis, then the left and right planes are still fixed under this extrusion. Thus, to avoid further pinning, we only analyse the framework as a framework with single extrusion symmetry in the direction of the $x$-axis.

We now apply the symmetry-adapted counts to this pinned framework with $\mathbb{Z}_2$ extrusion symmetry. None of the 8 vertices in   $V_{\mathcal{P}}$ are fixed by the non-trivial group element in $\mathbb{Z}_2$, and hence $\chi(P_{V_{\mathcal P}})=(8,0)$. 
The left and right planes each have four columns in the rigidity matrix. The fixed bottom plane $\ell$ has no columns and the top plane $\ell'$ only has one column in the rigidity matrix. Thus, the entry of $\chi(P'_{V_{\mathcal H}})$ corresponding to the identity is $4+4+1=9$. Since the left and right planes are images of each other under the extrusion, and the other two planes are fixed by the extrusion, the entry of $\chi(P'_{V_{\mathcal H}})$ corresponding to the non-trivial group element is $0+0+1=1$. Note that one of the freedoms for each of the left and right planes is removed in the penultimate row of the table by the normalisation constraint (3.8).

There are 12 edges in $E_{\mathcal{P}\mathcal{P}}$ and 16 edges in $E_{\mathcal{P}\mathcal{H}}$. Moreover, there is one edge in  $E_{\mathcal{H}\mathcal{H}\parallel}$, namely the one for the parallel constraint for the left and right plane.     Four edges in $E_{\mathcal{P}\mathcal{P}}$ are fixed by the non-identity element in $\mathbb{Z}_2$, and each of those edges contributes $-1$ to the trace of $P'_{E_{\mathcal P \mathcal P}}(1)$. None of the edges in  $E_{\mathcal{P}\mathcal{H}}$ are fixed by the non-identity element. The edge in $E_{\mathcal{H}\mathcal{H}\parallel}$ is fixed by each element in $\mathbb{Z}_2$ (with the planes being swapped under the non-trivial group element) and hence, by the definition of $P'_{E_{\mathcal H \mathcal H}\parallel}$, we have $\chi(P'_{E_{\mathcal H \mathcal H}\parallel}\otimes I_2)=(2,-2)$. (Recall that an edge in  $E_{\mathcal{H}\mathcal{H}\parallel}$ corresponds to two rows in the rigidity matrix, as described in Section~\ref{sec:parcon}, and hence we consider $\chi(P'_{E_{\mathcal H \mathcal H}\parallel}\otimes I_2)$ instead of just  $\chi(P'_{E_{\mathcal H \mathcal H}\parallel})$.) 

Finally, note that the pinning has deleted the infinitesimal translation in the $z$-direction, but there still is a  $2$-dimensional space of infinitesimal translations in the $xy$-plane, so that $\chi(P_V')^{(\mathcal{T})}=(2,2).$

 This gives  the  character counts for $(G,p,\ell)$ shown in Table~\ref{tab:expl2}.

\begin{table}[htp]
\begin{center}
\begin{tabular}{l||l|l}
$\mathbb{Z}_{2}$   &   $0$  &  $1$  \\\hline\hline
$\chi(P_{V_{\mathcal P}})$   &   8  &  0 \\\hline
$\chi(P_{V_{\mathcal P}}\otimes I_3)$  &   24 &  0 \\\hline
$\chi(P'_{V_{\mathcal H}})$  &   9  &  1  \\\hline
$\chi(P'_{E_{\mathcal P \mathcal P}})$  &   12  &  -4  \\\hline
$\chi(P_{E_{\mathcal P \mathcal H}})$  &   16  &  0 \\\hline
 $\chi(P'_{E_{\mathcal H \mathcal H}\parallel}\otimes I_2)$ &   2  &  -2\\\hline
$\chi(P_{V_{\mathcal H}})$  &   2  &  0 \\\hline
$\chi(P_V')^{(\mathcal{T})}$ &   2  &  2 \\\hline
\end{tabular}
\end{center}
\vspace{0.3cm}\caption{Characters for the symmetry-adapted rigidity analysis of the pinned version of the framework $(G,p,\ell)$.}\label{tab:expl2}
\end{table}
It follows that $$\chi(P_{V}') -\chi(P_V')^{(\mathcal{T})}=  \chi(P_{V_{\mathcal P}}\otimes I_3) + \chi(P'_{V_{\mathcal H}})-\chi(P_V')^{(\mathcal{T})}= (31,-1)= 15\chi(\rho_0) + 16\chi(\rho_1).$$
and 
$$\chi(P'_E)= \chi(P'_{E_{\mathcal P \mathcal P}})+\chi( P_{E_{\mathcal P \mathcal H}}) + \chi(P'_{E_{\mathcal H \mathcal H}\parallel}\otimes I_2)+ \chi(P_{V_{\mathcal H}})=(32,-6)= 13\chi(\rho_0) + 19\chi(\rho_1). $$
Thus, by comparing the coefficients of the characters of the irreducible representations, we see that $(G,p,\ell)$ has two fully-symmetric infinitesimal flexes  and three anti-symmetric self-stresses.

The two detected infinitesimal flexes are shown in Figure~\ref{fig:cube2}(a) and (b). Note that there exists a third independent fully-symmetric infinitesimal flex, as shown in Figure~\ref{fig:cube2}(c). This one is not detected with our method, as it is a consequence of the special geometry of the configuration of the framework. If we place the points `generically' with respect to the extrusion in the direction of the $x$-axis, i.e., we change the position of the four points on the left face of the cube (and, by extrusion, the four points on the right face of the cube) so that they form a trapezoid rather than a square, then the infinitesimal flex shown in (c) disappears, as the top and bottom face can no longer be kept parallel if we try to shear the framework in the direction of the $y$-axis.

\begin{figure}[htp]
\begin{center}
\begin{tikzpicture}[very thick,scale=0.4]
\tikzstyle{every node}=[circle, draw=black, fill=white, inner sep=0pt, minimum width=3pt];
      
        \path (0,0) node (p1) {} ;
        \path (3,0) node (p2) {} ;
        \path (0,3) node (p3) {} ;
         \path (3,3) node (p4) {} ;
        \path (1.5,1.5) node (p5) {} ;
        \path (4.5,1.5) node (p6) {} ;
      \path (1.5,4.5) node (p7) {} ;
        \path (4.5,4.5) node (p8) {} ;
        
        \draw (p1)  --  (p2);
         \draw (p1)  --  (p3);
    \draw (p4)  --  (p2);
         \draw (p4)  --  (p3);
         
           \draw (p5)  --  (p7);
         \draw (p7)  --  (p8);
    \draw (p6)  --  (p5);
         \draw (p6)  --  (p8);
         
           \draw (p1)  --  (p5);
         \draw (p2)  --  (p6);
    \draw (p4)  --  (p8);
         \draw (p7)  --  (p3);

          \draw[gray,->] (p1)  --  (-0.8,0);
              \draw[gray,->] (p2)  --  (2.2,0);
             \draw[gray,->] (p3)  --  (0.8,3);
              \draw[gray,->] (p4)  --  (3.8,3);
           \draw[gray,->] (p5)  --  (0.7,1.5);
              \draw[gray,->] (p6)  --  (3.7,1.5);
             \draw[gray,->] (p7)  --  (2.3,4.5);
              \draw[gray,->] (p8)  --  (5.3,4.5);
          
                  \node [draw=white, fill=white] (a) at (2,-1.1) {(a)}; 
      \end{tikzpicture}
           \hspace{0.3cm}
      \begin{tikzpicture}[very thick,scale=0.4]
\tikzstyle{every node}=[circle, draw=black, fill=white, inner sep=0pt, minimum width=3pt];
      
        \path (0,0) node (p1) {} ;
        \path (3,0) node (p2) {} ;
        \path (0,3) node (p3) {} ;
         \path (3,3) node (p4) {} ;
        \path (1.5,1.5) node (p5) {} ;
        \path (4.5,1.5) node (p6) {} ;
      \path (1.5,4.5) node (p7) {} ;
        \path (4.5,4.5) node (p8) {} ;
        
        \draw (p1)  --  (p2);
         \draw (p1)  --  (p3);
    \draw (p4)  --  (p2);
         \draw (p4)  --  (p3);
         
           \draw (p5)  --  (p7);
         \draw (p7)  --  (p8);
    \draw (p6)  --  (p5);
         \draw (p6)  --  (p8);
         
           \draw (p1)  --  (p5);
         \draw (p2)  --  (p6);
    \draw (p4)  --  (p8);
         \draw (p7)  --  (p3);

          \draw[gray,->] (p1)  --  (-0.8,0);
              \draw[gray,->] (p2)  --  (2.2,0);
             \draw[gray,->] (p3)  --  (-0.8,3);
              \draw[gray,->] (p4)  --  (2.2,3);
           \draw[gray,->] (p5)  --  (2.3,1.5);
              \draw[gray,->] (p6)  --  (5.3,1.5);
             \draw[gray,->] (p7)  --  (2.3,4.5);
              \draw[gray,->] (p8)  --  (5.3,4.5);
          
                  \node [draw=white, fill=white] (a) at (2,-1.1) {(b)}; 
                  \end{tikzpicture}
       \hspace{0.3cm}
              \begin{tikzpicture}[very thick,scale=0.4]
\tikzstyle{every node}=[circle, draw=black, fill=white, inner sep=0pt, minimum width=3pt];
      
        \path (0,0) node (p1) {} ;
        \path (3,0) node (p2) {} ;
        \path (0,3) node (p3) {} ;
         \path (3,3) node (p4) {} ;
        \path (1.5,1.5) node (p5) {} ;
        \path (4.5,1.5) node (p6) {} ;
      \path (1.5,4.5) node (p7) {} ;
        \path (4.5,4.5) node (p8) {} ;
        
        \draw (p1)  --  (p2);
         \draw (p1)  --  (p3);
    \draw (p4)  --  (p2);
         \draw (p4)  --  (p3);
         
           \draw (p5)  --  (p7);
         \draw (p7)  --  (p8);
    \draw (p6)  --  (p5);
         \draw (p6)  --  (p8);
         
           \draw (p1)  --  (p5);
         \draw (p2)  --  (p6);
    \draw (p4)  --  (p8);
         \draw (p7)  --  (p3);

          \draw[gray,->] (p1)  --  (0.6,0.6);
              \draw[gray,->] (p2)  --  (3.6,0.6);
             \draw[gray,->] (p3)  --  (-0.6,2.4);
              \draw[gray,->] (p4)  --  (2.4,2.4);
           \draw[gray,->] (p5)  --  (2.1,2.1);
              \draw[gray,->] (p6)  --  (5.1,2.1);
             \draw[gray,->] (p7)  --  (0.9,3.9);
              \draw[gray,->] (p8)  --  (3.9,3.9);
          
                  \node [draw=white, fill=white] (a) at (2,-1.1) {(c)}; 
      \end{tikzpicture}
       \end{center}
\caption{Fully-symmetric infinitesimal flexes of the constrained cube. (a) shears the front and back face; (b) shears the top and bottom face; (c) shears the left and right face. The flexes shown in (a) and (b) are detected via the symmetry-adapted analysis. The flex shown in (c) is not detected, since it disappears if the points are placed `generically' with respect to the extrusion symmetry in the direction of the $x$-axis.}
\label{fig:cube2}
\end{figure}
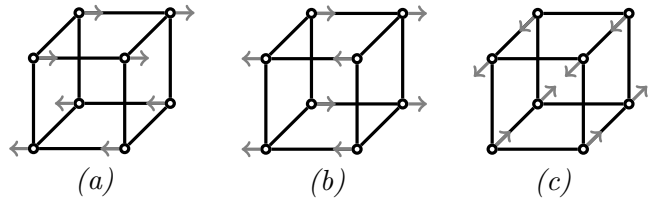

If instead of the front and back face of the cube we omit the coplanarity constraint for the points of the bottom and top or the left and right face (and keep the coplanarity constraints of the points of the other 4 faces), then the same basic analysis as above can be applied to those frameworks. In each case we obtain the same character counts and hence detect two fully-symmetric infinitesimal flexes. This double counts each infinitesimal flex and hence we find three independent infinitesimal flexes. They are exhibited by the physical model built of Polydron shown in Figure~\ref{fig:cubepoly}.  By taking 6 rings of 4  panels each and attaching them to each other along the edges of a cube all point-point distances are preserved and all four points of each face are forced to remain coplanar (and opposite faces parallel). This simple structure has been used as a fundamental building block of certain meta-materials \cite{nature}, and the flexes detected with our method (see Figure~\ref{fig:cube2}) allow the highly versatile reconfigurability of these meta-materials.

\begin{figure}[htb]
\begin{center} 
\includegraphics[scale=0.025,angle=-90]{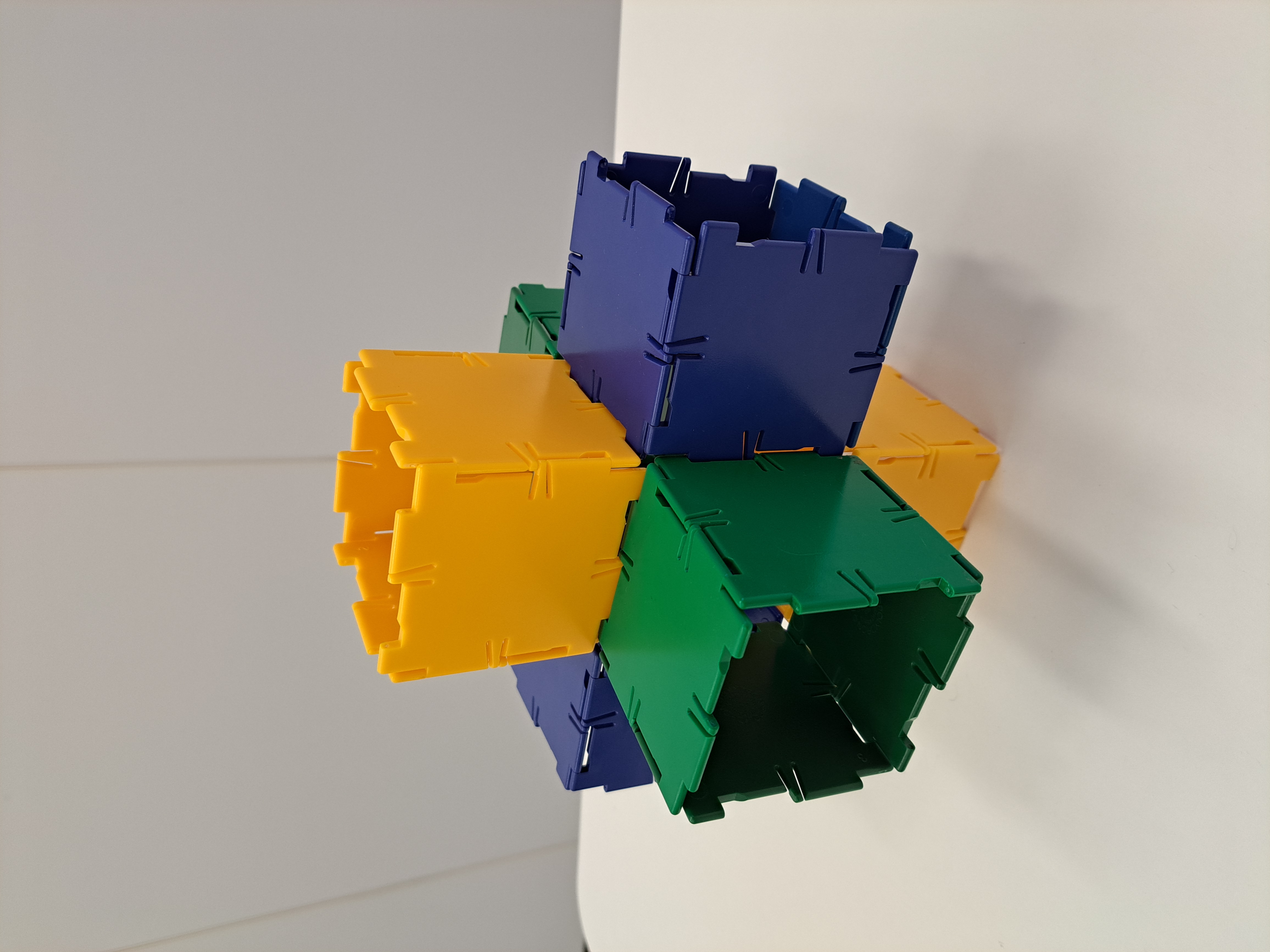}\hspace{0.4cm}  %FIGURE  
\includegraphics[scale=0.025,angle=-90]{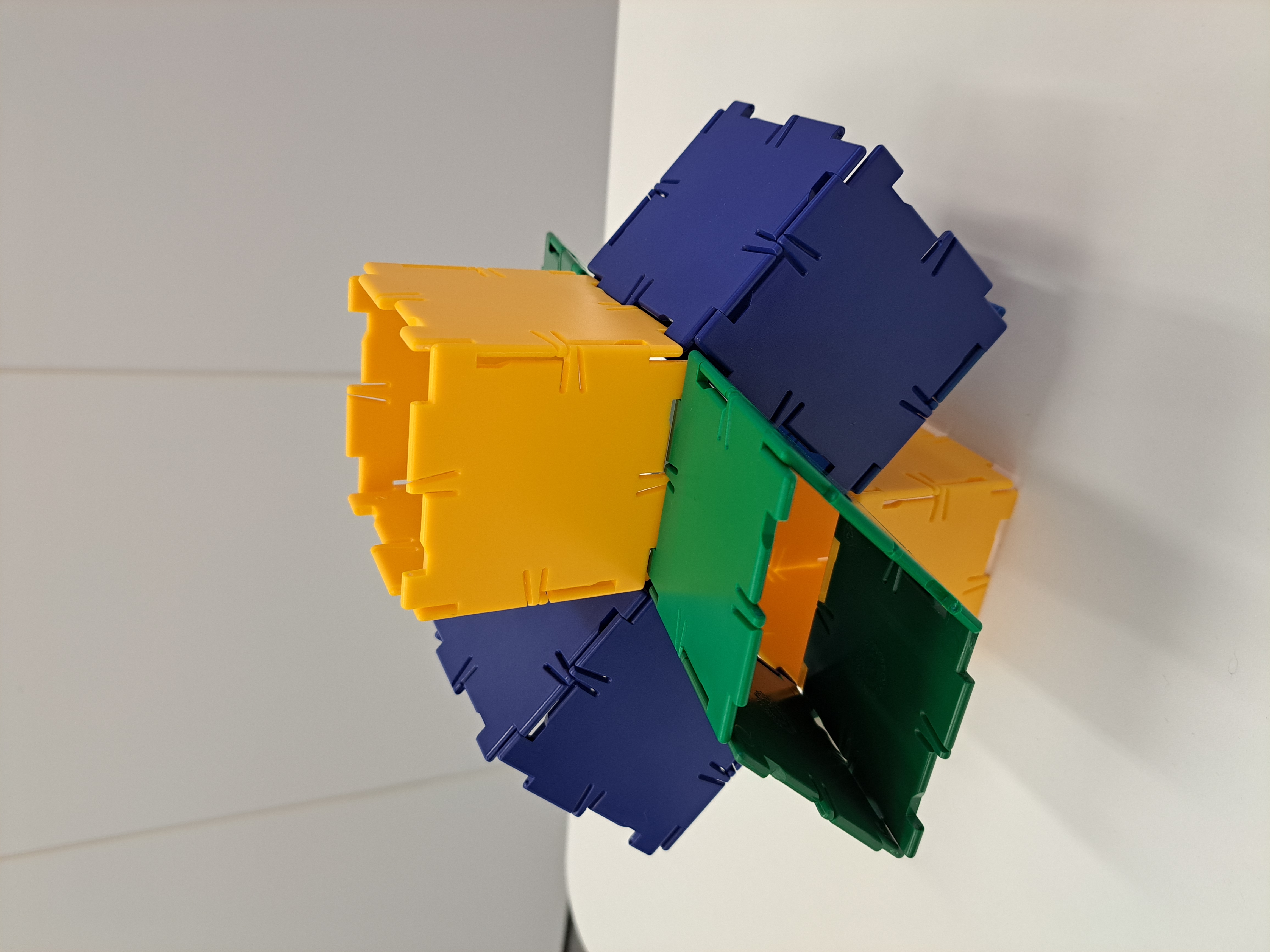}\hspace{0.4cm}   %FIGURE      
\includegraphics[scale=0.025,angle=-90]{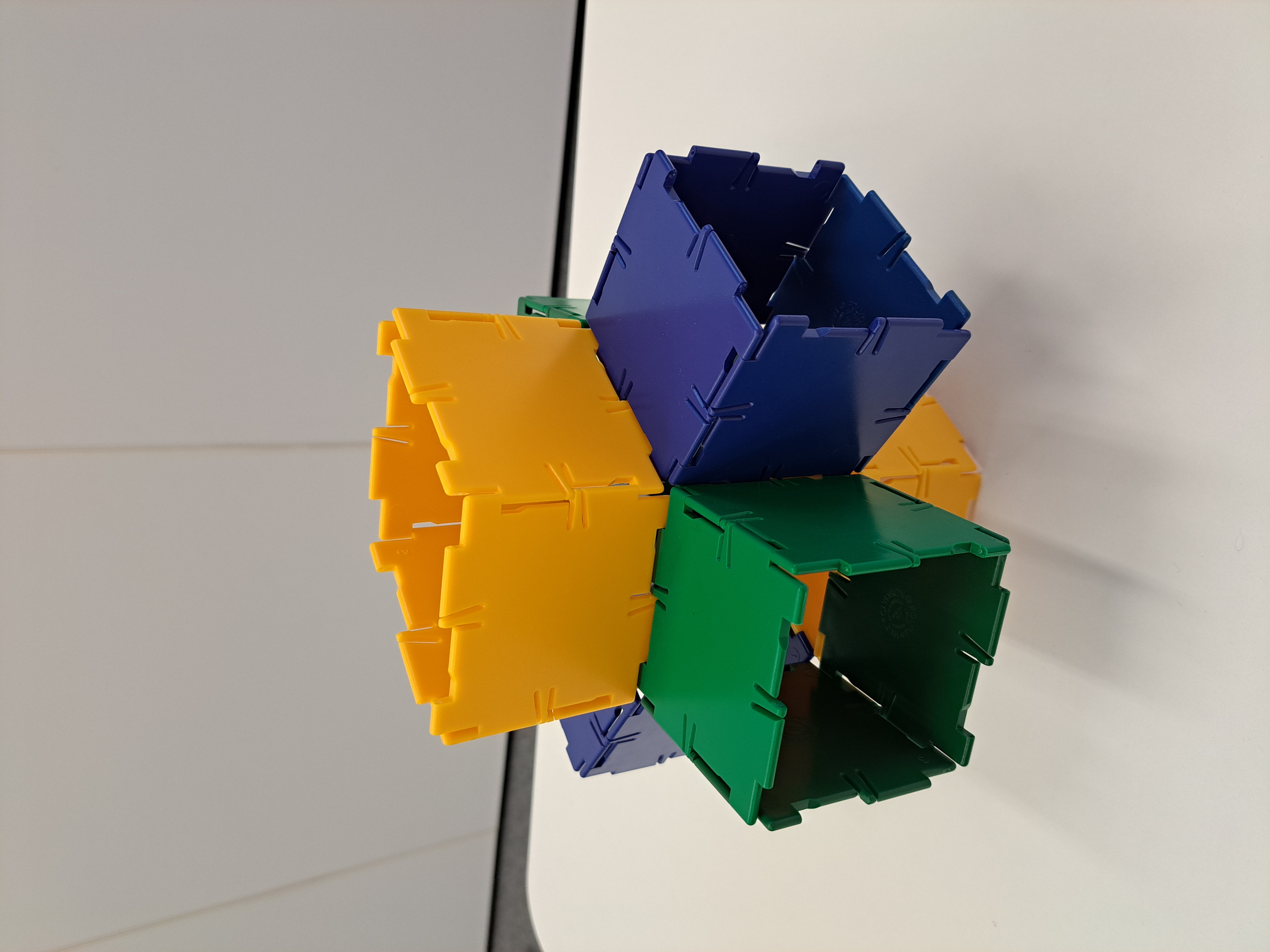}\hspace{0.4cm}  %FIGURE      
\includegraphics[scale=0.025,angle=-90]{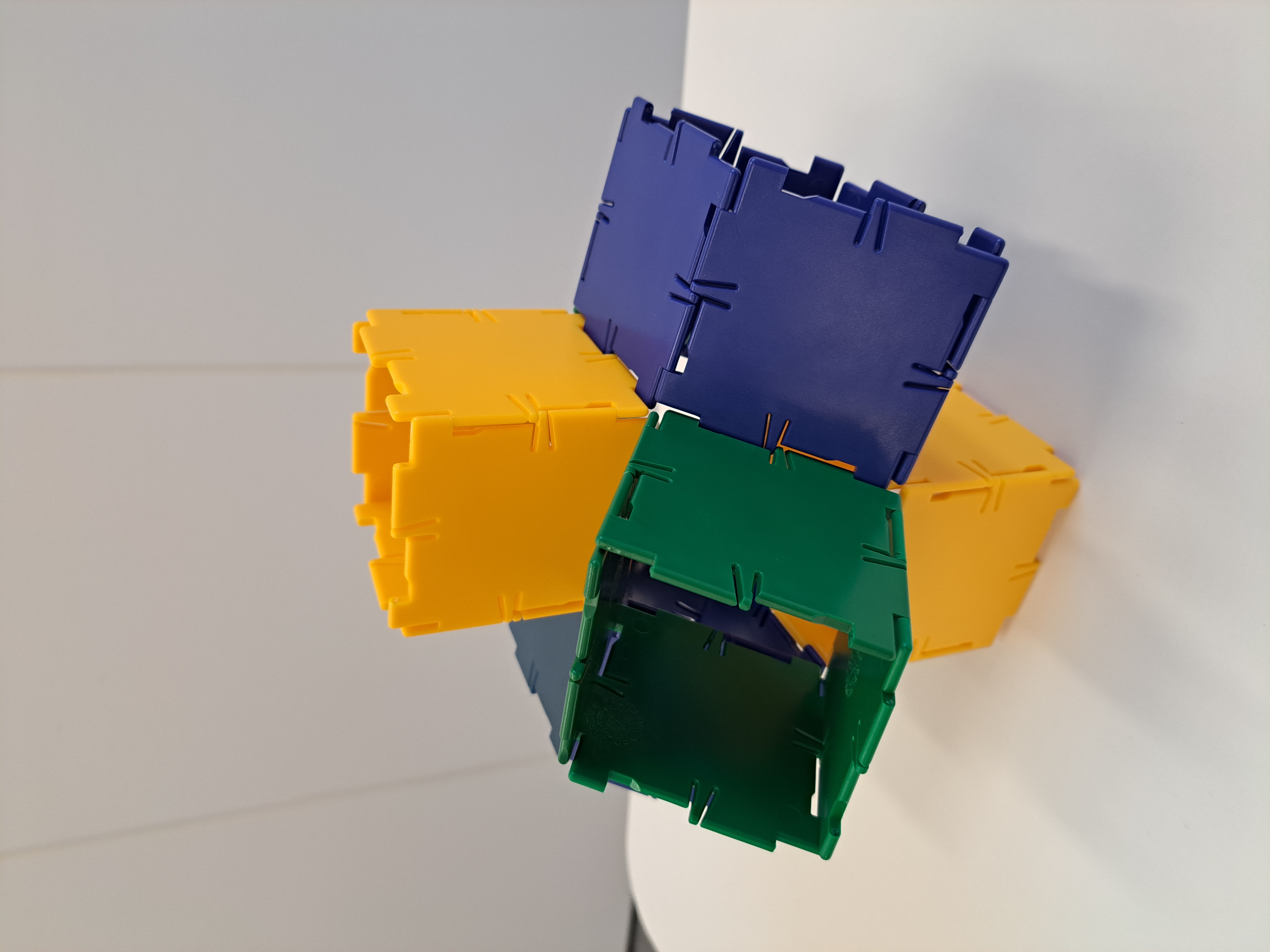}  %FIGURE          
\caption{Physical model of a constrained cube which exhibits the three shearing flexes shown in Figure~\ref{fig:cube2}.}
\label{fig:cubepoly}
\end{center}
\end{figure}

\end{example}

A general observation about the infinitesimal flexibility of point-hyperplane frameworks with $t$-fold extrusion symmetry is the following. 

If the point-hyperplane framework has no hyperplane that is fixed by an extrusion (i.e. $F_h=\emptyset$ for all $h=1,\ldots, t$), then it clearly has an infinitesimal flex which shears the framework orthogonally to an extrusion direction (as was the case for bar-joint frameworks -- recall the discussion in Remark~\ref{rem:rotmot}). 

If the point-hyperplane framework has a hyperplane that is fixed by an extrusion, but there is no point whose distance is fixed to this hyperplane, then there also exists  an infinitesimal flex, because that hyperplane can be displaced on its own in a parallel fashion preserving all of its angle and parallel constraints.

 If however, there exists a hyperplane that is fixed by an extrusion symmetry and there also exists a point-hyperplane distance constraint for this hyperplane, then the  point-hyperplane framework may be infinitesimally rigid.  For example, if we take $H$ to be a $3$-cycle with one point vertex and two line vertices and extrude a framework on $H$ in $\mathbb{R}^2$ along one of the lines, then we obtain an infinitesimally rigid point-line framework. See Figure~\ref{fig:simpleex} for an illustration.

\begin{figure}[htp]
\begin{center}
\begin{tikzpicture}[very thick,scale=0.7]
\tikzstyle{every node}=[circle, draw=black, fill=white, inner sep=0pt, minimum width=5pt];
     \node [draw=white, fill=white] (a) at (-0.5,1) {\tiny $w_1$};
              \node [draw=white, fill=white] (a) at (-0.5,-0.3) {\tiny $w_2$};
             \node [draw=white, fill=white] (a) at (1.5,0.5) {\tiny $v_1$};
                           
        \path (0,-0.3) node[fill=black] (p1) {} ;
        \path (0,1) node[fill=black] (p2) {} ;
        \path (1,0.5) node (p3) {} ;
        
         \draw (p1)  --  (p3);
        \draw (p3)  --  (p2);
 \draw (p1)  --  (p2);
            
        \node [draw=white, fill=white] (a) at (0.5,-1.6) {(a)};
      \end{tikzpicture}    
            \hspace{0.5cm}
  \begin{tikzpicture}[very thick,scale=0.7]
\tikzstyle{every node}=[circle, draw=black, fill=white, inner sep=0pt, minimum width=4pt];
     \node [draw=white, fill=white] (a) at (0.3,0.3) {\tiny $p_1$};

        \path (0,0) node (p1) {} ;
       
           \draw(-2,-0.5)--(1.5,-0.5);    
             \draw(-2,-1)--(0,1);   
             
               \draw[dotted](p1)--(0,-0.5);  
                 \draw[dotted](p1)--(-0.5,0.5);  
                 
             \node [draw=white, fill=white] (a) at (0.5,-0.8) {\tiny $\ell_2$};   
                \node [draw=white, fill=white] (a) at (-1.2,0.3) {\tiny $\ell_1$}; 

\draw[dotted] (-0.7,-0.5) arc
    [
        start angle=0,
        end angle=75,
        x radius=0.5cm,
        y radius =0.5cm
    ] ;
                
        \node [draw=white, fill=white] (a) at (0,-1.6) {(b)};
      \end{tikzpicture} 
      \hspace{0.5cm}   
       \begin{tikzpicture}[very thick,scale=0.7]
\tikzstyle{every node}=[circle, draw=black, fill=white, inner sep=0pt, minimum width=4pt];
   \node [draw=white, fill=white] (a) at (-0.7,-0.8) {\tiny $\ell_20$};   
                \node [draw=white, fill=white] (a) at (-1.3,0.2) {\tiny $\ell_1\star$};
   \node [draw=white, fill=white] (a) at (2.8,1.8) {\tiny $\ell_21$};   

     \node [draw=white, fill=white] (a) at (0.1,0.4) {\tiny $p_10$};
              \path (0,0) node (p1) {} ;
       
           \draw(-2,-0.5)--(1.5,-0.5);    
             \draw(-2,-1)--(2,3);   
             
               \draw[dotted](p1)--(0,-0.5);  
                 \draw[dotted](p1)--(-0.5,0.5);  
                           
                   \node [draw=white, fill=white] (a) at (2.3,2.3) {\tiny $p_11$};          
        \path (2,2) node (p2) {} ;
       
           \draw(-0,1.5)--(3.5,1.5);    
              
              \draw[dotted](p1)--(p2); 
               \draw[dotted](p2)--(2,1.5);  
                 \draw[dotted](p2)--(1.5,2.5);  
                 
               \draw[dotted](0.2,-0.5)--(2.2,1.5); 

\draw[dotted] (-0.7,-0.5) arc
    [
        start angle=0,
        end angle=75,
        x radius=0.5cm,
        y radius =0.5cm
    ] ;

\draw[dotted] (1.37,1.5) arc
    [
        start angle=0,
        end angle=75,
        x radius=0.5cm,
        y radius =0.5cm
    ] ;
               
        \node [draw=white, fill=white] (a) at (0,-1.6) {(c)};
      \end{tikzpicture} 
      \end{center}
     \vspace{-0.5cm}
\caption{A $\mathcal{PH}$-graph with  one point vertex and two line vertices (a), and its realisation as a point-line framework in the plane (b). If the framework in (b) is extruded along the line $\ell_1$, then we obtain the framework in (c), which is infinitesimally rigid.}
\label{fig:simpleex}
\end{figure}

We conclude this section with a remark about the effect of an affine transformation on the infinitesimal rigidity of a point-hyperplane framework.  In contrast to a bar-joint framework,  even an affine transformation can alter its infinitesimal rigidity properties. Consider, for example, a $3$-cycle which is realised as a point-line framework in the plane with two points and a line, where the edge between the two points is perpendicular to the line direction. This framework has an infinitesimal  flex, which disappears under any affine transformation which does not conserve the perpendicularity. This is illustrated in Figure~\ref{fig:simpleex2} and can easily be verfied by computing the ranks of the corresponding rigidity matrices.

\begin{figure}[htp]
\begin{center}
\begin{tikzpicture}[very thick,scale=0.7]
\tikzstyle{every node}=[circle, draw=black, fill=white, inner sep=0pt, minimum width=5pt];
     \node [draw=white, fill=white] (a) at (-0.4,1) {\tiny $v_1$};
              \node [draw=white, fill=white] (a) at (-0.4,-0.3) {\tiny $v_2$};
             \node [draw=white, fill=white] (a) at (1.5,0.5) {\tiny $w_1$};
                           
        \path (0,-0.3) node (p1) {} ;
        \path (0,1) node (p2) {} ;
        \path (1,0.5) node[fill=black] (p3) {} ;
        
         \draw (p1)  --  (p3);
        \draw (p3)  --  (p2);
 \draw (p1)  --  (p2);
            
        \node [draw=white, fill=white] (a) at (0.5,-1.6) {(a)};
      \end{tikzpicture}    
            \hspace{0.5cm}
  \begin{tikzpicture}[very thick,scale=0.7]
\tikzstyle{every node}=[circle, draw=black, fill=white, inner sep=0pt, minimum width=4pt];
     \node [draw=white, fill=white] (a) at (-0.5,1) {\tiny $p_1$};             
            \node [draw=white, fill=white] (a) at (-0.5,-0.5) {\tiny $p_2$};                
        \path (0,-0.5) node (p1) {} ;
       \path (0,1) node (p2) {} ;
       \draw[dotted](p1)--(p2);
 \draw(-2,0.25)--(2,0.25);

\draw[dotted](0.1,-0.5)--(0.1,0.25);
\draw[dotted](-0.1,1)--(-0.1,0.25);
         
                \node [draw=white, fill=white] (a) at (-1.5,0.6) {\tiny $\ell_1$};

        \node [draw=white, fill=white] (a) at (0,-1.6) {(b)};
      \end{tikzpicture} 
                  \hspace{0.5cm}
  \begin{tikzpicture}[very thick,scale=0.7]
\tikzstyle{every node}=[circle, draw=black, fill=white, inner sep=0pt, minimum width=4pt];
     \node [draw=white, fill=white] (a) at (0.4,1) {\tiny $p_1$};             
            \node [draw=white, fill=white] (a) at (-0.4,-0.5) {\tiny $p_2$};                
        \path (0,-0.5) node (p2) {} ;
       \path (0,1) node (p1) {} ;
       \draw[dotted](p1)--(p2);
 \draw(-1.2,1.45)--(1.2,-0.95);

\draw[dotted](p1)--(-0.4,0.6);
\draw[dotted](p2)--(0.4,-0.1);
         
                \node [draw=white, fill=white] (a) at (-1.5,0.6) {\tiny $\ell_1$};

        \node [draw=white, fill=white] (a) at (0,-1.6) {(c)};
      \end{tikzpicture} 
      \end{center}
     \vspace{-0.5cm}
\caption{A $\mathcal{PH}$-graph with two point vertices and one line vertex (a), and two realisations as a point-line framework in the plane, where in (b) the direction of the line is perpendicular to the edge joining the points. The framework in (b) is infinitesimally flexible, whereas the one in (c) is not.}
\label{fig:simpleex2}
\end{figure}

On the other hand, the following lemma shows that a bar-joint or point-hyperplane framework with $t$-fold extrusion symmetry retains the $t$-fold extrusion symmetry under a general affine transformation. We will use this lemma in Section~\ref{sec:linpush}, where we will consider finite flexibility. 

Recall that an affine transformation
$A_{A,v}$ in $\mathbb{R}^d$ is determined by an invertible matrix $A \in \mathbb{R}^d\times \mathbb{R}^d$ and a vector $b \in \mathbb{R}^d$. 

\begin{lem}\label{affine-invariance-hyperplane}
Let $G=H\square_{F_1,\ldots, F_t} K_2^{\square t}$  and let $(G,p,\ell)$ be a point-hyperplane framework with $t$-fold extrusion symmetry with extrusion directions $\tau_1,\dots,\tau_t$. Let $A_{A,b}$ be an affine transformation of  $\mathbb{R}^d$.  Then $(G,A_{A,b}p,A_{A,b}\ell)$ has $t$-fold extrusion symmetry with extrusion directions $A\tau_1,\dots,A\tau_t$.
\end{lem}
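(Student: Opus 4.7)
The plan is to verify directly that the transformed framework $(G, A_{A,v}p, A_{A,v}\ell)$ satisfies conditions (i)--(iv) of Definition~\ref{def:extrsymph} with respect to the proposed extrusion directions $A\tau_1,\ldots,A\tau_t$, using the extrusion symmetry of $(G,p,\ell)$ with directions $\tau_1,\ldots,\tau_t$. The underlying graph and extrusion action $\theta$ are unchanged, so we only need to check the four coordinate-level identities.

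First, I would compute the action of $A_{A,v}$ on the data. On point vertices this is immediate: the new point $p'_i = Ap_i + v$. On hyperplane vertices, the hyperplane $\{x : \langle a_i, x\rangle = r_i\}$ is mapped to $\{y : \langle A^{-T}a_i, y\rangle = r_i + \langle A^{-T}a_i, v\rangle\}$, so in the projective convention used in this paper the new pair is $(a'_i, r'_i) = (A^{-T}a_i,\, r_i + \langle A^{-T}a_i, v\rangle)$. Condition (i) is then a one-line check: applying $A_{A,v}$ to the relation $p_{\theta(\gamma)((v_j,\be))} = p_{(v_j,\be)} + \sum_X \tau_i - \sum_Y \tau_g$ and using linearity of $A$ gives exactly the same relation with $\tau_i$ replaced by $A\tau_i$. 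Condition (ii) is similarly immediate: $A^{-T}$ is applied to both sides of $a_{\theta(\gamma)((w_j,\be))} = a_{(w_j,\be)}$.

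Condition (iii) is the point where one might worry, but it follows from the key identity
\[
\langle A\tau_h,\, A^{-T}a_{(w_j,\be)}\rangle = \tau_h^T A^T A^{-T} a_{(w_j,\be)} = \langle \tau_h, a_{(w_j,\be)}\rangle,
\]
so the new extrusion direction $A\tau_h$ lies in the new hyperplane if and only if the old $\tau_h$ lies in the old one, preserving the indexing by $F_1,\ldots,F_t$.

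The main bookkeeping is condition (iv), which involves the translation component. I would expand the left-hand side using the formula for $r'$ and condition (iv) for $(G,p,\ell)$:
\[
r'_{\theta(\gamma)((w_j,\be))} = r_{(w_j,\be)} + \sum_{i\in X}\langle a_{(w_j,\be)},\tau_i\rangle - \sum_{g\in Y}\langle a_{(w_j,\be)},\tau_g\rangle + \langle A^{-T}a_{(w_j,\be)}, v\rangle,
\]
where I have also used condition (ii) to replace $a_{\theta(\gamma)((w_j,\be))}$ by $a_{(w_j,\be)}$ inside the translation term. The right-hand side expands to the same expression, since $\langle A^{-T}a_{(w_j,\be)}, A\tau_i\rangle = \langle a_{(w_j,\be)}, \tau_i\rangle$ by the same identity used for (iii). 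The expected obstacle is just keeping the $v$-dependent translation term straight; once one recognises that $\langle A^{-T}a, A\tau\rangle = \langle a,\tau\rangle$ does all the work, both (iii) and (iv) reduce to restating the corresponding properties of the original framework. The bar-joint case is subsumed by taking $V_{\mathcal H}=\emptyset$.
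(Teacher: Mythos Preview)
Your proof is correct and follows essentially the same approach as the paper: both verify conditions (i)--(iv) of Definition~\ref{def:extrsymph} directly, with the key algebraic identity $\langle A^{-T}a,\, A\tau\rangle = \langle a,\tau\rangle$ (equivalently $\langle aA^{-1}, A\tau\rangle = \langle a,\tau\rangle$ in the paper's row-vector convention) doing the work for (iii) and (iv). The only cosmetic difference is that the paper phrases (ii) and (iii) geometrically (parallels stay parallel, containment is preserved) and handles (iv) by noting that the difference $r_1 - r_2$ of offsets for parallel hyperplanes is unchanged, whereas you unpack the same facts algebraically.
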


\begin{proof}
The affine transformation of a point $p \in \mathbb{R}^d$ is given by $A_{A,b}p =Ap+b$.  
The affine transformation of the hyperplane $\ell=(a,r)$ is determined by the affine transformation of the points on the hyperplane and is given by 
$A_{A,b}(a,r)=(aA^{-1},r+\langle a  A^{-1},b\rangle)$.  

We will show that if the conditions of Definition~\ref{def:extrsymph} are  satisfied by $(G,p,\ell)$ and vectors $\tau_1,\dots,\tau_t$ then they are also satisified by $(G,A_{A,b}p,A_{A,b}\ell)$ and vectors $A\tau_1,\dots,A\tau_t$.

The requirement (i) that
$$A_{A,b}p_{\theta(\gamma)((v_j,\be))}=
A_{A,b}p_{(v_j,\be)}+ \sum_{i\in X}A \tau_i - \sum_{g\in Y} A\tau_g,$$ follows immediately from the fact that $A$ is a linear operator in $\mathbb{R}^d$.

The requirement (ii) follows from the fact that parallel hyperplanes remain parallel under an affine transformation.

The requirement (iii) follows from the fact that if a displacement vector lies in a hyperplane then it also lies in the hyperplane after an affine transformation.

The property (iv) for $(G,p,\ell)$ says that for every $(w_j,\bm e)\in V_{\mathcal{H}}$  and every $\gamma\in \mathbb{Z}_2^t$ we have
 $$r_{\theta(\gamma)((w_j,\bm e))}-r_{(w_j,\bm e)}= \sum_{i\in X} \langle a_{(w_j,e)},\tau_i\rangle - \sum_{g\in Y} \langle a_{(w_j,e)},\tau_g\rangle,$$
 where $X$ is the set of indices in $\{1,\ldots, t\}$ for which $\be$ has an entry of $0$ and $\gamma$ has an entry of $1$, and  $Y$ is the set of indices in $\{1,\ldots, t\}$ for which both $\be$ and $\gamma$ have an entry of $1$. 

If $(a,r_1)$ and $(a,r_2)$ are two parallel hyperplanes then $r_1-r_2=A_{A,b}r_1-A_{A,b}r_2$.
%because $A_{A,v}$ is a linear operator on $\mathbb{P}^d(\mathbb{R})$.  
Hence $A_{A,b}r_{\theta(\gamma)((w_j,\bm e))}-A_{A,b}r_{(w_j,\bm e)}=\sum_{i\in X} \langle a_{(w_j,e)},\tau_i\rangle - \sum_{g\in Y} \langle a_{(w_j,e)},\tau_g\rangle=\sum_{i\in X} \langle a_{(w_j,e)}A^{-1},A\tau_i\rangle - \sum_{g\in Y} \langle a_{(w_j,e)}A^{-1},A\tau_g\rangle$. This is the required property (iv) for extrusion symmetry in the framework $(G,A_{A,b}p,A_{A,b}\ell)$ with extrusion directions $A\tau_1,\dots,A\tau_t$.
\end{proof}

\section{Detecting finite flexibility and local redundancy} \label{sec:finiteflex}

\subsection{A general method for detecting finite motions}

In this section we present a general method for detecting finite motions (and local redundancy -- see Definition~\ref{localred}) in point-hyperplane frameworks. Our approach is analogous to the one presented  in \cite{BSfinite} for symmetric bar-joint frameworks. (See also~\cite{guestfow,kangguest}.) 
Let $G=(V_{\mathcal P}\cup V_{\mathcal H}, E)$ be a \emph{decorated} $\mathcal P \mathcal H$-graph with $|V_{\mathcal P}|=n$ and $|V_{\mathcal H}|=k$ and let $E_{\mathcal{HH}\parallel}$ and $E_{\mathcal{HH}\not\parallel}$ be the partition sets of $E_\mathcal{HH}$ induced by the decoration (recall the definition in Section~\ref{sec:parcon}). We fix an ordering of the edges of $E \setminus E_{\mathcal{HH}\parallel}$ so that the edges in $E_{\mathcal P \mathcal P}$ come first, followed by the edges in $E_{\mathcal P \mathcal H}$ and finally the edges in $E_{\mathcal H \mathcal H\not\parallel}$.

Let 
$\mathbb{W}_G=\{(p,\ell)\in \mathbb{R}^{dn}\oplus \mathbb{R}^{(d+1)k}:a_i\parallel a_j \forall ij \in E_{\mathcal{HH}\parallel}\}$.  It is straightforward to verify that $\mathbb{W}_G$ is a linear subspace of $\mathbb{R}^{dn}\oplus\mathbb{R}^{(d+1)k}$.

We then define the measurement map %$f_{G}:\mathbb{R}^{dn+(d+1)k}\to \mathbb{R}^{|E|+(d-2)|E_{\mathcal{HH}\parallel}|+k}$  
$f_{G}:\mathbb{W}_G\to \mathbb{R}^{|E|+k}$  by 
\begin{displaymath}
f_{G}\big(p_{1},\ldots,p_{n},\ell_1,\ldots, \ell_k\big)=\big(\ldots, \|p_{i}-p_{j}\|^2,\ldots, \langle p_{i'}, a_{j'}\rangle-r_{j'}, \ldots, \langle a_{i''}, a_{j''}\rangle, 
%f_{a_i'',a_j''},
\ldots,\langle a_j, a_j\rangle\ldots\big)\textrm{, }
\end{displaymath}
where $p_{i}\in \mathbb{R}^{d}$ for all $i=1,\ldots,n$,  $\ell_j=(a_j,r_j)\in \mathbb{R}^{d+1}$ for all $j=1,\ldots,k$, and  where $\{i,j\}\in E_{\mathcal P \mathcal P}$,   $\{i',j'\}\in E_{\mathcal P \mathcal H}$ and $\{i'',j''\}\in E_{\mathcal {HH}\not\parallel}$.  

Let $G$ be a decorated point-hyperplane graph and let $K_{n+k}$ be the decorated complete point-hyperplane graph on the vertices of $V(G)$, where the decoration of $K_{n+k}$ is inherited from the decoration of $G$. 
%We say that $(X,p,\ell)$ is the completion of $(G,p,\ell)$ 
The partition of the edges in $E_\mathcal{HH}(K_{n+k})$ into $E_{\mathcal{HH}\parallel}(K_{n+k})$ and $E_{\mathcal{HH}\not\parallel}(K_{n+k})$ is well-defined and we have $E_{\mathcal{HH}\not\parallel}(G) \subseteq E_{\mathcal{HH}\not\parallel}(K_{n,k})$.

$f_{G}^{-1}\big(f_{G}(p,\ell)\big)$ is the set of all configurations $(q,m)$  of $n$ points and $k$ hyperplanes in 
$\mathbb{W}_G$ with the property that corresponding pairs of points $\{p_i,p_j\}$ and $\{q_i,q_j\}$  where $ij \in E_\mathcal{PP}$ have the same length, corresponding pairs of points and hyperplanes $\{p_i,\ell_j\}$ and $\{q_i, m_j \}$ where $ij \in E_{\mathcal P \mathcal H}$ have the same distance and corresponding pairs of hyperplanes $\{\ell_i ,\ell_j\}$ and $\{m_i ,m_j\}$ have the same angle if $ij \in E_{\mathcal{HH}\not\parallel}$ or are parallel if $ij \in E_{\mathcal{HH}\parallel}$. We have $f^{-1}_{K_{n+k}}\big(f_{K_{n+k}}(p,\ell)\big)\subseteq f_{G}^{-1}\big(f_{G}(p,\ell)\big)$ because $E(G) \subseteq E(K_{n+k})$.

\begin{defn}
\label{localred}
An edge $e$ is said to be \emph{locally redundant in $\mathbb{W}_G$} at $(p,\ell)$ if there is a neighborhood $N_{(p,\ell)}$  in $\mathbb{W}_G$
such that $f^{-1}_G (f_G(p,\ell))\cap N_{p,\ell}=f^{-1}_{G \setminus e}((f_{G\setminus e}(p,\ell)) \cap N_{p,\ell}$.
%emph{$\mathscr{A}$-preserving flex} of a point-hyperplane framework $(G,p,\ell)$ with $(p,\ell)\in \mathscr{A}$ is a differentiable path $x:[0,1]\to \mathscr{A}$ such that $x(0)=(p,\ell)$ and $x(t)\in \tilde{f}_{G}^{-1}\big(\tilde{f}_{G}((p,\ell))\big)\setminus \tilde{f}_{K_{n+k}}^{-1}\big(\tilde{f}_{K_{n+k}}((p,\ell))\big)$ for all $t\in (0,1]$.}
\end{defn}

Let $\mathscr{A}$ be a smooth submanifold of $\mathbb{W}_G$
%$\mathbb{R}^{dn+(d+1)k}$. associated with a framework $(G,p,\ell)$ and its completion $(X,p,\ell)$ 
and let $\tilde{f}_{G}:\mathscr{A}\to \mathbb{R}^{|E|+k}$ denote the restriction of the edge function $f_{G}$ to $\mathscr{A}$, and $\tilde{f}_{K}:\mathscr{A}\to \mathbb{R}^{\binom{n+k}{2}}$ denote the restriction of $f_K$ to $\mathscr{A}$. The Jacobian matrices of $\tilde{f}_{G}$ and $\tilde{f}_K$, evaluated at a point $(p,\ell)\in \mathscr{A}$, are denoted by $d\tilde{f}_{G}(p,\ell)$ and $d\tilde{f}_K(p,\ell)$, respectively.

\begin{defn}\label{symregulardef}
\emph{An element $(p,\ell)\in \mathscr{A}$ is said to be an \emph{$\mathscr{A}$-regular point of $G$} if there exists a neighborhood $N_{(p,\ell)}$ of $(p,\ell)$ in $\mathscr{A}$ so that $\textrm{rank } \big(d\tilde{f}_{G}((p,\ell))\big)=\textrm{max }\{\textrm{rank } \big(d\tilde{f}_{G}((q,m))\big)|\, (q,m)\in N_{(p,\ell)}\}$. An \emph{$\mathscr{A}$-regular point of $K_{n+k}$} is defined analogously.}
\end{defn}

\begin{defn}
\label{symmpreservflex}
\emph{An \emph{$\mathscr{A}$-preserving flex} of a point-hyperplane framework $(G,p,\ell)$ with $(p,\ell)\in \mathscr{A}$ is a differentiable path $x:[0,1]\to \mathscr{A}$ such that $x(0)=(p,\ell)$ and $x(T)\in \tilde{f}_{G}^{-1}\big(\tilde{f}_{G}((p,\ell))\big)\setminus \tilde{f}_{K_{n+k}}^{-1}\big(\tilde{f}_{K_{n+k}}((p,\ell))\big)$ for all $T\in (0,1]$.}
\end{defn}

Note that every point-hyperplane framework $(G,p,\ell)$ implies a well-defined affine subspace $\mathbb{W}_G$, a well-defined partition of $E_\mathcal{HH}$ and a well-defined edge map $f_G$ using the decoration of $G$ implied by $(G,p,\ell)$. Together with a subspace $\mathscr{A} \subset \mathbb{W}_G$,  $(G,p,\ell)$ also implies well-defined maps $\tilde{f}_G$ and $\tilde{f}_{K_{n+k}}$. 

\begin{lem}
\label{hs1}
Let $(G,p,\ell)$ be a point-hyperplane framework, let $\mathbb{W}_G$ be the subspace defined above and let $\mathscr{A}$ be an affine subspace of $\mathbb{W}_G$.

If $(p,\ell) \in \mathscr{A}$ is an $\mathscr{A}$-regular point of $G$, then there exists a neighborhood $N_{(p,\ell)}$ of $(p,\ell)$ in $\mathscr{A}$ such that $\tilde{f}_{G}^{-1}\big(\tilde{f}_{G}((p,\ell))\big)\cap N_{(p,\ell)}$ is a smooth manifold of dimension $\textrm{dim }\mathscr{A}-\textrm{rank }\big(d\tilde{f}_{G}((p,\ell))\big)$.
\end{lem}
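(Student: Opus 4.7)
The plan is to deduce this statement from the constant rank theorem of differential topology, applied to the smooth map $\tilde{f}_G$ on the smooth manifold $\mathscr{A}$. The hypothesis of $\mathscr{A}$-regularity is not quite the hypothesis of the constant rank theorem (which needs constant rank on a neighborhood, not just maximality at a point), so the first bridge to build is from the pointwise maximality to constant rank on an open set.

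First I would record that the rank function $(q,m)\mapsto \textrm{rank}\,(d\tilde{f}_G(q,m))$ on $\mathscr{A}$ is lower semi-continuous: if a $k\times k$ minor of $d\tilde{f}_G$ is non-zero at $(p,\ell)$, it remains non-zero on a neighborhood by continuity of determinants. Combined with the assumption that $(p,\ell)$ is an $\mathscr{A}$-regular point of $G$, so that $\textrm{rank}\,d\tilde{f}_G(p,\ell)$ attains the local maximum on some neighborhood $N_{(p,\ell)}$, this forces the rank to be \emph{constant} on a (possibly smaller) neighborhood $N'\subseteq N_{(p,\ell)}$ of $(p,\ell)$, with constant value $r:=\textrm{rank}\,d\tilde{f}_G(p,\ell)$.

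Next, since $\mathscr{A}$ is given as a smooth manifold (an affine subspace of $\mathbb{W}_G$, which itself is a linear subspace of $\mathbb{R}^{dn}\oplus\mathbb{R}^{(d+1)k}$), and $\tilde{f}_G$ is the restriction to $\mathscr{A}$ of the polynomial map $f_G$ and hence smooth, I would apply the constant rank theorem to $\tilde{f}_G\colon N'\to \mathbb{R}^{|E|+k}$. This supplies local coordinates around $(p,\ell)$ in $\mathscr{A}$ and around $\tilde{f}_G(p,\ell)$ in $\mathbb{R}^{|E|+k}$ in which $\tilde{f}_G$ takes the normal form $(x_1,\ldots,x_{\dim\mathscr{A}})\mapsto (x_1,\ldots,x_r,0,\ldots,0)$. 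Reading off the fibre through $(p,\ell)$ in these coordinates shows that $\tilde{f}_G^{-1}(\tilde{f}_G(p,\ell))\cap N'$ is, locally at $(p,\ell)$, the zero set of the first $r$ coordinate functions and hence a smooth embedded submanifold of $\mathscr{A}$ of dimension $\dim\mathscr{A}-r$, which is exactly the claimed conclusion after relabelling $N'$ as the neighborhood $N_{(p,\ell)}$ in the statement.

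The only real obstacle is the bookkeeping step of passing from pointwise maximality to constant rank on a neighborhood; everything after that is a direct application of the constant rank theorem. No use of the symmetry structure or of the partition $E_{\mathcal{HH}\parallel}\cup E_{\mathcal{HH}\not\parallel}$ is needed for the lemma itself, since those enter only through the definition of $\mathbb{W}_G$ and $f_G$, both of which are smooth on the ambient space.
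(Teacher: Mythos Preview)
Your argument is correct and is essentially the same as the paper's: the paper simply cites Proposition~2 of Asimow--Roth \cite{AR}, whose proof is precisely the lower-semicontinuity-of-rank plus constant-rank-theorem argument you wrote out. You have just made explicit what the paper leaves to the reference.
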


\begin{proof} The result follows immediately from Proposition 2 in \cite{AR}. 
\end{proof}

\begin{thm}
\label{flexthm1}
Let $(G,p,\ell)$ be a point-hyperplane framework  
with $(p,\ell)\in \mathscr{A}$. If $(p,\ell)$ is an $\mathscr{A}$-regular point of $G$ 
and the points and hyperplanes of  $(G,p,\ell)$ affinely span  $\mathbb{R}^{d}$, then
\begin{itemize}
\item[(i)] $\textrm{rank }\big(d\tilde{f}_{G}((p,\ell))\big)= \textrm{rank }\big(d\tilde{f}_{K_{n+k}}((p,\ell))\big)$ if and only if $(G,p,\ell)$ has no $\mathscr{A}$-preserving flex;
\item[(ii)] $\textrm{rank }\big(d\tilde{f}_{G}((p,\ell))\big)< \textrm{rank }\big(d\tilde{f}_{K_{n+k}}((p,\ell))\big)$ if and only if $(G,p,\ell)$ has an $\mathscr{A}$-preserving flex.
\end{itemize}
\end{thm}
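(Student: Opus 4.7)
The proof will adapt the classical Asimov--Roth argument \cite{AR} to the present setting, in the same spirit as the approach used in \cite{BSfinite} for symmetric bar-joint frameworks. Note first that statements (i) and (ii) are logically contrapositive, so it suffices to prove (ii). The central tool is Lemma~\ref{hs1}, which I plan to apply to \emph{both} $G$ and $K_{n+k}$ at $(p,\ell)$.

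The main technical step is to promote the $\mathscr{A}$-regularity of $(p,\ell)$ for $G$ to $\mathscr{A}$-regularity for $K_{n+k}$. The idea is that the kernel of $d\tilde{f}_{K_{n+k}}(q,m)$ at a point $(q,m) \in \mathscr{A}$ near $(p,\ell)$ equals the intersection of $T_{(q,m)}\mathscr{A}$ with the space of trivial infinitesimal motions of $(q,m)$. Under the assumption that the points and hyperplanes of $(p,\ell)$ affinely span $\mathbb{R}^d$, the same is true for all $(q,m)$ in a neighborhood, so the space of trivial infinitesimal motions has constant dimension $\binom{d+1}{2}$. Combined with the smoothness of the isometry group action on configurations and the smoothness of $\mathscr{A}$, the dimension of this intersection is locally constant, so $\textrm{rank}(d\tilde{f}_{K_{n+k}})$ is locally constant at $(p,\ell)$. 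With this in hand, Lemma~\ref{hs1} applied to $K_{n+k}$ yields that, shrinking $N$ if necessary, $M_K := \tilde{f}_{K_{n+k}}^{-1}(\tilde{f}_{K_{n+k}}(p,\ell)) \cap N$ is a smooth manifold of dimension $\dim \mathscr{A} - \textrm{rank}(d\tilde{f}_{K_{n+k}}(p,\ell))$, while $M_G := \tilde{f}_G^{-1}(\tilde{f}_G(p,\ell)) \cap N$ is a smooth manifold of dimension $\dim\mathscr{A} - \textrm{rank}(d\tilde{f}_G(p,\ell))$, with $M_K \subseteq M_G$.

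Once both level sets are smooth manifolds, the two directions of (ii) are straightforward. If $\textrm{rank}(d\tilde{f}_G(p,\ell)) = \textrm{rank}(d\tilde{f}_{K_{n+k}}(p,\ell))$, then $\dim M_G = \dim M_K$, and since a smooth submanifold contained in another of equal dimension coincides with it locally, $M_G = M_K$ near $(p,\ell)$; thus every differentiable path in $M_G$ starting at $(p,\ell)$ stays in $M_K$, so no $\mathscr{A}$-preserving flex exists. Conversely, if the inequality is strict, then $\dim M_G > \dim M_K$, so $M_K$ is a proper smooth submanifold of $M_G$. I then pick a tangent vector $v \in T_{(p,\ell)}M_G \setminus T_{(p,\ell)}M_K$ and, using a local chart on $M_G$, construct a smooth curve $x:[0,1]\to M_G$ with $x(0)=(p,\ell)$ and $\dot x(0)=v$; since $v \notin T_{(p,\ell)}M_K$, the curve leaves $M_K$ immediately and (after shrinking the parameter interval and using that $M_K$ is closed in $M_G$) stays outside $M_K$ for all $t\in(0,1]$, giving the required $\mathscr{A}$-preserving flex.

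The step I expect to be the main obstacle is the verification of $\mathscr{A}$-regularity of $(p,\ell)$ for $K_{n+k}$: while geometrically clear from the isometry orbit picture, a rigorous proof requires identifying the kernel of $d\tilde{f}_{K_{n+k}}(q,m)$ with the tangent space at $(q,m)$ of the orbit of rigid body motions intersected with $\mathscr{A}$, and then checking that this intersection has locally constant dimension under the spanning hypothesis. A secondary subtlety is ensuring the constructed flex $x$ leaves $M_K$ for \emph{every} $t\in(0,1]$, which follows from the transversality of the chosen tangent direction $v$ together with the fact that $M_K$ is a closed embedded submanifold of $M_G$.
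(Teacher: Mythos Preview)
Your proposal follows essentially the same approach as the paper: establish $\mathscr{A}$-regularity of $(p,\ell)$ for $K_{n+k}$, apply Lemma~\ref{hs1} to both $G$ and $K_{n+k}$, compare the dimensions of the resulting level-set manifolds $M_K\subseteq M_G$, and then produce a flex when the containment is proper. The paper is terser than you are at both ends: it simply asserts $\mathscr{A}$-regularity for $K_{n+k}$ in one line from the spanning hypothesis, and for the construction of the flex it cites the argument of Proposition~1 in \cite{AR} rather than writing out the tangent-vector/chart construction you describe.

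Two small remarks. First, your opening claim that (i) and (ii) are ``logically contrapositive'' is not literally correct as stated; their equivalence uses the inequality $\textrm{rank}\,d\tilde f_G\le \textrm{rank}\,d\tilde f_{K_{n+k}}$, which you only obtain later via $M_K\subseteq M_G$. Second, your justification that the intersection of $T\mathscr{A}$ with the space of trivial motions has \emph{locally constant} dimension is not quite complete: for two smoothly varying subspaces the intersection dimension is only upper semicontinuous in general, so one needs a further word on why no jump occurs at $(p,\ell)$ (the paper, as noted, simply asserts this). In the intended applications, $\mathscr{A}$ is an affine subspace of a specific symmetric type and the verification is straightforward, so this is not a serious obstacle, but you have correctly flagged it as the place where care is needed.
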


\begin{proof} Since  the points and hyperplanes of  $(G,p,\ell)$ affinely span  $\mathbb{R}^{d}$,   $(p,\ell)$ is an $\mathscr{A}$-regular point of $K_{n+k}$. So since  $(p,\ell)$ is an $\mathscr{A}$-regular point of both $G$ and $K_{n+k}$, it follows from Lemma \ref{hs1} that there exist neighborhoods $N_{(p,\ell)}$ and $N'_{(p,\ell)}$ of $(p,\ell)$ in $\mathscr{A}$ so that $\tilde{f}_{G}^{-1}\big(\tilde{f}_{G}((p,\ell))\big)\cap N_{(p,\ell)}$ is a manifold of dimension $\textrm{dim }\mathscr{A}-\textrm{rank }\big(d\tilde{f}_{G}((p,\ell))\big)$ and $\tilde{f}_{K_{n+k}}^{-1}\big(\tilde{f}_{K_{n+k}}((p,\ell))\big)\cap N'_{(p,\ell)}$ is a manifold of dimension $\textrm{dim }\mathscr{A}-\textrm{rank }\big(d\tilde{f}_{K_{n+k}}((p,\ell))\big)$. Since  $\tilde{f}_{K_{n+k}}^{-1}\big(\tilde{f}_{K_{n+k}}((p,\ell))\big)\cap N''_{(p,\ell)}$ is a submanifold of $\tilde{f}_{G}^{-1}\big(\tilde{f}_{G}((p,\ell))\big)\cap N''_{(p,\ell)}$, where $N''_{(p,\ell)}=N_{(p,\ell)}\cap N'_{(p,\ell)}$, it follows that \begin{displaymath}\textrm{rank }\big(d\tilde{f}_{G}((p,\ell))\big)\leq \textrm{rank }\big(d\tilde{f}_{K_{n+k}}((p,\ell))\big)\textrm{.}\end{displaymath} Clearly, $\textrm{rank }\big(d\tilde{f}_{G}((p,\ell))\big)= \textrm{rank }\big(d\tilde{f}_{K_{n+k}}((p,\ell))\big)$ if and only if there exists a neighborhood $N^*_{(p,\ell)}$ of $(p,\ell)$ in $\mathscr{A}$ such that $\tilde{f}_{K_{n+k}}^{-1}\big(\tilde{f}_{K_{n+k}}((p,\ell))\big)\cap N^*_{(p,\ell)}=\tilde{f}_{G}^{-1}\big(\tilde{f}_{G}((p,\ell))\big)\cap N^*_{(p,\ell)}$. Therefore, if $\textrm{rank }\big(d\tilde{f}_{G}((p,\ell))\big)= \textrm{rank }\big(d\tilde{f}_{K_{n+k}}((p,\ell))\big)$, then there does not exist an $\mathscr{A}$-preserving flex of $(G,p,\ell)$.\\\indent If $\textrm{rank }\big(d\tilde{f}_{G}((p,\ell))\big)< \textrm{rank }\big(d\tilde{f}_{K_{n+k}}((p,\ell))\big)$, then every neighborhood of $(p,\ell)$ in $\mathscr{A}$ contains elements of $\tilde{f}_{G}^{-1}\big(\tilde{f}_{G}((p,\ell))\big)\setminus\tilde{f}_{K_{n+k}}^{-1}\big(\tilde{f}_{K_{n+k}}((p,\ell))\big)$, and hence, by the same argument as in the proof of Proposition 1 in \cite{AR}, there exists an $\mathscr{A}$-preserving flex of $(G,p,\ell)$. This completes the proof. 
\end{proof}

\begin{defn}
\label{symmpreservinfflex}
\emph{An \emph{$\mathscr{A}$-preserving infinitesimal motion} of a point-hyperplane framework $(G,p,\ell)$ with $(p,\ell)\in \mathscr{A}$ is an element of the kernel of $d\tilde{f}_{G}((p,\ell))$. %(i.e., an infinitesimal motion of $(G,p,\ell)$ which is tangent to $\mathscr{A}$). 
A non-trivial $\mathscr{A}$-preserving infinitesimal motion is called an $\mathscr{A}$-preserving infinitesimal flex.}
\end{defn}

\begin{thm}
\label{flexthm2}
Let $(G,p,\ell)$ be a point-hyperplane framework with $(p,\ell)\in \mathscr{A}$, where the points and hyperplanes affinely span  $\mathbb{R}^{d}$. If $(p,\ell)$ is an $\mathscr{A}$-regular point of $G$ and there exists an $\mathscr{A}$-preserving infinitesimal flex of $(G,p,\ell)$, then there also exists an $\mathscr{A}$-preserving finite flex of $(G,p,\ell)$.
\end{thm}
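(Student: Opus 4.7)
The plan is to deduce the theorem directly from Theorem~\ref{flexthm1}(ii); it therefore suffices to establish the strict rank inequality $\textrm{rank}\,d\tilde{f}_G(p,\ell)<\textrm{rank}\,d\tilde{f}_{K_{n+k}}(p,\ell)$. Because $E(G)\subseteq E(K_{n+k})$, the rows of $d\tilde{f}_G(p,\ell)$ form a sub-collection of the rows of $d\tilde{f}_{K_{n+k}}(p,\ell)$ when both Jacobians are viewed as linear maps on the tangent space $T_{(p,\ell)}\mathscr{A}$, so that $\ker d\tilde{f}_{K_{n+k}}(p,\ell)\subseteq \ker d\tilde{f}_G(p,\ell)$ and the non-strict inequality $\textrm{rank}\,d\tilde{f}_G(p,\ell)\leq \textrm{rank}\,d\tilde{f}_{K_{n+k}}(p,\ell)$ is automatic. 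The remaining task is to promote this inclusion to a proper one.

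To do so, I would establish the identification
\begin{displaymath}
\ker d\tilde{f}_{K_{n+k}}(p,\ell)\;=\;T_{(p,\ell)}\mathscr{A}\,\cap\,\{\textrm{trivial infinitesimal motions of }(K_{n+k},p,\ell)\}\textrm{.}
\end{displaymath}
Since the points and hyperplanes of $(p,\ell)$ affinely span $\mathbb{R}^d$, the standard argument from Section~\ref{sec:pthy} shows that the kernel of the ambient rigidity map $df_{K_{n+k}}(p,\ell)$ is exactly the $\binom{d+1}{2}$-dimensional space of trivial infinitesimal motions (infinitesimal isometries, which automatically lie in $T_{(p,\ell)}\mathbb{W}_G$ because isometries preserve parallelism). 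Restricting to $T_{(p,\ell)}\mathscr{A}$ then yields the displayed identity. By hypothesis, there exists an $\mathscr{A}$-preserving infinitesimal flex of $(G,p,\ell)$, i.e.\ an element $\dot{x}\in \ker d\tilde{f}_G(p,\ell)$ which, by Definition~\ref{symmpreservinfflex}, is \emph{not} a trivial infinitesimal motion. Consequently $\dot{x}\notin \ker d\tilde{f}_{K_{n+k}}(p,\ell)$, the kernel inclusion is strict, and rank-nullity applied to the two Jacobians on $T_{(p,\ell)}\mathscr{A}$ produces the required strict rank inequality. Theorem~\ref{flexthm1}(ii) then supplies the $\mathscr{A}$-preserving finite flex.

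The main technical point will be verifying that $\ker df_{K_{n+k}}(p,\ell)$ really equals the full space of trivial infinitesimal motions for a spanning point-hyperplane configuration. This requires bookkeeping over the different row types of the rigidity matrix (point-point, point-hyperplane, hyperplane-hyperplane angle, hyperplane-hyperplane parallel, and normalisation), together with the observation that the parallelism constraints encoded in the definition of $\mathbb{W}_G$ are respected by every infinitesimal isometry; once this is in place, the remainder of the proof is the short linear-algebra argument sketched above.
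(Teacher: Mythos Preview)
Your proposal is correct and follows essentially the same route as the paper: identify $\ker d\tilde{f}_{K_{n+k}}(p,\ell)$ with the $\mathscr{A}$-preserving trivial infinitesimal motions (using the spanning hypothesis), observe that the given $\mathscr{A}$-preserving infinitesimal flex lies in $\ker d\tilde{f}_G(p,\ell)$ but not in $\ker d\tilde{f}_{K_{n+k}}(p,\ell)$, deduce the strict rank inequality via rank--nullity, and invoke Theorem~\ref{flexthm1}(ii). The paper's proof is slightly terser---it simply asserts the identification of $\ker df_{K_{n+k}}(p,\ell)$ with the trivial motions as a direct consequence of spanning---but the logic is the same.
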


\begin{proof}  Let $K_{n+k}$ be a complete decorated graph on $V(G)$, where the decoration is inherited from $G$. Since the points and hyperplanes of $(G,p,\ell)$ affinely span  $\mathbb{R}^{d}$, the kernel of $df_{K_{n+k}}((p,\ell))$ is the space of all trivial infinitesimal motions of $(G,p,\ell)$ and the kernel of $d\tilde{f}_{K_{n+k}}((p,\ell))$ is the space of all $\mathscr{A}$-preserving trivial infinitesimal motions of $(G,p,\ell)$. So, since  $(G,p,\ell)$ has an infinitesimal flex which is $\mathscr{A}$-preserving, we have $\textrm{nullity }\big(d\tilde{f}_{G}((p,\ell))\big)>\textrm{nullity }\big(d\tilde{f}_{K_{n+k}}((p,\ell))\big)$, and hence \begin{equation*}\label{rankinproof}\textrm{rank }\big(d\tilde{f}_{G}((p,\ell))\big)<\textrm{rank }\big(d\tilde{f}_{K_{n+k}}((p,\ell))\big)\textrm{.}\end{equation*} The result now follows from Theorem \ref{flexthm1}.
\end{proof}

\begin{cor}
If,  in addition to the conditions in Theorem~\ref{flexthm2},  $(G,p,\ell)$ has a self-stress and 
%all infinitesimal flexes of $G(q,m)$ are in $\mathscr{A}$  for all  $(q,m)$ sufficiently close to $(G,p,\ell)$ (is "
all infinitesimal motions of $(G,p,\ell)$ are  $\mathscr{A}$-preserving, then there is an edge $e \in E(G)$ which is locally redundant in $\mathbb{W}_G$.
\end{cor}
\begin{proof}
%If all infinitesimal flexes of $(G,p,\ell)$ are in $\mathscr{A}$ then by the definition of $\mathscr{A}$,  all infinitesimal flexes of $(G,q,m)$ are in $\mathscr{A}$ for all $(q,m) \in \mathscr{A}$.
%The existence of an edge $e$ which is locally redundant in $\mathscr{A}$ follows from the % Need to show that $e$ is locally redundant in $\mathbb{W}_G$.TO BE COMPLETED.
There is an edge $e$ and a neighborhood $N'_{(p,\ell)}$  in $\mathscr{A}$ such that $N'_{(p,\ell)} \cap \tilde{f}_G^{-1}(\tilde{f}_G(p,\ell))=N'_{(p,\ell)}\cap \tilde{f}_{G\setminus e}^{-1}(\tilde{f}_{G\setminus e}(p,\ell))$ by Lemma~\ref{hs1}.
%$f $f^{-1}_G (f_G(q,m))=f^{-1}_{G \setminus e}((f_{G\setminus e}(q,m))$
There is also a neighborhood $N''_{(p,\ell)}$ in $\mathbb{W}_G$ such that $N''_{(p,\ell)}\cap f_G^{-1}(f_G(p,\ell)))\subseteq \mathscr{A}$ because all infinitesimal motions of $(G,p,\ell)$ are   $\mathscr{A}$-preserving.  Every sub-neighborhood of $N_{(p,\ell)}$ of $N''_{(p,\ell)}$ has the property $N_{(p,\ell)}\cap f_G^{-1}(f_G(p,\ell))\subseteq \mathscr{A}$ so we can choose $N_{(p,\ell)}$ small enough that $N_{(p,\ell)} \cap f_G^{-1}(f_G(p,\ell))=N_{(p,\ell)}\cap f_{G\setminus e}^{-1}(f_{G\setminus e}(p,\ell))$.
%There is also a neighborhood $N'_{(p,\ell)}$ of $(p,\ell)$ in $\mathbb{W}_B$ such that $ f_G^{-1}(f_G(q, m))) \subseteq \mathscr{A}$ for all $(q,m) \in \N'$ because all infinitesimal flexes of $(G,q,m)$ are in  $\mathscr{A}$.  
%Hence $e$ is locally redundant  in $W_G$ because $N_{p,\ell} \cap N'_{p,\ell}$ is a neighborhood of $(p,\ell)$ in $W_G$.
%We can choose a sub-neighborhood $N\subset N''\subseteq \mathbb{W}_B$ such that $N\cap \mathscr{A} \subseteq N''$ which shows that $e$ is locally redundant in $\mathbb{W}_B$.
\end{proof}

\subsection{Finite motions of extrusion-symmetric frameworks}

We now use the results above to show that, under suitable regularity assumptions, the infinitesimal flexes of a $t$-fold extrusion-symmetric point-hyperplane framework $(G,p,\ell)$  detected with our symmetry-adapted counts in Section~\ref{sec:fowler-guest} extend to  finite flexes. 

To this end, we fix an irreducible representation $\rho_i$ of $\mathbb{Z}_2^t$ and consider the affine space $(p,\ell)+X^{(\rho_{i})}$, where $X^{(\rho_{i})}$ is the space of all vectors in $\mathbb{R}^{d|V|+(d+1)|V_{\mathcal{H}}|}$ that are $\rho_i$-symmetric. (Recall Definition~\ref{def:sy} for bar-joint frameworks. The definition is analogous for point-hyperplane frameworks, as discussed in Section~\ref{subsec:blockdecompthyper}.)

Recall from Corollary~\ref{cor:blockph} and the discussion in Section~\ref{pinh} that under suitable conditions/pinning, the point-hyperplane rigidity matrix of $(G,p,\ell)$ can be transformed into the block form
\begin{equation*}
\widetilde{R}(G,p,\ell)
=\left(\begin{array}{ccc}\widetilde{R}_{0}(G,p,\ell) & & \mathbf{0}\\ & \ddots & \\\mathbf{0} &  &
\widetilde{R}_{r-1}(G,p,\ell) \end{array}\right)\textrm{.}
\end{equation*}
If we let $\tilde{f}^{(i)}_{G}((p,\ell))= f_{G}|_{(p,\ell)+X^{(\rho_{i})}}$, then we clearly have $\textrm{rank }\widetilde{R}_{i}(G,p,\ell) = \textrm{rank } d\tilde{f}^{(i)}_{G}((p,\ell))$.

We define a point $(q,m)$ to be a \emph{regular point of $G$ in $(p,\ell)+X^{(\rho_{i})}$} if there exists a 
  neighborhood $N_{(q,m)}$ of $(q,m)$  in $(p,\ell)+X^{(\rho_{i})}$  so that $\textrm{rank } d\tilde{f}^{(i)}_{G}((q,m))= \textrm{max}\{\textrm{rank }d\tilde{f}^{(i)}_{G}((q',m'))|\,(q',m')\in  N_{(q,m)}\}$. Clearly,   $(q,m)$ is a regular point of the complete graph $K_{|V|}$ in $(p,\ell)+X^{(\rho_{i})}$   if the points and hyperplanes of $(G,q,m)$ affinely span  $\mathbb{R}^{d}$,  because in that case $(G,q,m)$ only has trivial infinitesimal motions.

We have the following corollary of Theorem~\ref{flexthm2}.

\begin{thm}
\label{flexthm2ex}
Let $(G,p,\ell)$ be a point-hyperplane framework with $t$-fold extrusion symmetry, where the points and hyperplanes affinely span  $\mathbb{R}^{d}$. If $(p,\ell)$ is a regular point of $G$ in $(p,\ell)+X^{(\rho_{i})}$ and there exists a $\rho_i$-symmeric  infinitesimal flex of $(G,p,\ell)$, then there also exists a finite flex of $(G,p,\ell)$.
\end{thm}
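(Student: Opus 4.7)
The plan is to obtain Theorem~\ref{flexthm2ex} as a direct corollary of Theorem~\ref{flexthm2} by taking the manifold $\mathscr{A}$ there to be the affine space
\[
\mathscr{A}:=(p,\ell)+X^{(\rho_i)}.
\]
First I would check that $\mathscr{A}$ really is a smooth submanifold of $\mathbb{W}_G$ on which the restricted edge map coincides with the map $\tilde f_G^{(i)}$ whose differential has $\widetilde{R}_i(G,p,\ell)$ as its matrix. Since $\mathbb{W}_G$ is a linear subspace and the external representation $P'_V$ preserves the parallel-class decomposition induced by the extrusion action, every element of $X^{(\rho_i)}$ lies in $\mathbb{W}_G$; hence $\mathscr{A}\subseteq\mathbb{W}_G$ and is an affine subspace of $\mathbb{W}_G$. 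By the definition of the block $\widetilde{R}_i(G,p,\ell)$, we then have $\tilde f_G|_{\mathscr{A}}=\tilde f_G^{(i)}$, so the assumption that $(p,\ell)$ is a regular point of $G$ in $(p,\ell)+X^{(\rho_i)}$ translates literally into the hypothesis that $(p,\ell)$ is an $\mathscr{A}$-regular point of $G$ in the sense of Definition~\ref{symregulardef}.

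Next, I would translate a $\rho_i$-symmetric infinitesimal flex of $(G,p,\ell)$ into an $\mathscr{A}$-preserving infinitesimal flex. By the hyperplane analogue of Definition~\ref{def:sy}, such a flex lies in $X^{(\rho_i)}$, which is precisely the tangent space to $\mathscr{A}$ at $(p,\ell)$. As a genuine infinitesimal motion it lies in $\ker R(G,p,\ell)$, and hence in $\ker d\tilde f_G^{(i)}((p,\ell))$, so it is an $\mathscr{A}$-preserving infinitesimal motion. It remains to check non-triviality in the sense of Definition~\ref{symmpreservinfflex}: the space of $\mathscr{A}$-preserving trivial infinitesimal motions is contained in the space $\mathcal{J}$ of all trivial infinitesimal motions of $(G,p,\ell)$, and a $\rho_i$-symmetric infinitesimal flex lies outside $\mathcal{J}$ by definition of \emph{flex}.

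With these identifications in place, Theorem~\ref{flexthm2} (using also that the points and hyperplanes affinely span $\mathbb{R}^d$) directly yields an $\mathscr{A}$-preserving finite flex of $(G,p,\ell)$, which is in particular a finite flex of $(G,p,\ell)$, as required.

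The step I expect to be the main obstacle is the verification that $X^{(\rho_i)}\subseteq \mathbb{W}_G$, or equivalently that every $\rho_i$-symmetric displacement respects the parallel-class structure of the decorated graph $G=H\square_{F_1,\ldots,F_t}K_2^{\square t}$. This requires unravelling the off-diagonal contribution $-\tau_\gamma(i)^T$ in the definition of $P'_{V_{\mathcal H}}$ and checking that it is compatible with the partition into parallel classes; for frameworks requiring the pinning procedure of Section~\ref{pinh}, one must also ensure that the manifold $\mathscr{A}$ is set up in the pinned configuration space so that the block-diagonalisation given by Corollary~\ref{cor:blockph} continues to apply.
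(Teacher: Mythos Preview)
Your proposal is correct and follows essentially the same approach as the paper: both set $\mathscr{A}=(p,\ell)+X^{(\rho_i)}$ and invoke the Asimov--Roth style results of Section~\ref{sec:finiteflex}. The only cosmetic difference is that the paper cites Theorem~\ref{flexthm1} directly via the rank inequality $\textrm{rank}\,\widetilde{R}_i(G,p,\ell)=\textrm{rank}\,d\tilde f_G^{(i)}((p,\ell))<\textrm{rank}\,d\tilde f_{K_{|V|}}^{(i)}((p,\ell))$, whereas you route through Theorem~\ref{flexthm2}; your additional remarks about verifying $X^{(\rho_i)}\subseteq\mathbb{W}_G$ and the pinning caveat are more explicit than what the paper spells out.
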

\begin{proof} Since $(G,p,\ell)$ has a $\rho_i$-symmetric infinitesimal flex, we have $$\textrm{rank } \widetilde{R}_{i}(G,p,\ell)=\textrm{rank }d\tilde{f}^{(i)}_{G}((p,\ell)) <\textrm{rank }  d\tilde{f}^{(i)}_{K_{|V|}}((p,\ell)).$$ 
The result now follows from Theorem~\ref{flexthm1}.
\end{proof} 

A  simple, but useful observation 
  is that for the trivial irreducible representation $\rho_0$ of $\mathbb{Z}_2^t$, which assigns $1$ to each element of $\mathbb{Z}_2^t$, the $t$-fold extrusion symmetry of a point-hyperplane framework is preserved by making a `linear push'  along a $\rho_0$-symmetric infinitesimal motion. In fact, by the definition of $X^{(\rho_{0})}$, if $(G,p,\ell)$ has $t$-fold extrusion symmetry, and $(q,m)$ is an element of  $X^{(\rho_{0})}$, then $(G,p+q,\ell+m)$  still has $t$-fold extrusion symmetry with the same extrusion directions as $(G,p,\ell)$ because it still satifies the conditions in Definition \ref{def:extrsymph}.
  Thus,    the block-decomposition of the point-hyperplane rigidity matrix is preserved by making a linear push  along an infinitesimal motion in $X^{(\rho_{0})}$. This is clearly not true in general for other irreducible representations of $\mathbb{Z}_2^t$.

It follows that we may define a regular point of $G$ in $(p,\ell)+X^{(\rho_{0})}$ to be a point $(q,m)$ with the property that there exists a   neighborhood $N_{(q,m)}$ of $(q,m)$  in $(p,\ell)+X^{(\rho_{0})}$  so that $\textrm{rank }  \widetilde{R}_{0}(G,q,m)= \textrm{max}\{\textrm{rank } \widetilde{R}_{0}(G,q',m')|\,(q',m')\in  N_{(q,m)}\}$. So for checking $(q,m)$ for regularity in $(p,\ell)+X^{(\rho_{0})}$, we may simply consider the block matrix $\widetilde{R}_{0}(G,q,m)$ and compare its rank to the rank of the block matrices $\widetilde{R}_{0}(G,q',m')$.

It is easy to check that  the configurations of the frameworks in Examples~\ref{ex:1}, \ref{ex:2} and \ref{ex:3} are  regular points of $G$ in $(p,\ell)+X^{(\rho_{0})}$ (for their respective  extrusion symmetry). Thus, we may conclude that the detected fully-symmetric infinitesimal flexes are in fact finite.  The configuration of the cube framework in Example~\ref{ex:4}, however, is not a regular point of $G$ in $(p,\ell)+X^{(\rho_{0})}$ for $1$-fold extrusion symmetry (recall the discussion of this example in Section~\ref{sec:countph}). Thus, further analysis (e.g. a direct geometric analysis of the structure) is needed to conclude that the detected infinitesimal flexes are in fact finite.

Finally, we return to the example in Figure~\ref{fig:ex3}(a) and show that the infinitesimal flex is in fact finite. 
For simplicity, we label the vertices of each of the triangles $H_i$ as $a_i,b_i,c_i$, where $i=1,2,3$. and we denote by $G$ the underlying graph of the framework shown in Figure ~\ref{fig:ex3}(a). As we have already seen, for $i=1,2,3$, each of the frameworks corresponding to the subgraphs $\overline{H}_i$ of $G$ induced by the vertices in $V(G) \setminus V(H_i)$ has a simple extrusion symmetry and a finite motion, and hence  a locally redundant constraint. We can choose three independent locally redundant constraints, such as the edges $\{c_1,c_2\}$, $\{c_2,c_3\}$ and $\{c_1,c_3\}$ (since each of these occurs in only one of the $\overline{H}_i$). The subgraph of $G$ induced by $V(G) \setminus\{c_1,c_2,c_3 \}$  remains and has an extrusion symmetry (where the triangle $b_1,b_2,b_3$ is an extrusion of the triangle $a_1,a_2,a_3$) and hence a remaining locally redundant constraint. So we have 4 independent locally redundant constraints overall. The simple scalar counting gives $2|V(G)|-|E(G)|-3=-3$ and hence after removing 4 locally redundant edges we have a net finite freedom count of 1 which must be  the simultaneous rotation of the three triangles described in Section~\ref{sec:fowler-guest1}.

\section{The numerical linear push algorithm} \label{sec:linpush}
The numerical linear push numerical algorithm for detecting finite flexes applies most easily to frameworks which have a single infinitesimal motion.  We therefore remove the trivial infinitesimal motions from a framework by restricting the coordinates of a minimal subset of vertices to have fixed values.  For example, for a bar-joint framework in dimension $d=2$ we could specify that vertex $v_1$ has $x$ and $y$ coordinates $0$ and the vertex $v_2$ has at least  $x$ coordinate $0$.

Let $P=(c_1,\dots,c_{d(d+1)/2})$ be a minimal set of coordinates such that (i) the set includes all the coordinates of at least one point vertex and (ii) the restriction %$\bar{f}$
of the measurement map to constant values for the coordinates $c_1,\dots, c_{d(d+1)/2}$ gives a rigidity matrix which has no trivial motions. Here,  a minimal set means that there is no proper subset of $P$ which satisfies these conditions.  We say that these frameworks are \emph{minimally pinned} with respect to the coordinates in $P$ and for minimally pinned frameworks \emph{we assume that the measurement maps $f$ and $\tilde{f}$ incorporate this restriction}.

Suppose that $(\dot{p},\dot{\ell})$ is the only infinitesimal flex in a minimally pinned point-hyperplane framework.  Theorem \ref{flexthm2} provides the basis for a numerical algorithm which gives a sufficient condition that this infinitesimal flex extends to a finite flex.  The algorithm determines when there is an affine (i.e.  linear) subspace  $\mathscr{A} \subset \mathbb{R}^{dn+(d+1)k}$ which satisfies the conditions of Theorem \ref{flexthm2}.

We say that a framework $(G,p,\ell)$ has a finite flex which is \emph{linearly detectable by pinning} if 

\begin{itemize}
\item[(1)] the kernel of its rigidity matrix is generated by trivial infinitesimal motions together with a single infinitesimal flex, i.e.  the vector space of infinitesimal motions of the framework has dimension $d(d+1)/2+1$, and

\item[(2)] there exists a minimal pinning of the framework and an affine subspace $\mathscr{A} \subset \mathbb{R}^{dn+(d+1)k}$ such that $(p,\ell)$ is an $\mathscr{A}$-regular point and the infinitesimal flex $(\tilde{\dot{p}},\tilde{\dot{\ell}})$ generates the kernel of $d\tilde{f}(p,\ell)$.  
\end{itemize}

Theorem \ref{flexthm2} shows that this flex is indeed finite.  We also say that the infinitesimal flex extends to a finite flex which is linearly detectable by pinning.

The linear push algorithm assumes that we can determine the rigidity matrix $R(G,q,m)$ at any point $(q,m) \in\mathbb{R}^{dn+(d+1)k}$ but does not require that we can construct frameworks which are equivalent to $(G,p,\ell)$.

Note that $d\tilde{f}(q,m)$ can be obtained by writing $df(q,m)$ in a coordinate basis which contains an orthogonal basis for $\mathscr{A}$ and defining the columns of $d\tilde{f}(q,m)$ to be the columns of $df(q,m)$ which correspond to this coordinate basis of $\mathscr{A}$.   We can check whether $(p,m)$ is an $\mathscr{A}$-regular point by selecting an orthogonal basis for $\mathscr{A}$,  selecting a random point $(q,m) \in \mathscr{A}$ using this basis and checking that rank$(d\tilde{f}(q,m))$=rank$(d\tilde{f}(p,\ell))$.  This is a valid test because almost all points in $\mathscr{A}$ are $\mathscr{A}$-regular,  there are $\mathscr{A}$-regular points in every neighbourhood of $(p,\ell)$, %are dense in $\mathscr{A}$ in eveery nei 
and rank$(d\tilde{f}(q,m))$ is maximal on $\mathscr{A}$-regular points.  We can check whether an infinitesimal flex is $\mathscr{A}$-preserving by checking whether it is tangent to $\mathscr{A}$.

The iterative algorithm computes a vector space $\mathscr{B}$ where  $\mathscr{A}=(p,\ell)+\mathscr{B}$ as follows.

\begin{algorithmic}
\State Input: A minimally pinned point-hyperplane framework $(G,p,\ell)$  which has a single infinitesimal flex $(\dot{p},\dot{\ell})$.
\State $v \gets (\dot{p},\dot{l}),\ \mathscr{B} \gets \langle v \rangle,\ \mathcal{A} \gets (p,\ell)+\mathcal{B},\ (q,m)\gets(p,\ell)$\;
\While{$\rank(df(q,m))\leq\rank(df(p,\ell))$ and $X(q,m) \not\subseteq \mathscr{B}$}
\State  $(q,m) \gets$ a random point in $\mathscr{A}$ (i.e. an $\mathscr{A}$-regular point) 
%\If{$\rank(df(q,m))>\rank(df(p,\ell))$}
%\State exit,  with the determination that $(\dot{p},\dot{\ell})$ does not extend to a linearly detectable flex. 
%\Else
%{\If {$X(q,m) \subseteq \mathscr{B}$ (where $X(q,m)$ is the  null space of $df(q,m)$)}
%\State exit: with the determination that the extension is linearly detectable in the affine space $\mathscr{A}$ and the framework has a locally redundant edge. 
%\Else{
\State  $v \gets$ the unit vector which generates $X(q,m)$ 
\State $\mathscr{B} \gets \mathscr{B} \cup \langle v \rangle$ 
\State $\mathscr{A} \gets (p,\ell)+\mathscr{B}$
%}
%\EndIf
%}
%\EndIf
%exit,  with the determination that $(\dot{p},\dot{\ell})$ does not extend to a linearly detectable flex. 
\EndWhile
\If{$\rank(df(q,m))>\rank(df(p,\ell))$}
\State $(\dot{p},\dot{\ell})$ does not extend to a linearly detectable finite flex. 
\Else
%{\If {$X(q,m) \subseteq \mathscr{B}$ (where $X(q,m)$ is the  null space of $df(q,m)$)}
\State $(\dot{p},\dot{\ell})$ extends to a linearly detectable finite flex in the affine space $\mathscr{A}$ and the framework has a locally redundant edge. 
\EndIf
\end{algorithmic}

%\textcolor{red}{I think the while loop needs a stopping condition. So how about: while $\rank(df(q,m))\leq \rank(df(p,\ell))$ and $X(q,m)$ is not contained in $\mathscr{B}$, choose random point for $(q,m)$ and assign to $v$ the unit vector.... Then do the two if statements.}

%Input: A minimally pinned point-hyperplane framework $(G,p,\ell)$ which has a single infinitesimal flex $(\dot{p},\dot{\ell})$.

%Initialise : 

%$v=(\dot{p},\dot{\ell})$,  $\mathscr{B}=\langle v\rangle$, $\mathscr{A}=(p,l)+\mathscr{B}$,  $(q,m)=(p,\ell)$

%Loop start: 
%Reset   
%$(q,m)$ to a random point in $\mathscr{A}$ (i.e. to an $\mathscr{A}$-regular point) 

%If rank$(df(q,m))>$rank$(df(p,\ell))$
%exit,  with the determination that $(\dot{p},\dot{\ell})$ does not extend to a linearly detectable flex. 

%Otherwise if $X(q,m) \subseteq \mathscr{B}$ (where $X(q,m)$ is the  null space of $df(q,m)$)
%exit: with the determination that the extension is linearly detectable in the affine space $\mathscr{A}$.

%Otherwise:  reset 
%$v$ to the unit vector which generates $X(q,m)$,  reset $\mathscr{B} \to \mathscr{B}  \cup \langle v\rangle$, reset $\mathscr{A}=(p,\ell)+\mathscr{B}$ and iterate to loop start.

The algorithm must terminate after at most $nd+(d+1)k$ steps because $\mathscr{A}  \subseteq \mathbb{R}^{nd+(d+1)k}$
and the dimension of $\mathscr{A}$ increases by at least one on each iteration.

\begin{lem} \label{lin-push}
Let $G$ be an isostatic point-hyperplane graph in $\mathbb{R}^d$ and let $(G,p,\ell)$ be a minimally pinned framework on $G$ whose rigidity matrix has a kernel with dimension one.  
The extension of the infinitesimal flex of $(G,p,\ell)$ to a finite flex is linearly detectable if and only if the linear push algorithm terminates with this determination.
\end{lem}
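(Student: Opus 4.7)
The plan is to establish the biconditional by analysing each exit behaviour of the algorithm in terms of the dimension of the full-space flex space $X(q,m)$. The key preliminary observation is that since $\dim X(p,\ell)=1$, upper semi-continuity of $\dim X$ gives $\dim X(q,m)\le 1$ whenever $(q,m)$ is sufficiently close to $(p,\ell)$; under this local interpretation of the algorithm's random sampling, the failure test $\mathrm{rank}(df(q,m))>\mathrm{rank}(df(p,\ell))$ becomes equivalent to $\dim X(q,m)=0$, while its absence is equivalent to $\dim X(q,m)=1$. This reduction is what will allow me to translate between the algorithm's full-space rank checks and the conditions involving $d\tilde f$ in the definition of linear detectability.

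With this reduction in hand, I will first prove that algorithm success implies linear detectability. Suppose the algorithm exits successfully on a final subspace $\mathscr{A}=(p,\ell)+\mathscr{B}$ with $\mathscr{A}$-regular witness $(q,m)\in\mathscr{A}$. Absence of failure at the preceding rank check gives $\dim X(q,m)=1$, and the exit condition $X(q,m)\subseteq\mathscr{B}$ then yields $\ker d\tilde f(q,m)=X(q,m)$ of dimension $1$. Since $(\dot p,\dot\ell)\in\mathscr{B}$ by initialisation and $X(p,\ell)=\langle(\dot p,\dot\ell)\rangle\subseteq\mathscr{B}$, the analogous computation at $(p,\ell)$ gives $\ker d\tilde f(p,\ell)=\langle(\dot p,\dot\ell)\rangle$, also of dimension $1$. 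Equating the ranks $\mathrm{rank}(d\tilde f(p,\ell))=\dim\mathscr{B}-1=\mathrm{rank}(d\tilde f(q,m))$ and invoking the $\mathscr{A}$-regularity of the witness forces $(p,\ell)$ to be $\mathscr{A}$-regular, so both defining conditions of linear detectability by pinning are met.

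For the converse I will fix a witness subspace $\mathscr{A}^*=(p,\ell)+\mathscr{B}^*$ for linear detectability and prove by induction on iterations that the algorithm maintains the invariant $\mathscr{B}\subseteq\mathscr{B}^*$, never exits with failure, and terminates with success after at most $\dim\mathscr{B}^*$ iterations. The base case follows from $(\dot p,\dot\ell)\in\mathscr{B}^*$. For the inductive step, any random $(q,m)\in\mathscr{A}\subseteq\mathscr{A}^*$ sampled close to $(p,\ell)$ lies in a neighbourhood where $\mathscr{A}^*$-regularity of $(p,\ell)$ makes $\mathrm{rank}(d\tilde f^*)$ locally constant, forcing $\dim\bigl(X(q,m)\cap\mathscr{B}^*\bigr)=\dim\ker d\tilde f^*(q,m)=1$; combined with the bound $\dim X(q,m)\le 1$, this yields $\dim X(q,m)=1$ and $X(q,m)\subseteq\mathscr{B}^*$. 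Thus the failure test never triggers, and each augmentation of $\mathscr{B}$ by the generator of $X(q,m)$ keeps $\mathscr{B}$ inside $\mathscr{B}^*$ while strictly increasing its dimension, forcing termination with success.

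The hard part will be bookkeeping the interplay between the full-space rank $\mathrm{rank}(df)$ appearing in the algorithm's failure test and the restricted rank $\mathrm{rank}(d\tilde f)$ appearing in the definition of linear detectability; the reduction to $\dim X(q,m)$ sketched above is what bridges this gap, leveraging the standing hypothesis that $X(p,\ell)$ is one-dimensional. A secondary technical issue is that the inductive argument requires $(q,m)$ to be sampled in a neighbourhood of $(p,\ell)$ on which the relevant semi-continuity and local-regularity facts apply; this matches the algorithm's intended local interpretation of a "random point", and can be made rigorous by restricting the supporting distribution to a sufficiently small neighbourhood of $(p,\ell)$ in $\mathscr{A}$, without affecting the almost-sure selection of an $\mathscr{A}$-regular point.
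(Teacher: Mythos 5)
Your proof is correct and follows essentially the same route as the paper's: the forward direction extracts $\mathscr{A}$-regularity of $(p,\ell)$ from the termination conditions, and the converse is the same induction showing $\mathscr{B}$ remains inside a witness subspace so that the algorithm can only exit successfully. Your version is in fact slightly tidier in two respects — you do not need the paper's choice of a containment-minimal witness $\mathscr{A}_{min}$, and you explicitly rule out the failure exit via the semicontinuity bound $\dim X(q,m)\le 1$, a step the paper leaves implicit.
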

\begin{proof}
Suppose that the linear push algorithm terminates with the determination that the extension is linearly detectable.  The point $(q,m)$ in the algorithm at termination is $\mathscr{A}$-regular because $X(q,m) \subseteq \mathscr{B}$ and rank$(df(q,m))$ is maximal.
%ts projection on each of the axes of an orthogonal basis of $\mathscr{A}$ is $\mathscr{a}$-regular.  
Since rank$(df(p,l)) \geq$ rank$(df(q,m))$ by the termination condition and $(q,m)$ is $\mathscr{A}$-regular we have rank$(df(p,l))=$ rank$(df(q,m))$.  The infinitesimal flex $v$ is tangent to $\mathscr{A}$ by construction so it extends to a finite flex in $\mathscr{A}$.

Conversely, suppose that the extension of the infinitesimal flex $(\dot{p},\dot{\ell})$  to a finite flex is linearly detectable in an affine space $\mathscr{A}_{min}=(p,\ell)+\mathscr{B}_{min}$ which is minimal with respect to containment.  Let $\mathscr{B}_s$, $(q_s,m_s)$ and  $v_s$ be $\mathscr{B}$, $(q,m)$ and $v$ respectively at the start of iteration step $s$.   This means that $\mathscr{B}_1=\langle(\dot{p}_1,\dot{\ell}_1)\rangle$, $(q_1,m_1)=(p,\ell)$, $v_1=(\dot{p},\dot{\ell})$ and $\mathscr{B}_s=\langle(\dot{p},\dot{\ell}), v_1,\dots,v_{s-1}\rangle$.
We will show that $\mathscr{B}_s \subseteq \mathscr{B}_{min}$ after each iteration and hence the linear push algorithm must terminate with the determination that $(\dot{p},\dot{\ell})$ is linearly detectable.  Clearly $\mathscr{B}_1 \subseteq \mathscr{B}_{min}$ because $(\dot{p},\dot{ \ell}) \in \mathscr{B}_{min}$. Assume for induction that $\mathscr{B}_s \subseteq \mathscr{B}_{min}$.  This implies that $\mathscr{B}_{s+1} \subseteq \mathscr{B}_{min}$ because otherwise $v_s \not\in \mathscr{B}_{min}$ and $\textrm{rank}(d\tilde{f}(q_{s+1},m_{s+1})) >\textrm{rank}(d\tilde{f}(p,\ell))$ (where $\tilde{f}$ is the restriction of $f$ to $\mathscr{A}_{min}$) which is a contradiction since $(q_{s+1},m_{s+1})$ is an $\mathscr{A}_s$-regular point and $\mathscr{A}_s \subseteq \mathscr{A}_{min}$ by the induction hypothesis.
\end{proof}

The linear push algorithm gives a sufficient condition that a single infinitesimal flex in a minimally pinned framework extends to a finite flex when a specific set of vertices are chosen as the pinning vertices.  Many of the simple  frameworks on isostatic  graphs which are not rigid do indeed have a set of vertices such that the infinitesimal flex with these vertices pinned extends to a linearly detectable finite flex.  However it may be that not all choices for the minimal set of pinning vertices lead to a linearly detectable finite flex.

For example, the $2$-dimensional bar-joint framework $(K_{3,3},p)$ on the complete  bipartite graph $K_{3,3}$ in which the points of the vertices in one partite set lie on a line and the points of the vertices in the other partite set lie on an orthogonal line has a 4-dimensional space of infinitesimal motions,  all of which extend to finite flexes \cite{bolker}.  However numerical calculation of the rigidity matrix shows that the extension to a finite flex is linearly detectable only if a pair of non-adjacent vertices is chosen as the pinning vertices.

The situation for a framework with extrusion symmetry is more favourable however.  As shown in Lemma~\ref{affine-invariance-hyperplane}, a point-hyperplane framework with $t$-fold extrusion symmetry retains the $t$-fold extrusion symmetry under a general affine transformation and hence has  a linearly dectectable finite flex whichever vertices are used to pin it. 

\begin{lem} \label{lin-push-any-pin}
Let $G=H\square K_2^{\square t}$  and let $(G,p,\ell)$ be a point-hyperplane framework with $t$-fold extrusion symmetry.  %Let  $P$ be a minimal pinning set for $(H,p)$.   
If a minimal pinning $P$ of $(G,p,\ell)$ has a single infinitesimal flex $(\dot{p},\dot{\ell})$,  
 then this flex extends to a linearly detectable finite flex.

\end{lem}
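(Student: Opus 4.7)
The strategy is to identify an affine subspace $\mathscr{A}$ of the pinning-compatible configuration space on which the linear push algorithm stays and the rigidity matrix has locally constant rank at $(p,\ell)$; Lemma~\ref{lin-push} then forces termination with a linear-detectability determination. The key observation is that the conditions of Definition~\ref{def:extrsymph} are linear in the configuration $(p,\ell)$ once the extrusion directions $\tau_1,\ldots,\tau_t$ are held fixed. Hence the set of $t$-fold extrusion-symmetric configurations with these specific directions,
\[
\mathscr{E}(\tau_1,\ldots,\tau_t)=\{(q,m):(G,q,m)\text{ has extrusion symmetry with directions }\tau_1,\ldots,\tau_t\},
\]
forms an affine subspace of the full configuration space.

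First I would observe that the hypothesis of exactly one infinitesimal flex after minimal pinning combined with the general fact that every $t$-fold extrusion-symmetric framework admits a fully-symmetric ($\rho_0$-symmetric) infinitesimal flex (see the discussion following Corollary~\ref{cor:blockph}) forces $(\dot p,\dot\ell)$ to coincide, modulo trivial motions, with this $\rho_0$-symmetric flex. By Theorem~\ref{flexthm2ex} it extends to a smooth finite flex path $(p_s,\ell_s)$ lying entirely in $\mathscr{E}(\tau_1,\ldots,\tau_t)$, so the tangent direction $(\dot p,\dot\ell)$ lies in the direction space of $\mathscr{E}$.

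Next I would take $\mathscr{A}$ to be the intersection of $\mathscr{E}(\tau_1,\ldots,\tau_t)$ with the affine subspace determined by the pinning $P$. Every configuration in $\mathscr{A}$ carries a $\rho_0$-symmetric flex by construction, so its (pinned) rigidity matrix has rank bounded above by the value at $(p,\ell)$; this upper bound is realised at $(p,\ell)$ by hypothesis, and by lower semicontinuity of rank (rank can only jump up on a small perturbation, never down from its maximum on $\mathscr{A}$), rank is in fact constant on a neighbourhood of $(p,\ell)$ in $\mathscr{A}$. Inductively, each affine subspace $\mathscr{A}_s=(p,\ell)+\mathscr{B}_s$ produced by the algorithm lies inside $\mathscr{A}$: the added null-space vector $v_s$ at iteration $s$ is the $\rho_0$-symmetric flex of the extrusion-symmetric framework $(G,q_s,m_s)\in\mathscr{A}$, which is tangent to $\mathscr{E}$ and hence lies in the direction space of $\mathscr{A}$.

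Since $\mathscr{A}$ is finite-dimensional and $\dim\mathscr{B}_s$ strictly increases at each iteration, the algorithm terminates in at most $\dim\mathscr{A}$ steps; because rank is bounded above on $\mathscr{A}$ by its value at $(p,\ell)$, the first exit condition of the algorithm (rank increase, signalling failure) is never triggered, and termination must occur at the second exit condition, giving the required linear-detectability determination. The main obstacle is rigorously verifying that $\mathscr{E}(\tau_1,\ldots,\tau_t)$ is genuinely an affine subspace: this requires checking that each of the four conditions (i)--(iv) in Definition~\ref{def:extrsymph} reduces, once $\tau_1,\ldots,\tau_t$ are fixed, to a linear equality on the point and hyperplane coordinates, and then separately confirming that the minimal pinning $P$ cuts $\mathscr{E}(\tau_1,\ldots,\tau_t)$ transversely so that the intersection $\mathscr{A}$ remains well-behaved near $(p,\ell)$.
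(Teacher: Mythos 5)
There is a genuine gap, and it is exactly the point the paper's own proof is built to overcome. You correctly note that the unique pinned flex $(\dot p,\dot\ell)$ agrees with the fully-symmetric flex only \emph{modulo trivial motions}, but you then quietly drop this caveat when you assert that ``the tangent direction $(\dot p,\dot\ell)$ lies in the direction space of $\mathscr{E}(\tau_1,\ldots,\tau_t)$.'' In fact $(\dot p,\dot\ell)=(\dot p_s+\dot p_r,\dot\ell_s+\dot\ell_r)$, where $(\dot p_s,\dot\ell_s)$ is the $\rho_0$-symmetric flex and $(\dot p_r,\dot\ell_r)$ is an infinitesimal rigid motion (generically containing a rotational part) chosen so that the sum vanishes on the pinned coordinates $P$. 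While $(\dot p_s,\dot\ell_s)$ is tangent to $\mathscr{E}(\tau_1,\ldots,\tau_t)$, an infinitesimal rotation is not: rotating the framework changes the extrusion directions, so displacing along $(\dot p,\dot\ell)$ leaves the fixed-direction symmetric space. Consequently your inductive claim that every $\mathscr{A}_s=(p,\ell)+\mathscr{B}_s$ lies inside $\mathscr{A}=\mathscr{E}\cap\{\text{pinned}\}$ already fails at $s=1$, since $v_1=(\dot p,\dot\ell)$ is not in the direction space of $\mathscr{E}$; worse, the direction space of your $\mathscr{A}$ consists of fully-symmetric displacements vanishing on $P$, and the fully-symmetric flex does not vanish on $P$ in general, so $\mathscr{A}$ need not contain any useful flex direction at all.

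The paper repairs this with Lemma~\ref{affine-invariance-hyperplane}: extrusion symmetry is preserved under an arbitrary affine transformation $A_{A,v}$, with the extrusion directions becoming $A\tau_1,\ldots,A\tau_t$. Since $(G,p+\alpha\dot p,\ell+\alpha\dot\ell)$ is an affine transform of the extrusion-symmetric framework $(G,p+\alpha\dot p_s,\ell+\alpha\dot\ell_s)$, the pushed frameworks remain $t$-fold extrusion symmetric (with rotated directions), and one works with the transported spaces $\mathscr{A}_\alpha=A_\alpha\mathscr{A}_0$ rather than a single fixed $\mathscr{E}(\tau_1,\ldots,\tau_t)$. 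Your argument as written has no substitute for this step, so the termination analysis of the linear push algorithm does not go through.
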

\begin{proof}
$(G,p,\ell)$ has an infinitesimal flex $(\dot{p_s},\dot{\ell_s})$
which is $\rho_0$-symmetric (i.e.  fully-symmetric) by the remark following Example~\ref{ex:2}.  Let $\mathscr{A}_0=(p,\ell)+X^{(\rho_{0})}$ where $X^{(\rho_{0})} \in \mathbb{R}^{nd+k(d+1)}$ is the space of all vectors which are $\rho_0$-symmetric. The point $(p,\ell)$ is $\mathscr{A}_0$-regular and the infinitesimal flex $(\dot{p_s},\dot{\ell_s})$ is $\mathscr{A}_0$-preserving.

 Since $P$ is a minimal pinning set of coordinates for $(G,p,\ell)$ we claim that there is an infinitesimal rigid motion $(\dot{p}_r,\dot{\ell}_r)$ such that  $(\dot{p}_s+\dot{p}_r,\dot{\ell}_s+\dot{\ell}_r)$ is zero on all the coordinates in $P$ .  Let $u$ be a point vertex all of whose coordinates are in $P$.  Then we can choose a translation such that $\dot{p}_r(u)=-\dot{p}_s(u)$  and infinitesimal rotations round $p(u)$ so that the remaining coordinates satisfy $\dot{c}^s_i =\dot{c}^r_i$.  This shows that $(\dot{p},\dot{\ell})=(\dot{p}_s+\dot{p}_r,\dot{\ell}_s+\dot{\ell}_r)$ since this is the only  infinitesimal flex of the framework $(G,p,\ell)$ which is pinned on $P$.

Recall that if $(\dot{p_r},\dot{\ell_r})$ is an infinitesimal rigid motion of $(G,p,\ell)$ then $(G,p+\alpha\dot{p_r}, \ell+\alpha \dot{\ell_r})=(G,A_{\alpha}p,A_{\alpha}\ell)$,  where $A_{\alpha}$ is an affine transformation,  for all $\alpha \in \mathbb{R}$.  The frameworks $(G,p+\alpha\dot{p_s},\ell+\alpha\dot{\ell_s})$ have $t$-fold extrusion symmetry for all $\alpha \in \mathbb{R}$ because $(\dot{p_s},\dot{\ell_s})$ is fully-symmetric with respect to the extrusion directions $\tau$.  This implies that the frameworks $(G, p+\alpha\dot{p},\ell+\alpha\dot{\ell })$ all have $t$-fold extrusion symmetry 
by Lemma \ref{affine-invariance-hyperplane} because $(G, p+\alpha\dot{p},\ell+\alpha\dot{\ell })$ is an affine transform of $(G, p+\alpha\dot{p_s},\ell+\alpha\dot{\ell_s})$.

Let $\mathscr{A}_{\alpha}=A_{\alpha}\mathscr{A}_0$.  All frameworks 
$(G,p+\alpha\dot{p}, \ell+\alpha \dot{\ell})$ for $\alpha \in \mathbb{R}$ have an infinitesimal flex and hence $(p,\ell)$ is an $\mathscr{A}_{\alpha}$-regular point. $(\dot{p},\dot{\ell})$ is tangent to $\mathscr{A}_{\alpha}$ by construction which shows that $(\dot{p},\dot{\ell})$ extends to a linearly detectable flex by Theorem \ref{flexthm2}.
\end{proof}

The linear push algorithm is easily extended to the case when $(G,p,\ell)$ has only one state of self-stress but has $m >1$ infinitesimal flexes (so that the graph $G$ is generically under-constrained and has fewer than $d|V|-d(d+1)/2$ edges).  In this case if all %$dN-d(d+1)/2-E+1$ 
infinitesimal flexes of $(G,p,\ell)$ extend to finite flexes then each edge with a non-zero stress coefficient is a locally redundant edge. The numerical algorithm is extended by simply initialising $\mathscr{B}$ to the vector space generated by all the infinitesimal flexes of the pinned framework $(G,p,\ell)$.

The algorithm can also be extended to the case that $(G,p,\ell)$ has more than one state of self-stress by deleting stressed edges from the framework until there is only one state of self-stress  remaining. If all of the infinitesimal flexes of this reduced framework extend to finite flexes then the remaining state of self-stress identifies the edges which are locally redundant in the reduced framework. These edges are also locally redundant in the original framework since local redundance in a sub-framework clearly implies local redundance in the containing framework.  A more detailed analysis of the extended linear push algorithm is the subject of continuing research.

%The definition of a linearly detectable flex in this section is quite restrictive since it assumes a minimally pinned framework which has a single infinitesimal flex. For many practical applications (such as geometric constraint solving in CAD) it would be desirable to remove both of these restrictions. We are investigating enhancements to the numerical linear push algorithm which will achieve this.

\section*{Acknowledgements}  

We would like to thank Patrick Fowler and  Simon Guest for helpful discussions during research meetings in Cambridge and Lancaster in 2019.

\bibliographystyle{abbrv}
\def\lfhook#1{\setbox0=\hbox{#1}{\ooalign{\hidewidth
  \lower1.5ex\hbox{'}\hidewidth\crcr\unhbox0}}}

\end{document}